

%

\documentclass[reqno,11pt]{amsart}
\usepackage[dvipsnames,usenames]{color} 

\usepackage{enumitem}     

 
\usepackage{pifont}
\usepackage{upgreek}
\usepackage{calligra}
\usepackage{hyperref,caption}
\usepackage{ifpdf} 
\usepackage[mathscr]{euscript} 
\usepackage{cite} 
\usepackage{graphicx}  
\usepackage[all]{xy} \xyoption{arc} \xyoption{color}

\usepackage{amsmath} 
\usepackage{amsthm} 
\usepackage{amsfonts}  
\usepackage{amssymb} 

\usepackage{verbatim}

\oddsidemargin  .25 in
\evensidemargin .25 in
\textwidth 6.0in

\setcounter{secnumdepth}{3}
 \setcounter{tocdepth}{1}

\numberwithin{equation}{section} 

\newtheorem{corollary}{\sc Corollary}[section]
\newtheorem{theorem}{\sc Theorem}[section]
\newtheorem{lemma}{\sc Lemma}[section]
\newtheorem{proposition}{\sc Proposition}[section]
\newtheorem{remark}{\sc Remark}
\newtheorem{definition}{\sc Definition}[section]

\DeclareMathAlphabet{\mathcalligra}{T1}{calligra}{m}{n}
\DeclareFontShape{T1}{calligra}{m}{n}{<->s*[2.2]callig15}{}


\newcommand{\norm}{\mathscr{S}}




%
\newcommand{\D}{\nabla}
\newcommand{\ITIMESMD}{\mathbf{D}}
\newcommand{\MD}{D}
\newcommand{\AngMD}{\bar{\partial}}



\newcommand{\Enth}{S}

\newcommand{\Cof}{a}

\newcommand{\Lagdiff}{{^{(\upeta)}\nabla}}

\newcommand{\Lagdiv}{{^{(\upeta)} \mkern-2mu \mbox{\upshape div}}}

\newcommand{\Lagvort}{{^{(\upeta)} \mkern-2mu \mbox{\upshape vort}}}


\newcommand{\Flatdiv}{{^{(3)} \mkern-2mu \mbox{\upshape div}}}
\newcommand{\Flatvort}{{^{(3)} \mkern-2mu \mbox{\upshape vort}}}

\newcommand{\LM}{L^2(\mathfrak M_\tau)}
\newcommand{\LI}{L^\infty(\mathfrak M_\tau)}

\def\be{\begin{equation}}
\def\ee{\end{equation}}
\def\g{\partial}
\def\t{\bar{\partial}}
\def\A{\mathscr{A}}

\newcommand{\R}{\mathbb R}


%

\begin{document}
\title[Relativistic Euler with moving vacuum boundary]{A priori estimates for solutions to the relativistic Euler equations with a moving vacuum boundary}
\author{Mahir Had\v zi\'c, Steve Shkoller, Jared Speck}

\address{Department of Mathematics, King's College London, London, UK}
\address{Department of Mathematics, University of California at Davis, Davis, USA}
\address{Department of Mathematics, Massachusetts Institute of Technology, Cambridge, USA}


\begin{abstract}
We study the relativistic Euler equations on the Minkowski spacetime background.
We make assumptions on the equation of state and the initial data that are
relativistic analogs of the well-known physical vacuum boundary condition,
which has played an important role in prior work on
the non-relativistic compressible Euler equations.
Our main result is the derivation, 
relative to Lagrangian (also known as co-moving) coordinates,
of local-in-time a priori estimates for the 
solution. The solution features a fluid-vacuum boundary,
transported by the fluid four-velocity,
along which the hyperbolicity of the equations degenerates.
In this context, the relativistic Euler equations
are equivalent to a degenerate quasilinear hyperbolic wave-map-like system
that cannot be treated using standard energy methods.
\end{abstract}
  \setcounter{tocdepth}{2}

\maketitle


\section{Introduction}

The relativistic Euler equations
in Minkowski spacetime $(\mathbb{R}^{1+3},g)$
(where $g$ denotes the Minkowski metric)
are the fundamental equations of motion 
for (special) relativistic fluids.
In this article, we derive local-in-time a priori estimates 
in weighted Sobolev spaces for solutions featuring a dynamic fluid-vacuum boundary
such that the fluid density vanishes at a specific rate, described below.
Solutions featuring our specific rate of vanishing are said to have
a ``physical-vacuum'' boundary.
The vanishing rate that we consider permits the fluid to accelerate along the fluid-vacuum boundary.
Relative to suitable Lagrangian coordinates, described below, 
the equations of motion take the form of a
\emph{degenerate quasilinear hyperbolic wave-map-like system\footnote{In particular, the map is 
constrained as a consequence of the four-velocity normalization condition \eqref{E:normalizationeulerian}.}}
that cannot be treated using standard energy methods such as those afforded by
the symmetric hyperbolic framework. Thus, 
to prove our result, we develop a relativistic extensions of the method used in
~\cite{CoLiSh2010,CoutandShkoller2011,CoutandShkoller2012} in the
non-relativistic problem. In proving our result, 
we exploit the full nonlinear structure of the Euler equations
relative to a Lagrangian coordinate system.
With respect to the non-relativistic counterpart~\cite{CoLiSh2010,CoutandShkoller2012} 
of our result, there are several new essential difficulties that we must overcome. 
First, our Lagrangian coordinate time slices are hypersurfaces of constant proper time,
which are not generally slices of constant inertial Minkowski time.
Second, unlike in the non-relativistic case,
the vorticity of the fluid acceleration is equal to \emph{non-vanishing} source terms;
in order control these source terms, we must exploit the full nonlinear structure of the equations.   Third,
we must extract three-dimensional spatial regularity from the four-dimensional spacetime operators 
that naturally arise in energy estimates and elliptic-type estimates.

We will first state the relativistic Euler equations in Eulerian coordinates, 
but our analysis will be founded upon
the Lagrangian formulation.

A different approach to deriving 
a priori estimates for the relativistic Euler equations has recently been established in \cite{JangLeflochMasmoudi2015},  
using a generalization of the approach in \cite{JangMasmoudi2009, JangMasmoudi2015} for the non-relativistic problem.

\subsection{The Eulerian formulation}
\label{SS:EULERIANFORM} Throughout the paper, we set the speed of light equal to $1$.
In Eulerian variables and relative to arbitrary coordinates on $(\mathbb{R}^{1+3},g)$,  
the isentropic relativistic Euler equations are 
\begin{subequations} \label{E:EULER}
\begin{align}
	\D_{\alpha} (n u^{\alpha}) 
	& = 0,
		\label{E:EULERCONTINUITY} \\
	(\rho + p)u^{\alpha} \D_{\alpha} u^{\mu}
		+ (u^{\mu} u^{\alpha} + g^{\mu \alpha}) \D_{\alpha} p 
		& = 0\,,
		\label{E:REVELOCITY}
\end{align}
\end{subequations}
for $\mu = 0,1,2,3$, and where $\D$ denotes the (flat) Levi-Civita connection associated with $g$.
In standard rectangular coordinates $\lbrace x^{\mu} \rbrace_{\mu = 0,1,2,3}$ on $\mathbb{R}^{1+3}$,
in which $g = \mbox{\upshape diag}(-1,1,1,1)$,
the operator $\D_{\alpha}$ coincides with the rectangular partial derivative operator
$\frac{\partial}{\partial x^{\alpha}}$;
moreover, $n$ denotes the \emph{fluid proper number density},
$\rho$ denotes the \emph{proper energy density}, and $p$ denotes the \emph{pressure}.
Physically relevant solutions require that
$n \geq 0$, $\rho \geq 0$, and $p \geq 0$.
The \emph{four-velocity} $u$ is a timelike future-directed\footnote{Relative to the rectangular coordinates,
the future-directed property of $u$ is equivalent to $u^0 > 0$.} 
vectorfield normalized by 
\be\label{E:normalizationeulerian}
g_{\alpha \beta} u^{\beta} u^{\beta} = - 1.
\ee
Condition \eqref{E:normalizationeulerian} 
can be viewed as a constraint on the initial data that
is preserved by the nonlinear flow.
We refer the reader to  \cite{Sy1937} and \cite{dC2007} for a detailed discussion of the relativistic Euler equations.  
We shall employ the Einstein summation convention,
in which pairs of raised and lowered indices are summed; we give a more detailed description of this notation below.

The equations (\ref{E:EULER}) are relativistic analogs 
of the conservation of mass and momentum from the non-relativistic setting.
We note that there are more unknown quantities
than equations present. Thus, to close the system, 
we assume a \emph{barotropic equation of state} 
\begin{align}\label{E:EQUATIONOFSTATE}
p & = p(\rho).
\end{align}
The precise function $p(\rho)$ will be described below 
in Sect.~\ref{SS:DATAANDEOS}.

In our analysis, we use the following constitutive relation 
 afforded by the laws of thermodynamics:
\begin{align}\label{E:THERMO}
s
& = \frac{d \rho}{d n}\,,
\end{align} 
where
\begin{align} 
s & := \frac{\rho + p}{n}
\label{E:ENTHALPY}
\end{align}
is the \emph{enthalpy per particle}.

It is a standard fact that equation
\eqref{E:REVELOCITY} can be replaced with the equivalent equation
 \begin{align}\label{E:EULERIANENTHALPYEVOLUTION}
u^{\alpha} \D_{\alpha}(s u_{\mu}) + \D_{\mu} s = 0\,,
\end{align}
for $\mu =0,1,2,3$; see, for example, \cite{Sy1937,dC2007}.
The equivalence of \eqref{E:REVELOCITY} and  \eqref{E:EULERIANENTHALPYEVOLUTION}  follows from the identity
$u_{\alpha} \D_{\beta} u^{\alpha} = 0 $, which is a simple consequence of \eqref{E:normalizationeulerian}.

\begin{remark}
The structure of equation \eqref{E:EULERIANENTHALPYEVOLUTION}
is of fundamental importance for our analysis of the regularity of the 
the fluid vorticity; see Remark~\ref{R:ALLTHREEFORMS} below.
\end{remark}

\subsection{Initial data, physical vacuum, and the equation of state}
\label{SS:DATAANDEOS}

\subsubsection{The vacuum boundary}
We consider the following 
initial data for the relativistic Euler equations:
\begin{align} \label{E:INITIALDATAEULERIAN}
	(\mathring{n},\mathring{v}^1,\mathring{v}^2,\mathring{v}^3)
	:=
	(n,u^1,u^2,u^3)|_{t=0}\,,
\end{align}
 where throughout we use $t$ to denote the Minkowski time coordinate $x^0$.
Given a barotropic equation of state, 
the data \eqref{E:INITIALDATAEULERIAN} uniquely determines all fluid variables $p$, $\rho$, etc.
at the initial time $t=0$. We study solutions of the Euler equations that propagate the dynamic vacuum
boundary $\g \mathscr{P}$, 
where $\mathscr{P}$ is the closure of the subset of $\mathbb{R}^{1+3}$
where $n$ is positive:
\begin{align} \label{E:POSITIVITYSET}
\mathscr{P} := \mbox{cl} \lbrace q \in \mathbb{R}^{1+3} | \ n(q) > 0 \rbrace.
\end{align}
Points o the vacuum boundary $\partial\mathscr P$ move with the fluid velocity $u$.  
That is, points $q$ in the $3$-surface $\partial\mathscr P$
have the velocity $u(q) \in T_q \partial\mathscr P$. In particular,
\begin{align} \label{E:FREEBOUNDARYCONDITION}
\text{ the vectorfield }  \ u^{\alpha} \frac{\partial}{\partial x^{\alpha}} \ \text{ is tangent to } \ \partial\mathscr P.
\end{align}
For our analysis, it will be convenient to
consider a particular foliation of $\mathscr{P} \subset \mathbb{R}^{1+3}$
by spacelike hypersurfaces.
Specifically, our analysis relies on the following foliation:
\begin{align} \label{E:POSITIVITYFOLIATED}
	\mathscr{P} 
	& = \cup_{\tau \in I} \widetilde{\mathfrak{M}}_{\tau},
\end{align}
where the manifolds 
$\widetilde{\mathfrak{M}}_{\tau}$ are defined in Remark~\ref{rem_Mslice}.

\subsubsection{The  material manifold $\mathfrak{M}$}
\label{SS:materialmanifold}

The $3-$d \emph{material manifold}
$\mathfrak{M}$ parameterizes the collection of fluid particles at 
the initial time $t=0$. More precisely, we define $\mathfrak{M}$ to be
\begin{align} \label{E:MATMAN}
	\mathfrak{M} \simeq \mbox{cl} \lbrace (0,x^1,x^2,x^3) \in \mathbb{R}^{1+3} \ | \ \mathring{n} (x^1,x^2,x^3)> 0 \rbrace \,.
\end{align}

\subsubsection{The physical vacuum condition}

The speed of sound $c_s$ is defined as
\begin{align} \label{E:SOUNDSPEED}
	c_s := \sqrt{\frac{dp}{d \rho}} \,.
\end{align}
Our admissible equations of state, by definition, must have the following properties:
\begin{subequations}
\begin{align}
	p & = 0 \iff n = 0, 
		\label{E:PVANISHESIFFNDOES} \\
	\rho & = 0 \iff n = 0,
		\label{E:RHOVANISHESIFFNDOES} \\
	0 & \leq c_s \leq 1,
		\label{E:SPEEDOFSOUNDCONSTRAINTS} 
			\\
	c_s & = 0 \iff n = 0.
	\label{E:SPEEDOFSOUNDVANISHESINVACUUM}
\end{align}
\end{subequations}
The conditions in \eqref{E:SPEEDOFSOUNDCONSTRAINTS} can be stated in physical terms as follows: 
the speed of sound is non-negative and does not exceed the speed of light;
\eqref{E:SPEEDOFSOUNDCONSTRAINTS} is connected to the character of the equations,
which are hyperbolic when $c_s > 0$ and degenerate hyperbolic when $c_s = 0$;
the condition \eqref{E:SPEEDOFSOUNDVANISHESINVACUUM} implies, in particular,
that $c_s$ vanishes along the vacuum boundary $\partial \mathscr{P}$, from which it follows that
sound waves are not able to penetrate $\partial \mathscr{P}$
and propagate into vacuum spacetime.

In order to permit the fluid to accelerate along the vacuum boundary, 
we must impose an additional condition on the {\it rate of degeneracy} of the
sound speed $c_s$. We require that
\begin{align} \label{E:PHYSICALVACUUM}
	\D_{\hat{N}} (c_s^2)|_{\partial \mathfrak{M} } < 0 \,,
\end{align}
where
$\D_{\hat{N}}$ denotes the 
derivative of $c_s^2$ in the direction of the 
\emph{outer} unit normal $\hat{N}$ to the two-dimensional boundary surface
$\partial \mathfrak{M}$. Since \eqref{E:SPEEDOFSOUNDVANISHESINVACUUM}
implies that $c_s^2$ vanishes along $\partial \mathfrak{M}$,
we see from \eqref{E:PHYSICALVACUUM} that 
$c_s^2 \rightarrow 0$ like the distance function to $\partial \mathfrak{M}$.
In particular, there are constants $0 < C_1 < C_2$ such that
for $q \in \mathfrak{M} $, we have
\begin{align} \label{E:SPEEDOFSOUNDSQUAREDGROWSLIKEDISTANCETOBOUNDARY}
	C_1 d(q,\partial \mathfrak{M}) 
	& \leq c_s^2(q) 
	\leq C_2 d(q,\partial \mathfrak{M}),
\end{align}
where $d(q,\partial \mathfrak{M})$
denotes the Euclidean distance\footnote{Here we are viewing $\mathfrak{M}$ as a subset of Euclidean space $\mathbb{R}^3$.} 
in $\mathfrak{M}$ from $q$ to $\partial \mathfrak{M}$.

The condition \eqref{E:PHYSICALVACUUM} has been termed the {\it physical vacuum condition} in the non-relativistic setting; see 
\cite{CoutandShkoller2012} and \cite{JangMasmoudi2015}, and the references therein.  In the case that $c_s^2 \to 0$ like 
$d(q,\partial \mathfrak{M} )^p$ for integers $p\ge 2$, the analysis becomes significantly easier, but the fluid acceleration vanishes along the 
vacuum boundary.  If $c_s^2 \to 0$ like 
$d(q,\partial \mathfrak{M} )^s$ for some real number $s < 1$, then the problem is made highly singular and currently no theory is available for this
case.

\subsubsection{The equation of state}
A typical example of an equation of state used in astrophysics is that of a neutron star, where  in the interior region of the star we have $p\sim \frac{1}3\rho$ and as $\rho\to0$ the equation of state takes the form $p\sim \rho^{5/3}$ (see, for example, \cite{HsLiMa2004}).
For an extensive discussion of the equations-of-state occurring in the description of stars we refer the reader to~\cite{ZeNo}.

We consider the following equation-of-state:
\begin{align} \label{E:QUADRATIC}
p(\rho)  = \rho^\gamma \,, \ \ \gamma > 1 \,,
\end{align} which due to~\eqref{E:THERMO}--\eqref{E:ENTHALPY},
enforces the functional dependence between $\rho$ and $n$ of the form 
$\rho(n)=n (k-n^{\gamma-1})^{\frac1{1-\gamma}}$ for some $k>0$. Without loss of generality, we assume that $k=1$ and therefore that
\be\label{E:RHOOFN}
\rho = \frac{n}{(1-n^{\gamma-1})^{\frac 1{\gamma-1}}}.
\ee
It follows that $c_s^2 = \frac{\gamma n^{\gamma-1}}{1-n^{\gamma-1}}$ and since $0\le c_s^2<1$ we have
\begin{align} 
\label{E:PROPERNUMBERDENSITYSIZERESTRICTION0}
0\le n^{\gamma-1} < \frac{1}{\gamma+1}  \ \ \text{ on } \ \mathbb{R}^{1+3},
\end{align}
while~\eqref{E:SPEEDOFSOUNDSQUAREDGROWSLIKEDISTANCETOBOUNDARY}
implies that there exist constants $c_1,c_2>0$ such that 
\begin{align} \label{E:VACUUMBOUNDARY}
	c_1 d(q,\partial \mathfrak{M} ) 
	& \leq \mathring n^{\gamma-1}(q) 
	\leq c_2 d(q,\partial\mathfrak{M} ), \ \ q\in \mathfrak{M} \,.
\end{align}
From~\eqref{E:ENTHALPY}, we derive the following expression for the enthalpy per particle:
\begin{align}
\label{E:ENTHALPYSPECIAL0}
s & = \frac{1}{(1-n^{\gamma-1})^{\frac \gamma{\gamma-1}}}, \ \ \gamma>1.
\end{align}

\begin{remark}
When $\gamma=1$,  the equation of state $p = \mathscr{A} \rho, \ \ \mathscr{A}>0$ is
commonly studied in cosmology~\cite{Weinberg2009}, but it is not well-suited for the study of isolated fluid systems. 
\end{remark}

\subsubsection{Basic assumptions}\label{SSS:SIMPLE}
For clarity of exposition, we shall make a few simplifying assumptions on the initial data and the equation of state, which 
 capture all of the essential features of the analysis. 
 
\vspace{.1 in}
\noindent
{\bf Assumption 1. Quadratic equation of state.}
We consider the following equation of state
\begin{align} \label{E:QUADRATIC}
p(\rho)  = \rho^2,
\end{align} which due to~\eqref{E:RHOOFN}
gives the relationship
$$
\rho = \frac{n}{1-n}.
$$
It follows that $c_s^2 = \frac{2n}{1-n}$ and by~\eqref{E:PROPERNUMBERDENSITYSIZERESTRICTION0}
we have
\begin{align} 
\label{E:PROPERNUMBERDENSITYSIZERESTRICTION}
0\le n < \frac{1}3  \ \ \text{ on } \ \mathbb{R}^{1+3},
\end{align}
while~\eqref{E:SPEEDOFSOUNDSQUAREDGROWSLIKEDISTANCETOBOUNDARY}
implies that there exist constants $c_1,c_2>0$ such that 
\begin{align} \label{E:VACUUMBOUNDARY}
	c_1 d(q,\partial \mathfrak{M} ) 
	& \leq \mathring n(q) 
	\leq c_2 d(q,\partial\mathfrak{M} ), \ \ q\in \mathfrak{M} \,.
\end{align}
From~\eqref{E:ENTHALPYSPECIAL0}, we obtain the following expression for the enthalpy per particle:
\begin{align}
\label{E:ENTHALPYSPECIAL}
s & = \frac{1}{(1-n)^2}.
\end{align}

\vspace{.1 in}
\noindent
{\bf Assumption 2. Horizontally-periodic  reference domain $\mathfrak{M} $ }.
We assume that our initial $3$-surface $\mathfrak{M} $ is of the form
\[
\mathfrak{M}  = \mathbb T^2\times[0,1],
\] 
where $\mathbb T^2$ denotes the $2$-torus. This assumption allows us to use a single coordinate chart to describe the material manifold.
The surfaces $\{(t,x^3) = (0,0)\}$ and $\{(t,x^3) = (0,1)\}$ correspond to the initial location of the vacuum boundary.

\vspace{.1 in}
\noindent
{\bf Assumption 3. Small initial number density $\mathring n$.}
Without loss of generality, 
we assume that the initial number density $\mathring n$ is uniformly small on $\mathfrak{M}$:
\be\label{E:SMALLDENSITY}
\|\mathring n\|_{L^\infty(\mathfrak{M} )} \le \frac{\varepsilon}{\|\mathring{v}^0\|_{L^ \infty ( \mathfrak{M} )} } \ll 1.
\ee
By the vacuum boundary condition, the assumption~\eqref{E:SMALLDENSITY} is always verified in a small neighborhood of the boundary $\partial \mathfrak{M}$.
Away from the boundary, the Euler equations~\eqref{E:EULER}-~\eqref{E:normalizationeulerian} are non-degenerate and a priori estimates
in Sobolev spaces follow from standard energy methods for hyperbolic equations.
The removal of the assumption~\eqref{E:SMALLDENSITY} requires a standard partition-of-unity argument, separating the analysis into regions
that are near and away from the vacuum boundary. The assumption \eqref{E:SMALLDENSITY} allows us to avoid this partitioning argument 
and focus on the essential difficulties produced by the degeneracy of the vacuum boundary.

\subsection{Lagrangian coordinates}
\label{SS:LAGRANGIANCOODS}
We let $(y^1,y^2,y^3)$ be the rectangular coordinates induced on $\mathfrak{M}$ by the rectangular spacetime coordinates.
Our goal is to
formulate the relativistic Euler equations as 
a quasilinear wave-map-type system of equations
for the components of a map 
$\upeta: I \times \mathfrak{M} \rightarrow \mathbb{R}^{1+3}$,
where $I$ is an interval of proper time (as explained below) containing the initial time
$0$ in its interior.
By definition, Lagrangian coordinates are such that $\upeta$ is the flow map of the four-velocity vectorfield $u$:
\begin{align}
	\partial_{\tau} \upeta^{\nu}(\tau,y^1,y^2,y^3)
	& = u^{\nu} \circ \upeta (\tau,y^1,y^2,y^3), 
		\label{E:FLOWMAPEEQUATION} \\
	\upeta^{\nu}(0,y^1,y^2,y^3)
	& = (0,y^1,y^2,y^3).
		\label{E:FLOWMAPIC}
\end{align}
The components of $\upeta$ may be identified with the rectangular spacetime coordinates:
$x^{\nu} = \upeta^{\nu}(\tau,y^1,y^2,y^3)$. 
We often use the alternate notation $y^0 = \tau$ and $x^0 = t$. 
Lagrangian coordinates correspond to the {\em co-moving} coordinates, 
while the original rectangular coordinates on $\R^{1+3}$
correspond to the {\em inertial} coordinates.

For the remainder of the article, we equip $I\times\mathfrak{M}$ with the Lagrangian coordinates $(\tau,y^1,y^2,y^3)$.
We denote the full spacetime Lagrangian coordinate gradient by
\[
\ITIMESMD := (\partial_{\tau},\partial_1,\partial_2,\partial_3),
\]
the Lagrangian spatial $3$-gradient by 
\[
\MD := (\partial_1,\partial_2,\partial_3),
\] 
and the Lagrangian spatial horizontal gradient by 
\[
\AngMD := (\partial_1,\partial_2).
\]

\begin{definition}[\textbf{Important submanifolds of the spacetime cylinder $I \times\mathfrak{M}$ }]\label{D:CONSTANTTIMEHYPERSURFACES}
We introduce the constant $\tau$-surfaces:
	\begin{align}
		\mathfrak{M}_{\tau}
		& := \lbrace (\hat\tau,y^1,y^2,y^3) \ | \ \hat\tau = \tau, (y^1,y^2,y^3) \in \mathfrak{M} \rbrace \subset I \times \mathfrak{M}.
\end{align}
 For fixed $\tau$ and $y^3$,  we introduce the codimension two surface $\partial \mathfrak{M}_{ \tau, y^3}$ as
\begin{align}
\partial \mathfrak{M}_{ \tau, y^3}
& := \lbrace (\hat \tau,y^1,y^2,\hat y^3) \ | \ \hat \tau = \tau, \hat y^3 = y^3 \rbrace \subset \mathfrak{M}_{\tau}.
\end{align}
The sets $\partial \mathfrak{M}_{ \tau,y^3=1}$ and $\partial \mathfrak{M}_{ \tau,y^3=0}$ denote the top and bottom, respectively, the  constant
$\tau$-slices $\mathfrak{M}_ \tau$, and we denote the union $\partial \mathfrak{M}_{ \tau,y^3=1} \cup \partial \mathfrak{M}_{ \tau,y^3=0}$
by $\partial\mathfrak{M}_\tau$.
\end{definition}

%

\begin{remark} \label{R:PARTIALTAUISU}
Note that $\partial_{\tau}$ is precisely the vectorfield $u^{\alpha} \frac{\partial}{\partial x^{\alpha}}$ expressed
relative to Lagrangian coordinates; see \eqref{E:PARTIALTAUISTHEvectorfieldU}.
\end{remark}

\begin{remark}
We note that the image of a slice of constant Lagrangian coordinate time under the Lagrangian flow 
$\upeta$ is no longer (necessarily) a flat time slice.   
That is, $\upeta(\tau,\mathfrak{M})$ is generally not a constant time-slice with respect to inertial coordinates.    
\end{remark}

\subsection{Summation convention}\label{sec::summation_convention}
Lowercase Greek indices $\alpha$, $\beta$, etc. range from $0$ to $3$ and correspond to 
the rectangular coordinates $\lbrace x^{\alpha} \rbrace_{\alpha = 0,1,2,3}$ on $\mathbb{R}^{1+3}$.
Similarly, lowercase Latin indices $a$, $b$, etc. range from $1$ to $3$ and correspond to 
the rectangular \emph{spatial} coordinates $\lbrace x^a \rbrace_{a=1,2,3}$.
We write $\frac{\partial}{\partial x^{\alpha}}$ to denote the corresponding rectangular coordinate partial derivative operator.
We often use the alternate notation $x^0 = t$ and $\frac{\partial}{\partial x^0} = \frac{\partial}{\partial t}$.

Capital Latin indices $A,$ $K$, $L$, etc. range from $0$ to $3$ and correspond to the Lagrangian spacetime coordinates 
$\lbrace y^K \rbrace_{K = 0,1,2,3}$ on $I \times \mathfrak{M}$.

We write $\partial_K$ to denote the corresponding Lagrangian coordinate partial derivative operator.
We often use the alternate notation $y^0 = \tau$ and $\partial_0 = \partial_{\tau}$.

We sum repeated indices over their respective ranges.
We lower and raise  Greek and Latin indices with the Minkowski metric $g$ and its inverse $g^{-1}$, respectively.

\subsection{Lagrangian formulation}
\label{SS:LAGRANGIAN}

\subsubsection{Definitions and preliminary geometric constructions}

\begin{definition}[\textbf{The fluid variables as a function of the Lagrangian coordinates}]
\label{D:LAGRANGIANUNKNOWNS}
In terms of the flow map $\upeta$ of Sect.~\ref{SS:LAGRANGIANCOODS}, we define
(for $\mu = 0,1,2,3$)
\begin{align} \label{E:LAGRANGIANUNKNOWNS}
f = n \circ \upeta,  \ \ 
v^\mu = u^{\mu}\circ \upeta, \ \ 
S = s\circ\upeta.
\end{align}
\end{definition}

In deriving the Lagrangian formulation of the equations
and in our analysis, we will often 
change back and forth between the Lagrangian
and the rectangular coordinates.

\begin{definition}[\textbf{The change-of-variables matrices}]
\label{D:CHOVMATRICES}
We define the $4 \times 4$ 
``change-of-variables"
matrices $\mathscr{M}$ and $\mathscr{A}$,
the Jacobian determinant $\mathscr{J}$,
and the $4 \times 4$ cofactor transpose matrix $\Cof$
by
\begin{subequations}
\begin{align}
	\mathscr{M}_K^{\nu}
	& := \partial_K \upeta^{\nu},
		\label{E:MDEF} \\
	\mathscr{A}_{\nu}^K
	& := (\mathscr{M}^{-1})_{\nu}^K,
		\label{E:MINVERSEDEF} \\
	\mathscr{J}
	& := \mbox{\upshape{det}} \mathscr{M},
		\\
	\Cof_{\alpha}^K 
	& :=  \mathscr{J} \mathscr{A}_{\alpha}^K,
	\label{E:COFMATDEF}
\end{align}
\end{subequations}
\end{definition}
\noindent
with $K,L,\alpha,\beta,\mu,\nu = 0,1,2,3$.
Note that
\begin{align} \label{E:MATRIXTIMESMATRIXINVERSE}
	\mathscr{M}_K^{\alpha} \mathscr{A}_{\alpha}^L
	 = \delta_K^L, \  \ 	\mathscr{M}_A^{\mu} \mathscr{A}_{\nu}^A
	= \delta_{\nu}^{\mu},
\end{align}
where $\delta_K^L$ and $\delta_{\nu}^{\mu}$  both denote the  Kronecker delta, and where we sum over repeated lower and upper indices.
Moreover, from \eqref{E:MDEF}, 
\eqref{E:FLOWMAPEEQUATION},
and \eqref{E:MATRIXTIMESMATRIXINVERSE},
we have
\begin{align} \label{E:FOURVELOCITYAPPEARSINCHOVMATRIX}
	\mathscr{M}_0^{\mu}
	 = u^{\mu},
	\ \  v^{\alpha} \mathscr{A}_{\alpha}^K 
	= \delta_0^K.
\end{align}

By the chain rule, we have
\begin{align} \label{E:CHAINRULES}
	\partial_K 
	 = \mathscr{M}_K^{\alpha} \frac{\partial}{\partial x^{\alpha}},
	\ \  
	\frac{\partial}{\partial x^{\alpha}}
	= \mathscr{A}_{\alpha}^K \partial_K.
\end{align}
Thus, from \eqref{E:FOURVELOCITYAPPEARSINCHOVMATRIX} and \eqref{E:CHAINRULES}, 
we see that
\begin{align} \label{E:PARTIALTAUISTHEvectorfieldU}
\partial_\tau:= \partial_0 = u^{\alpha} \frac{\partial}{\partial x^{\alpha}},
\end{align}
as we mentioned in Remark~\ref{R:PARTIALTAUISU}.

For any vectorfield differential operator $\partial$, 
we note the following standard matrix and determinant differentiation identities:
\begin{subequations}
\begin{align} \label{E:DETAINVERSEDIFFERENTIATED}
	\partial \mathscr{A}_{\alpha}^K
	& = - \mathscr{A}_{\beta}^K 		
			\partial \partial_L \upeta^{\beta}
			\mathscr{A}_{\alpha}^L,
				\\
	\partial \mathscr{J}
	& = \mathscr{J} \mathscr{A}_{\alpha}^K \partial \mathscr{M}_K^{\alpha}
		= \mathscr{J} \mathscr{A}_{\alpha}^K \partial_K \partial \upeta^{\alpha},
			\label{E:JACOBIANDETDIFFERNTIATED}
			\\
	\partial (\mathscr{J} \mathscr{A}_{\alpha}^K)
	& = \mathscr{J} \partial_L \partial \upeta^{\beta}
			\left\lbrace
					\mathscr{A}_{\alpha}^K \mathscr{A}_{\beta}^L
				- \mathscr{A}_{\alpha}^L \mathscr{A}_{\beta}^K 
			\right\rbrace.
				\label{E:DETERMINANTTIMESDETAINVERSEDIFFERENTIATED} 
\end{align}
\end{subequations}

Moreover, as a simple consequence of \eqref{E:DETERMINANTTIMESDETAINVERSEDIFFERENTIATED}
and the symmetry property $\partial_L \partial_K \upeta^{\beta} = \partial_K \partial_L \upeta^{\beta}$,
we have the well-known \emph{Piola identity}:
\begin{align} \label{E:PIOLA}
	\partial_K \Cof_{\alpha}^K & = 0.
\end{align}

In addition, we deduce from 
\eqref{E:CHAINRULES}
and
\eqref{E:JACOBIANDETDIFFERNTIATED}
that
\begin{align} \label{E:PARTIALTAUJISDETERMINEDINTERMSOFDIVU}
	\partial_{\tau} \mathscr{J}
	& = \mathscr{J} \mathscr{A}_{\alpha}^K \partial_K v^{\alpha}
		= \mathscr{J} \partial_{\alpha} u^{\alpha}.
\end{align}

\begin{definition}(\textbf{Differential operators})
	\label{D:DIFFERENTIALOPERATORS}
	We define the following differential operators on spacetime vectorfields $X^{\mu}$:
	\begin{subequations}
	\begin{align}\label{E:LAGRANGIANGRADIENT}
		\Lagdiff_\mu X^{\nu}
		& := 
			\mathscr{A}_{\mu}^K \partial_K X^{\nu}, 
			\\
		\Lagdiv X
		& := \Lagdiff_{\alpha} X^{\alpha}
			=
			\mathscr{A}_{\alpha}^K \partial_K X^{\alpha},
			\label{E:LAGRANGIANDIVERGENCE} \\
		(\Lagvort X)_{\mu \nu}  
		 \label{E:LAGRANGIANVORTICITY} 
		& := \Lagdiff_{\mu} X_{\nu}
					-
				 	\Lagdiff_{\nu} X_{\mu}
			=
			\mathscr{A}_{\mu}^K \partial_K X_{\nu}
			-
			\mathscr{A}_{\nu}^K \partial_K X_{\mu}, 
	\end{align}
	\end{subequations}
	where the second equalities in the above formulas
	follow from \eqref{E:CHAINRULES}.
	
	Furthermore, we define, relative to the Lagrangian 
	coordinate vectorfields $(\partial_1,\partial_2,\partial_3)$, the following flat divergence and vorticity operators:
	\begin{subequations}
	\begin{align}
		\Flatdiv Y
		& := \sum_{A,a=1}^3\delta_a^{A} \partial_{A} Y^a,
			\label{E:FLATDIV} \\
		(\Flatvort Y)_{K j} 
		 & := \partial_{K} Y_j
		 	- \sum_{A,a=1}^3\delta_{K}^a \delta_j^{A} \partial_{A} Y_a, \ \ (K,j=1,2,3).
		 	\label{E:FLATVORT}
		\end{align}
	\end{subequations}
	In the above formulas, 
	$Y 
	=
	\sum_{a=1}^3 Y^a \frac{\partial}{\partial x^a}
	=
	\sum_{a=1}^3Y^a(\tau,y^1,y^2,y^3) \frac{\partial}{\partial x^a}$
	denotes a vectorfield in $\mathbb{R}^3$
	defined along $\mathfrak{M}_{\tau}$. Thus, 
	$Y^a(\tau,y^1,y^2,y^3)$ denotes a rectangular 
	spatial component function
	viewed as a function of the Lagrangian coordinates.
\end{definition}

\begin{remark}
	Note that \eqref{E:LAGRANGIANGRADIENT},
	\eqref{E:LAGRANGIANDIVERGENCE},
	and
	\eqref{E:LAGRANGIANVORTICITY} 
	are just the usual gradient, divergence, and vorticity operators
	associated to the Minkowski connection 
	$\D$. The purpose of the superscript
	``$\upeta$'' is to remind the reader that when the differential operators
	are expressed relative to Lagrangian coordinates,
	they depend on the derivatives of the flow map $\upeta$.
\end{remark}

\begin{definition}(\textbf{Contractions and inner products})
\label{D:LAGRANGIANPRODUCTS}
We define the following operators on vectorfields $X^\mu$ and $Y^\nu$:
	\begin{subequations}
	\begin{align}
		\Lagdiff X \cdot \Lagdiff Y
		& := \Lagdiff_\alpha X^{\beta} \ 
				 \Lagdiff_{\beta} Y^{\alpha}, \label{E:GRADIENTINNERPRODUCT}
			\\
		\langle X, Y \rangle_g
		& := X_{\alpha} Y^{\alpha},
			\label{E:INNERPRODUCT} \\
		\langle \Lagvort X, \Lagvort Y \rangle_g
		& := \Lagvort X_{\alpha\beta} \  \Lagvort Y^{\alpha \beta}. \label{E:VORTICITYINNERPRODUCT}
	\end{align}
	\end{subequations}
\end{definition}

\subsubsection{Initial values of the change of variables matrices and the Jacobian determinant}
\label{SSS:IVMATRICES}
The $4\times 4$ matrices 
$\mathring{\mathscr{M}} := \mathscr{M}|_{t=\tau=0}$
and
$\mathring{\mathscr{A}} := \mathscr{A} \big|_{t=\tau=0}$ 
are given by
\begin{subequations}
\begin{align} \label{E:INITIALCHOVMATRIX}
\left( \begin{array}{cc}
\mathring{\mathscr{M}}_0^0 & \mathring{\mathscr{M}}_0^i  \\
\mathring{\mathscr{M}}_j^0 & \mathring{\mathscr{M}}_j^i 
\end{array} \right)
& =
 \left( \begin{array}{cc}
 \mathring{v}^0 &  (\mathring{v}^i)_{1 \times 3} \\
 (\mathbf{0})_{3 \times 1} & (\mathscr \delta^i_j)_{3 \times 3}
\end{array} \right),
\end{align}
\begin{align} \label{E:INITIALINVERSECHOVMATRIX}
\left( \begin{array}{cc}
\mathring{\mathscr{A}}_0^0 &  \mathring{\mathscr{A}}_j^0 \\
\mathring{\mathscr{A}}_0^i & \mathring{\mathscr{A}}_j^i 
\end{array} \right)
& =
 \left( \begin{array}{cc}
 \frac{1}{\mathring{v}^0} &  (\mathbf{0})_{3 \times 1} \\
	(-\frac{\mathring{v}^i}{\mathring{v}^0})_{1 \times 3}& (\mathscr \delta^i_j)_{3 \times 3}
\end{array} \right).
\end{align}
\end{subequations}
In the corresponding non-relativistic problem (see~\cite{CoutandShkoller2012, JangMasmoudi2015}),
the Jacobian determinant $\mathscr{J}$ is initially identically $1$. In contrast,
in the relativistic setting, we deduce from \eqref{E:INITIALCHOVMATRIX} that
\begin{align} \label{E:INITIALJACOBIANDETERMINANT}
 \mathring{\mathscr{J}}
 :=\mathscr{J}\big|_{\tau=0} 
 = \mathring{v}^0.
\end{align}

\subsubsection{The Lagrangian formulation of the equations}
We now derive our Lagrangian formulation of the relativistic Euler equations
in the special case of the equation of state \eqref{E:QUADRATIC}.
We recall that by \eqref{E:ENTHALPYSPECIAL} and~\eqref{E:LAGRANGIANUNKNOWNS}, we have the following
expression for the enthalpy per particle in terms of $f$:
\begin{align} \label{E:ENTHAGAIN}
	\Enth
	& = \frac{1}{(1-f)^2}.
\end{align}

\begin{proposition}[\textbf{Lagrangian formulation of the relativistic Euler equations}] 
\label{P:LAGFORMULATION}
Relative to the Lagrangian coordinates constructed in Sect.~\ref{SS:LAGRANGIANCOODS}
and the Lagrangian unknowns of Definition~\ref{D:LAGRANGIANUNKNOWNS},
the relativistic Euler equations
\eqref{E:EULERCONTINUITY}-\eqref{E:normalizationeulerian}
with the equation of state \eqref{E:QUADRATIC}
can be expressed as the following system 
in the components $\lbrace \upeta^{\alpha} \rbrace_{\alpha = 0,1,2,3}$:
\begin{align} \label{E:INITIALFUNCTTION}
	f \mathscr{J}
	& = \mathring{F},
		\\
	\mathring{F} \Enth \partial_{\tau} v_{\mu}
	+
	\partial_K
	\left\lbrace
		\mathring{F}^2 \mathscr{A}^K_{\mu} \mathscr{J}^{-1} \Enth
	\right\rbrace
	- 2 \mathring{F}^2 v_{\mu} \mathscr{J}^{-2} \Enth^{3/2}\partial_{\tau} \mathscr{J} 
	& = 0,
	 \label{E:LAGRNAGIANMAINEVOLUTIONEQUATION} \\
g_{\alpha\beta}v^{\alpha}v^{\beta} 
& = -1,
\label{E:NORMALIZATIONLAGRANGIAN}
\end{align}
where 
\begin{align}
	\mathring{F}
	& :=  \mathring{n}  \mathring{v}^0
		\label{E:MATHRINGFDEFINITION}
\end{align}
and, in view of \eqref{E:NORMALIZATIONLAGRANGIAN}, we have $v^0 = \sqrt{1 + \sum_{a=1}^3 (v^a)^2}$.
In the above equations,
$v^{\mu} = \partial_{\tau} \upeta^{\mu}$,
$\mathscr{A}^K_{\mu}$ and $\mathscr{J}$ 
depend on the first Lagrangian derivatives of $\upeta$ as specified in
Definition~\ref{D:CHOVMATRICES},
and $\Enth$ is determined in terms of $f$ via \eqref{E:ENTHAGAIN}.

Moreover, equation 
 \eqref{E:LAGRNAGIANMAINEVOLUTIONEQUATION} can be written in the following two equivalent forms:
\begin{subequations}
\begin{align}
	\partial_{\tau} (\Enth v_{\mu}) 
	+ 
	\mathscr{A}^K_{\mu} \partial_K \Enth 
	& = 0,
	\label{E:LAGRANGIANENTHALPYEVOLUTION}
		\\
	\mathring{F} \Cof^3_{\mu} \partial_3 (\mathscr{J}^{-2}) 
	+
	2 \Cof^3_{\mu} (\partial_3 \mathring{F}) \mathscr{J}^{-2} 
	& = 
	- \Enth^{-1/2} \partial_{\tau}^2 \upeta_{\mu}
	+ 2 \mathring{F} \mathscr{J}^{-2} \g_{\tau} \mathscr{J} \partial_{\tau} \upeta_{\mu}
		\label{E:LAGRANGIANENTHALPYEVOLUTIONNEEDEDFORELLIPTIC} 
			\\
	& \ \
		- \mathring{F} \Cof_{\mu}^0 \partial_{\tau} (\mathscr{J}^{-2})
		- \mathring{F}\sum_{A=1}^2 \Cof_{\mu}^{A} \partial_{A} (\mathscr{J}^{-2})
		- 2 \sum_{A=1}^2(\Cof_{\mu}^{A} \partial_{A} \mathring{F}) \mathscr{J}^{-2}.
			\notag
\end{align}
\end{subequations}
\end{proposition}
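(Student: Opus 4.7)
The strategy is to translate each equation from Eulerian to Lagrangian coordinates via the dictionary $\partial_\tau = u^\alpha(\partial/\partial x^\alpha) \circ \upeta$ of \eqref{E:PARTIALTAUISTHEvectorfieldU} and $\partial/\partial x^\alpha = \mathscr{A}_\alpha^K \partial_K$ of \eqref{E:CHAINRULES}, and then to repackage using the Jacobian evolution \eqref{E:PARTIALTAUJISDETERMINEDINTERMSOFDIVU}, the Piola identity \eqref{E:PIOLA}, and the algebraic relations forced by the quadratic EOS: $p \circ \upeta = f^2 \Enth$, $\Enth = (1-f)^{-2}$, and $dp/dn = 2 n \Enth^{3/2}$. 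The three easy equations come first. Combining \eqref{E:EULERCONTINUITY} with \eqref{E:PARTIALTAUJISDETERMINEDINTERMSOFDIVU} gives $\partial_\tau (f \mathscr{J}) = \mathscr{J}\, \D_\alpha (n u^\alpha) \circ \upeta = 0$, so $f \mathscr{J}$ is constant in $\tau$ and equals $\mathring{n} \mathring{v}^0 = \mathring{F}$ by \eqref{E:INITIALJACOBIANDETERMINANT}, giving \eqref{E:INITIALFUNCTTION}. Equation \eqref{E:NORMALIZATIONLAGRANGIAN} is the pointwise composition of \eqref{E:normalizationeulerian} with $\upeta$, and \eqref{E:LAGRANGIANENTHALPYEVOLUTION} is the direct Lagrangian rewrite of \eqref{E:EULERIANENTHALPYEVOLUTION}.

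The main work is \eqref{E:LAGRNAGIANMAINEVOLUTIONEQUATION}. I would start from \eqref{E:REVELOCITY} in the form $(\rho + p) u^\alpha \D_\alpha u_\mu + u_\mu u^\alpha \D_\alpha p + \D_\mu p = 0$, convert to Lagrangian coordinates, and multiply through by $\mathscr{J}$. The coefficient $(\rho + p) \mathscr{J}$ collapses to $\mathring{F} \Enth$ by \eqref{E:INITIALFUNCTTION}. For the pressure-gradient, Piola converts $\mathscr{J} \mathscr{A}_\mu^K \partial_K(p \circ \upeta)$ into $\partial_K\bigl((p \circ \upeta) \Cof_\mu^K\bigr)$, and then the EOS identity $p \circ \upeta = \mathring{F}^2 \mathscr{J}^{-2} \Enth$ rewrites this as the desired $\partial_K(\mathring{F}^2 \mathscr{A}_\mu^K \mathscr{J}^{-1} \Enth)$. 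For the $u_\mu u^\alpha \D_\alpha p$ piece, the chain rule gives $\partial_\tau(p \circ \upeta) = (dp/dn)(u^\alpha \D_\alpha n) \circ \upeta$, which by the Eulerian continuity $u^\alpha \D_\alpha n = -n \D_\alpha u^\alpha$ equals $-(dp/dn) f \mathscr{J}^{-1} \partial_\tau \mathscr{J}$; substituting $dp/dn = 2 n \Enth^{3/2}$ yields $v_\mu \mathscr{J} \partial_\tau (p \circ \upeta) = -2 \mathring{F}^2 v_\mu \mathscr{J}^{-2} \Enth^{3/2} \partial_\tau \mathscr{J}$, which is the final term of \eqref{E:LAGRNAGIANMAINEVOLUTIONEQUATION}.

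For the elliptic-ready form \eqref{E:LAGRANGIANENTHALPYEVOLUTIONNEEDEDFORELLIPTIC}, the cleanest route is to divide \eqref{E:REVELOCITY} by $dp/dn$ first (rather than multiplying by $\mathscr{J}$), producing the Eulerian identity $\D_\mu n + u_\mu u^\alpha \D_\alpha n + \tfrac{1-n}{2}\, u^\alpha \D_\alpha u_\mu = 0$. Passing to Lagrangian and multiplying by $2$, I would expand $2 \mathscr{A}_\mu^K \partial_K f$ using $f = \mathring{F} \mathscr{J}^{-1}$ and $\partial_K \mathscr{J} = -\tfrac{\mathscr{J}^3}{2} \partial_K (\mathscr{J}^{-2})$, together with $\mathscr{J} \mathscr{A}_\mu^K = \Cof_\mu^K$, to reduce this gradient to $\mathring{F} \Cof_\mu^K \partial_K(\mathscr{J}^{-2}) + 2 \Cof_\mu^K (\partial_K \mathring{F}) \mathscr{J}^{-2}$. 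The $u_\mu u^\alpha \D_\alpha n$ piece becomes $v_\mu \partial_\tau f = -v_\mu \mathring{F} \mathscr{J}^{-2} \partial_\tau \mathscr{J}$, and the $\tfrac{1-n}{2} u^\alpha \D_\alpha u_\mu$ piece becomes $\tfrac{1}{2} \Enth^{-1/2} \partial_\tau^2 \upeta_\mu$. Isolating the $K=3$ summand on the left (using $\partial_0 \mathring{F} = 0$ to suppress a phantom $\Cof_\mu^0 \partial_0 \mathring{F}$ term) and moving everything else to the right produces exactly \eqref{E:LAGRANGIANENTHALPYEVOLUTIONNEEDEDFORELLIPTIC}.

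The principal obstacle is the algebraic reassembly in the last two steps: one must recognize that $\mathring{F}$, not $f$, is the right ``conserved'' factor to attach to $\mathscr{A}_\mu^K$ before invoking Piola, and that the specific power structure of the quadratic EOS (making $p$, $dp/dn$, and $\Enth$ all clean powers of $\mathring{F}$, $\mathscr{J}$, and $1-f$) is precisely what turns the nonlinear Euler equations into the divergence form suitable for the energy and elliptic estimates that will follow.
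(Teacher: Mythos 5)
Your proof is correct and reaches the same system by the same key ingredients (conservation of $f\mathscr J$ from the continuity equation, the Piola identity, the EOS algebra $p\circ\upeta = f^2\Enth = \mathring F^2\mathscr J^{-2}\Enth$, and $dp/dn = 2n\Enth^{3/2}$), but it organizes the chain of equivalences differently. The paper first establishes \eqref{E:LAGRANGIANENTHALPYEVOLUTION} as the Lagrangian form of \eqref{E:EULERIANENTHALPYEVOLUTION} and then treats it as the anchor: multiplying by $f\mathscr J$ and substituting $\partial_\tau\Enth = -2\mathring F\Enth^{3/2}\mathscr J^{-2}\partial_\tau\mathscr J$ gives \eqref{E:LAGRNAGIANMAINEVOLUTIONEQUATION}, while the identity $\partial_K\Enth = 2\Enth^{3/2}\partial_K f$ together with $f = \mathring F\mathscr J^{-1}$ gives \eqref{E:LAGRANGIANENTHALPYEVOLUTIONNEEDEDFORELLIPTIC}. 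You instead derive each of the two equivalent forms directly from the Eulerian momentum equation \eqref{E:REVELOCITY} under a different normalization: multiplying by $\mathscr J$ (for the divergence form) or dividing by $dp/dn$ (for the elliptic-ready form). These routes are algebraically interchangeable since \eqref{E:REVELOCITY} and \eqref{E:LAGRANGIANENTHALPYEVOLUTION} are equivalent; the paper's version makes the role of \eqref{E:LAGRANGIANENTHALPYEVOLUTION} as the central structural equation (important later for the vorticity estimates) more explicit, while yours emphasizes that the three forms are different weightings of the same Eulerian balance law. One small detail you handled correctly and that is worth flagging: when isolating the $K=3$ term, the apparent $\Cof_\mu^0(\partial_0\mathring F)\mathscr J^{-2}$ contribution vanishes precisely because $\mathring F$ is $\tau$-independent, which is why only the $A=1,2$ sums appear in \eqref{E:LAGRANGIANENTHALPYEVOLUTIONNEEDEDFORELLIPTIC}.
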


\begin{remark} From \eqref{E:SMALLDENSITY}, it follows that
$
\|\mathring F\|_{L^\infty(\mathfrak{M} )} \le \varepsilon \ll 1.
$
\end{remark}

\begin{remark}
Equations \eqref{E:INITIALFUNCTTION} and \eqref{E:NORMALIZATIONLAGRANGIAN}
are constraints, while \eqref{E:LAGRNAGIANMAINEVOLUTIONEQUATION}
is a second-order wave-map-type system in the components 
$\lbrace \upeta^{\alpha} \rbrace_{\alpha = 0,1,2,3}$.
\end{remark}

\begin{remark}\label{rem_Mslice}  Because the vacuum boundary moves with the fluid velocity $u$ according to \eqref{E:FREEBOUNDARYCONDITION}, the
set  $\mathscr{P} $ defined in \eqref{E:POSITIVITYSET} is in fact given as
	$\mathscr{P} 
	 = \cup_{\tau \in I}  \upeta(\tau,\mathfrak{M})\,$.
We see, then, that the hypersurfaces on the right-hand side of \eqref{E:POSITIVITYFOLIATED} 
are
$$
\widetilde{ {\mathfrak{M}}_{\tau}} := \upeta(\tau,\mathfrak{M}) \,.
$$
Thus, once we solve for the Lagrangian flow map $\upeta$, 
we obtain precise information about the location of the vacuum boundary
and the tangential motion of particles along it.
 \end{remark}

\begin{remark}[\textbf{All three forms of the momentum equation are important}]
	\label{R:ALLTHREEFORMS}
	Though equations
	\eqref{E:LAGRNAGIANMAINEVOLUTIONEQUATION}, 
	\eqref{E:LAGRANGIANENTHALPYEVOLUTION},
	and \eqref{E:LAGRANGIANENTHALPYEVOLUTIONNEEDEDFORELLIPTIC}
	are equivalent (given \eqref{E:INITIALFUNCTTION} and \eqref{E:NORMALIZATIONLAGRANGIAN}),
	we exploit the precise structure of each of these equations in a distinct way.
	We use \eqref{E:LAGRNAGIANMAINEVOLUTIONEQUATION} for energy estimates to control horizontal and 
	time derivatives and to establish the regularity of the vacuum boundary, while
	\eqref{E:LAGRANGIANENTHALPYEVOLUTION} is used for vorticity estimates,
	and \eqref{E:LAGRANGIANENTHALPYEVOLUTIONNEEDEDFORELLIPTIC} is used for estimating vertical derivatives by degenerate
	elliptic-type estimates.
\end{remark}
\begin{remark}[\textbf{Importance of~\eqref{E:LAGRANGIANENTHALPYEVOLUTION}}]
Since 
$\frac{\partial}{\partial x^{\mu}} \Enth = \mathscr{A}^K_{\mu}\g_K \Enth$
is annihilated by the vorticity operator \eqref{E:LAGRANGIANVORTICITY}, 
equation \eqref{E:LAGRANGIANENTHALPYEVOLUTION} allows us to derive 
an evolution equation for the vorticity of $\Enth v_{\mu}$ 
that leads to a gain in regularity compared to the regularity
suggested by a naive derivative count. 
We use a high-order differentiated version of this statement in Sect.~\ref{S:VORTICITY}.
\end{remark}

\begin{proof}[Proof of Prop.~\ref{P:LAGFORMULATION}]
Equation \eqref{E:NORMALIZATIONLAGRANGIAN} is a trivial consequence of \eqref{E:normalizationeulerian}.

To obtain \eqref{E:INITIALFUNCTTION}, 
we first note that in rectangular coordinates,
equation~\eqref{E:EULERCONTINUITY} is equivalent to
$
	\frac{\partial}{\partial x^{\alpha}} (n u^{\alpha}) = 0
$.
Expressing this equation in Lagrangian coordinates
with the help of \eqref{E:CHAINRULES} and \eqref{E:PARTIALTAUISTHEvectorfieldU},
we obtain 
\begin{align}
\partial_{\tau} f 
& =
- f \mathscr{A}_{\alpha}^K \partial_{\tau} \partial_K \upeta^{\alpha}.
\label{E:FEVOLUTION}
\end{align}
Next, we use \eqref{E:JACOBIANDETDIFFERNTIATED} to deduce
that $\mathscr{J}$ verifies a similar equation (but without a minus sign on the right-hand side of):
\begin{align} \label{E:JACOBIANEVOLUTION}
\partial_{\tau} \mathscr{J}
= \mathscr{J} \mathscr{A}_{\alpha}^K \partial_K \partial_{\tau} \upeta^{\alpha}.
\end{align} 
Combining \eqref{E:FEVOLUTION} and \eqref{E:JACOBIANEVOLUTION}, we deduce that 
$
\partial_{\tau} \ln (f \mathscr{J}) 
= 0
$.
Since 
$\mathscr{J}|_{\tau = 0} = \mathring{v}^0 := v^0|_{\tau=0}$,
we therefore obtain \eqref{E:INITIALFUNCTTION} as desired.

To obtain \eqref{E:LAGRANGIANENTHALPYEVOLUTION},
we simply use \eqref{E:CHAINRULES} and \eqref{E:PARTIALTAUISTHEvectorfieldU}
to express equation~\eqref{E:EULERIANENTHALPYEVOLUTION} in Lagrangian coordinates.

To derive \eqref{E:LAGRNAGIANMAINEVOLUTIONEQUATION},
we first use \eqref{E:ENTHAGAIN},
\eqref{E:FEVOLUTION}-\eqref{E:JACOBIANEVOLUTION},
and \eqref{E:INITIALFUNCTTION}
to deduce that
\begin{align} \label{E:ENTHDERIVATIVEID}
	\g_{\tau} \Enth
	& = 2 \Enth^{3/2} \g_{\tau} f
		= - 2 \Enth^{3/2} f \mathscr{J}^{-1} \g_{\tau} \mathscr{J}
		= - 2 \mathring{F} \Enth^{3/2} \mathscr{J}^{-2} \g_{\tau} \mathscr{J}.
\end{align}
Thus, we can express 
the product of $f \mathscr{J}$ and
the first product on LHS \eqref{E:LAGRANGIANENTHALPYEVOLUTION} as follows:
$
f \mathscr{J} \g_{\tau} (\Enth v_{\mu}) 
= f \mathscr{J} \Enth \g_{\tau} v_{\mu}
+ f \mathscr{J} v_{\mu} \g_{\tau} \Enth
= f \mathscr{J} \Enth \g_{\tau} v_{\mu}
- 2 \mathring{F}^2  v_{\mu} \Enth^{3/2} \mathscr{J}^{-2} \g_{\tau} \mathscr{J}
$.
Moreover, using 
\eqref{E:ENTHAGAIN},
definition \eqref{E:COFMATDEF}, 
and the Piola identity \eqref{E:PIOLA}, 
we can express 
the product of $f \mathscr{J}$ and
the second product on LHS \eqref{E:LAGRANGIANENTHALPYEVOLUTION} as follows:
$
f \mathscr{J} \mathscr{A}^K_{\mu} \g_K \Enth
=
	\partial_K(\Cof^K_{\mu} f^2 \Enth)
= \partial_K(\mathscr{A}^K_{\mu} \mathscr{J} f^2 \Enth)
$.
Inserting these identities into 
$f \mathscr{J} \times$ \eqref{E:LAGRANGIANENTHALPYEVOLUTION}
and using \eqref{E:INITIALFUNCTTION} to express
$f \mathscr{J} = \mathring{F}$,
we arrive at the desired equation \eqref{E:LAGRNAGIANMAINEVOLUTIONEQUATION}.

Equation \eqref{E:LAGRANGIANENTHALPYEVOLUTIONNEEDEDFORELLIPTIC} follows from
equation \eqref{E:LAGRANGIANENTHALPYEVOLUTION},
the fact that $\partial_K \Enth = 2 \Enth^{3/2} \partial_K f$
(much like in the first equality of \eqref{E:ENTHDERIVATIVEID}),
from using \eqref{E:INITIALFUNCTTION} to substitute 
$f = \mathring{F} \mathscr{J}^{-1}$ in the previous equation, 
from using the simple identity
$2 \mathscr{A}^K_{\mu} \partial_K (\mathring{F} \mathscr{J}^{-1})
= 2 \mathscr{A}^K_{\mu} (\partial_K \mathring{F}) \mathscr{J}^{-1}
	+
	2 \mathring{F} \Cof^K_{\mu} \mathscr{J}^{-1} \partial_K (\mathscr{J}^{-1})
= 
2 \mathscr{A}^K_{\mu} (\partial_K \mathring{F}) \mathscr{J}^{-1}
+
\Cof^K_{\mu} \partial_K (\mathscr{J}^{-2})
$
(see \eqref{E:PIOLA}),
and from straightforward calculations.
\end{proof}

\subsection{Notation}
\label{SS:NOTATION}

Given a $4$-vector $X^\mu,$ we denote by 
\[
\underline X = (X^1,X^2,X^3)
\]
the projection onto the spatial components of $X$.  
In our analysis, especially in Sect.~\ref{S:ELLIPTIC}, we shall often use the spatial components projection  $\underline{\upeta} = (\upeta^1,\upeta^2,\upeta^3)$ 
of the flow map $\upeta$.

$C$ and $c_i$ denote universal positive constants that may change from line to line. 
If $x,y \in \mathbb R$, then $x \lesssim y$ means 
that there exists a universal constant $C > 0$ such that $x \leq C y$. 
If $Z$ is non-negative, then we write $x \lesssim_Z y$ to mean that the constant $C$ from the previous inequality is
allowed to depend in a continuous increasing fashion on $Z$.
$\mathring{M}$ denotes a real number whose size 
is bounded by the initial data and it is allowed to vary from line to line.
For convenience, we sometimes soak factors of $\tau$ and $\delta^{-1}$ into $\mathring{M}$,
where $\delta > 0$ is a small constant introduced later in the paper.

We use the notation $P(\norm)$ to denotes a generic positive, increasing function of $\norm$. 
It is allowed to vary from line to line. 
We adopt the convention that \emph{$P$ is allowed to depend on constants, $\mathring{M}$, and $\tau$.}
To ease the notation when dealing with various error terms, by
$O_{L^p(\mathfrak{M}_{\tau})}(Q)$, 
we denote a term 
whose $\| \cdot \|_{L^p(\mathfrak{M}_{\tau})}$ norm is $\leq C Q$.

\subsubsection{Lebesgue and Sobolev norms relative to the Lagrangian coordinates}
\label{SS:LEBESGUEANDSOBOLEV}

We define spatial integrals over the hypersurfaces $\mathfrak{M}_{\tau}$ with 
respect to the volume form of the flat Euclidean metric on $\mathfrak{M}_{\tau}$, 
or equivalently, with respect to the measure $dy:=dy^1 dy^2 dy^3$.
That is,
\begin{align} \label{E:INTEGRALSDEFINITION}
	\int_{\mathfrak{M}_{\tau}}
		f
	\, dy
	& := 
	\int_{(y^1,y^2,y^3) \in \mathfrak{M}}
		f(\tau,y^1,y^2,y^3)
	\, dy^1 dy^2 dy^3.
\end{align}

For $1 \leq p < \infty$, we define the corresponding Lebesgue norms by
\begin{align} 
	\| f \|_{L^p(\mathfrak{M}_{\tau})}
	& := 
		\left(
			\int_{\mathfrak{M}_{\tau}}
				f^p
			\, dy
		\right)^{1/p}.
\end{align}

For integers $k \geq 0$, we define the corresponding 
$L^2-$based Sobolev norm by
\begin{align}
	\| f \|_{H^k(\mathfrak{M}_{\tau})}^2
	& := 
		\sum_{i_1 + i_2 + i_3 \leq k}
		\|\partial_1^{i_1} \partial_2^{i_2} \partial_3^{i_3} f \|_{L^2(\mathfrak{M}_{\tau})}^2,
\end{align}
where $\lbrace \partial_{A} \rbrace_{A=1,2,3}$ 
denote the Lagrangian \emph{spatial} coordinate partial derivative vectorfields.

\subsection{Main result}
Let $T>0$ and let $\upeta: [0,T] \times \mathfrak{M} \to \R^{1+3}$ be a smooth flow map.
For $\tau [0,T]$, we define the following square norm:
\begin{align} 
\norm(\tau)  
&:= 
\sup_{\tau' \in [0,\tau]}
	\sum_{p=0}^4\| \g_{\tau}^{2p} \upeta \|_{H^{4-p}(\mathfrak{M}_{\tau'})}^2 
		 + 
\sup_{\tau' \in[0,\tau]}
	\sum_{p=0}^3 \|\mathring{F}\g_{\tau}^{ 2p}(\mathscr{J}^{-2}) \|_{H^{4-p}(\mathfrak{M}_{\tau'})}^2 
	\notag  \\
& \ \
	+ 
	\sup_{\tau' \in[0,\tau]}
	\sum_{p=0}^4
	\left[
		\|\mathring{F} \g_{\tau}^{2p} \AngMD^{4-p} \MD \upeta \|_{L^2(\mathfrak{M}_{\tau'})}^2
		+ 
		\left\|
			\sqrt{\mathring{F}} \g_{\tau}^{2p+1} \AngMD^{4-p} \upeta 
		\right\|_{L^2(\mathfrak{M}_{\tau'})}^2
	\right] \notag \\
& \ \	
	+\sup_{\tau'\in[0,\tau]} \|\Lagvort v \|_{H^3(\mathfrak M_{\tau'})}^2 
	+ \sup_{\tau'\in[0,\tau]}\|\mathring{F}\AngMD^4\Lagvort v \|_{L^2(\mathfrak M_{\tau'})}^2\,,
\label{E:NORM}
\end{align}
where we recall the definition~\eqref{E:MATHRINGFDEFINITION} of $\mathring{F}$.
Our main theorem provides a short-time a priori bound for $\norm(\tau)$.

\begin{theorem}[A priori estimates in Sobolev spaces]\label{T:MAIN}
Let the initial particle number density $\mathring n\in H^4(\mathfrak{M})$ satisfy the physical vacuum boundary condition~\eqref{E:VACUUMBOUNDARY}
and the condition~\eqref{E:SMALLDENSITY}. 
If $\upeta^\mu$ is a  smooth solution to~\eqref{E:LAGRNAGIANMAINEVOLUTIONEQUATION}-~\eqref{E:NORMALIZATIONLAGRANGIAN}, then there exists a time $T=T(\norm(0))$ and a constant $C$ such that 
\begin{align}
\norm(\tau) \le C \norm(0), \ \ \tau \in I=[0,T].
\end{align}
\end{theorem}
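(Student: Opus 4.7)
The plan is a standard bootstrap / continuity argument: I would assume $\norm(\tau)\le 2\mathring{M}$ on some interval $[0,T^*]$ with $\mathring{M}$ determined by $\norm(0)$, derive a differential inequality
$$\norm(\tau) \le C\norm(0) + \int_0^\tau P(\norm(\tau'))\,d\tau',$$
and then choose $T=T(\norm(0))$ small enough to close it. The work splits into four intertwined families of estimates, one tailored to each group of terms in \eqref{E:NORM}, and each relying on a different equivalent form of the momentum equation, as emphasized in Remark~\ref{R:ALLTHREEFORMS}.

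First, for the tangential high-order energies, I would apply the mixed operator $\partial_\tau^{2p}\bar\partial^{4-p}$ to the wave-map-like equation \eqref{E:LAGRNAGIANMAINEVOLUTIONEQUATION} and test against $\partial_\tau^{2p+1}\bar\partial^{4-p}\upeta^\mu$, for $0\le p\le 4$. Since $\bar\partial=(\partial_1,\partial_2)$ is tangential to $\partial\mathfrak{M}$, horizontal integrations by parts produce no boundary contributions. After a commutator analysis and the use of \eqref{E:ENTHAGAIN}, the principal terms organize into the time derivative of a coercive weighted energy of the schematic form
$$\tfrac12\tfrac{d}{d\tau}\int_{\mathfrak{M}_\tau}\Bigl\{\mathring{F}\,\Enth\,|\partial_\tau^{2p+1}\bar\partial^{4-p}\upeta|^2 + \mathring{F}^2\mathscr{J}^{-1}\Enth\,|\Lagdiff\,\partial_\tau^{2p}\bar\partial^{4-p}\upeta|^2\Bigr\}\,dy,$$
plus remainders controlled by $\int_0^\tau P(\norm(\tau'))\,d\tau'$. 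The $\mathring{F}$ weights in this energy match the physical-vacuum rate \eqref{E:VACUUMBOUNDARY} and furnish coercivity precisely on the weighted Sobolev spaces that define $\norm$.

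Tangential energies alone do not see vertical derivatives $\partial_3^k\upeta$, so next I would use the recast momentum equation \eqref{E:LAGRANGIANENTHALPYEVOLUTIONNEEDEDFORELLIPTIC}, which isolates $\mathring{F}\,\Cof^3_\mu\,\partial_3(\mathscr{J}^{-2})$ on the left. Viewed as a degenerate elliptic equation with weight $\mathring{F}\sim d(\cdot,\partial\mathfrak{M})$ multiplying a pure normal derivative, it trades every factor of $\partial_\tau^2$ in the source for one vertical spatial derivative on $\mathscr{J}^{-2}$ (and hence on $\underline{\upeta}$ via \eqref{E:DETAINVERSEDIFFERENTIATED}), at the cost of a $\mathring{F}$ weight; iterating yields the mixed vertical control appearing in the second and third lines of \eqref{E:NORM}. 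In parallel, I would apply the Lagrangian curl $\Lagvort$ to \eqref{E:LAGRANGIANENTHALPYEVOLUTION}: since $\mathscr{A}^K_\mu\partial_K\Enth=\frac{\partial}{\partial x^\mu}\Enth$ is a pure spacetime gradient, it is annihilated by $\Lagvort$, leaving $\partial_\tau(\Lagvort v)_{\mu\nu}$ equal to commutator sources built from $\partial_\tau\mathscr{A}$. Unlike in the non-relativistic setting these sources do not vanish, but they are of lower order in the bootstrap hierarchy, and an $L^2$ energy estimate for $\AngMD^k\Lagvort v$ then controls the last line of \eqref{E:NORM}.

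Finally, the density-like quantity $\mathring{F}\,\partial_\tau^{2p}(\mathscr{J}^{-2})$ is recovered from $\upeta$ algebraically through \eqref{E:INITIALFUNCTTION}, \eqref{E:ENTHAGAIN} and Moser-type product estimates, exploiting the smallness assumption \eqref{E:SMALLDENSITY}; summing the four families and invoking Gronwall then yields the claimed short-time bound. The hardest step is the degenerate elliptic estimate: one has to justify the trade of time for vertical derivatives using a Hardy-type inequality adapted to the boundary-vanishing weight $\mathring{F}\sim d(\cdot,\partial\mathfrak{M})$, while simultaneously tracking the nontrivial initial Jacobian \eqref{E:INITIALJACOBIANDETERMINANT} and the genuinely relativistic correction terms (the $v_\mu$, $\Enth^{3/2}$, and $\partial_\tau\mathscr{J}$ contributions in \eqref{E:LAGRNAGIANMAINEVOLUTIONEQUATION}) that are absent from the non-relativistic treatment of \cite{CoutandShkoller2012}, all without losing derivatives at the vacuum boundary.
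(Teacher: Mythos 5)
Your outline correctly identifies the three-pronged structure (tangential energy, degenerate elliptic for vertical derivatives, vorticity transport) and the correct equivalent form of the momentum equation to use in each, and the bootstrap framework $\norm(\tau)\le\mathring M + \tau P(\norm(\tau))$ is indeed the paper's closing device. However, there are two genuine gaps that would cause the proposed energy step to fail as written.

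First, you present the tangential energy as a ``coercive weighted energy'' $\int \mathring{F}\Enth\,|\partial_\tau^{2p+1}\AngMD^{4-p}\upeta|^2 + \mathring{F}^2\mathscr{J}^{-1}\Enth\,|\Lagdiff\partial_\tau^{2p}\AngMD^{4-p}\upeta|^2\,dy$, but you do not say which quadratic form $|\cdot|^2$ denotes. The contraction that falls naturally out of testing equation~\eqref{E:LAGRNAGIANMAINEVOLUTIONEQUATION} is the Minkowski inner product $\langle\cdot,\cdot\rangle_g$, which is \emph{Lorentzian} and hence not positive definite, so the resulting energy functional is not coercive. The paper's fix is to introduce the Riemannian metric $h_{\mu\nu}=g_{\mu\nu}+2v_\mu v_\nu$ from~\eqref{E:HMETRIC}, prove its positivity (Lemma~\ref{L:POSITIVITYOFH}), and show that the difference $\langle X,X\rangle_h-\langle X,X\rangle_g=2(v_\alpha X^\alpha)^2$ is a controlled error term, using the improved estimates for $v_\alpha\partial_\tau^{2p}\AngMD^{4-p}\upeta^\alpha$ that come from differentiating the constraint~\eqref{E:NORMALIZATIONLAGRANGIAN} (Lemma~\ref{L:CONTRACTIONAGAINSTUFIRSTSOBOLEVESTIMATE} and Lemma~\ref{L:EQUIVALENCE}). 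Without this intermediate device the energy step simply does not produce a positive-definite quantity, and this is a specifically relativistic difficulty absent from the non-relativistic treatment you are adapting.

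Second, the logical ordering and the nature of the vorticity sources are misidentified. You describe the sources in $\partial_\tau(\Lagvort v)$ as ``of lower order in the bootstrap hierarchy,'' but the identity~\eqref{E:VORTICITYBASIC} contains terms like $\g_\tau^2 v_\nu v_\mu - \g_\tau^2 v_\mu v_\nu$ that, after applying $\partial_\tau^{2p}\AngMD^{3-p}$, are \emph{top order}; what saves them is not a reduced derivative count but an extra factor of $\mathring{F}$ together with the smallness assumption~\eqref{E:SMALLDENSITY}, which yields a bound of the form $C\varepsilon\,\norm(\tau)$. More seriously, in the energy identity~\eqref{E:ENERGYIDENTITY1}, after decomposing $\Lagdiff_\beta\dot\upeta^\alpha\Lagdiff_\alpha\dot\upeta^\beta$ into symmetric and antisymmetric parts via~\eqref{E:SPLITGRADIENTINTOSYMMETRICANDANTISYMMETRIC}, there is a vorticity term $\int_{\mathfrak{M}_\tau}\mathring{F}^2\Enth\mathscr{J}^{-1}\langle\Lagvort\dot\upeta,\Lagvort\dot\upeta\rangle_g\,dy$ evaluated \emph{at time} $\tau$, not time-integrated, so it cannot be made small by shrinking $T$. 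The paper therefore \emph{must} obtain the vorticity estimates of Prop.~\ref{P:VORTICITYESTIMATES} independently and prior to the energy estimates; your ``in parallel'' treatment of vorticity as controlling only the last line of~\eqref{E:NORM} misses this dependency and would leave the energy identity with an uncontrolled term. Relatedly, assembling full Sobolev bounds from divergence, vorticity and tangential data requires the Hodge-type estimate and tangential trace inequality (Lemmas~\ref{L:HODGEEST},~\ref{L:TANGENTIALTRACE}), which you do not invoke.
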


\begin{remark}
As we explained in Sect.~\ref{SSS:SIMPLE}, the assumption~\eqref{E:SMALLDENSITY} is in not essential, 
but it allows us to simplify the induction scheme that we use to close our estimates.
All of the essential difficulties of the problem remain under the smallness assumption. 
Note that for $\varepsilon<\frac{1}{3},$ 
assumption~\eqref{E:SMALLDENSITY} in particular implies the validity of the condition 
$0\le \mathring f<\frac{1}{3}$, 
which corresponds to the physical requirement that 
the speed of sound is less than the speed of light; 
see~\eqref{E:PROPERNUMBERDENSITYSIZERESTRICTION}.
\end{remark}

\begin{remark}
Although we only study special relativistic fluids in this paper,
our theorem and methods can be easily adapted to 
prove a priori bounds for solutions of~\eqref{E:EULER} 
on a general Lorentzian spacetime $(\tilde{M},\tilde{g})$ with suitable smoothness assumptions on the Lorentzian metric 
$\tilde{g}$. A non-flat metric $\tilde{g}$ 
leads to the presence of additional lower-order source terms arising from 
the covariant derivatives in~\eqref{E:EULER}. 
They can be regarded as lower-order error terms from the point of view of our analysis.\end{remark}

\subsubsection{The energy}
In order to obtain energy estimates for tangential derivatives (horizontal and temporal), it is convenient to 
introduce the following Riemannian metric (for $\mu,\nu=0,1,2,3$):
\begin{align} \label{E:HMETRIC}
h_{\mu\nu} = g_{\mu\nu} + 2 v_\mu v_\nu.
\end{align}
Note that
\begin{align} \label{E:HINVERSEMETRIC}
	(h^{-1})^{\mu \nu} = g^{\mu \nu} + 2 v^{\mu} v^{\nu},
\end{align}
as can easily be checked with the help of \eqref{E:NORMALIZATIONLAGRANGIAN}.
We establish the positivity of $h$ in Lemma~\ref{L:POSITIVITYOFH}.

\begin{definition}[\textbf{High-order energy function}]
\label{D:ENERGY}
We define 
\begin{align} 
\mathcal{E}
= \mathcal{E}(\tau)
:= \sum_{p=0}^4 \mathcal{E}_p(\tau),
\label{E:ENERGY}
\end{align}
where for $p=0,1,2,3,4$, we set
\begin{align}\label{E:energya}
\mathcal{E}_p(\tau) 
& := 
 	\frac{1}{2}
	\int_{\mathfrak{M}_{\tau}}
		\frac{\mathring{F}}{(1-f)^2} \langle \partial_{\tau}^{2p+1}\AngMD^{4-p} \upeta, \, \partial_{\tau}^{2p+1}\AngMD^{4-p} \upeta \rangle_h\notag \\
 &\qquad\qquad
		+ 
		\frac{\mathring{F}^2\mathscr{J}^{-1}}{(1-f)^2}\langle \Lagdiff \partial_{\tau}^{2p}\AngMD^{4-p} \upeta , \Lagdiff \partial_{\tau}^{2p}\AngMD^{4-p} \upeta \rangle_h  
	\, dy \\ 
 & 	\qquad \qquad
 			+
 			\frac{1}{2}
 			\int_{\mathfrak{M}_{\tau}}
 				\mathring{F}^2 \frac{1+f}{(1-f)^3} \mathscr{J}^{-3}\left( \partial_{\tau}^{2p}\AngMD^{4-p} \mathscr{J}\right)^2 
 			\, dy. 
 			\notag
\end{align}
\end{definition}

\begin{remark}
Note that the integrands in $\mathcal{E}$ are positive definite 
due to the positivity of the metric $h$ and the inequality $f<1$, which holds for short time due to the 
assumption~\eqref{E:SMALLDENSITY}. 
We also note that by \eqref{E:CONNECTIONCOMPARISON},
the operator $\Lagdiff$ appearing in \eqref{E:energya} is effectively 
equivalent to the operator $\ITIMESMD$.
We provide the main a priori estimate for $\mathcal{E}$
in Prop.~\ref{P:ENERGYLEMMA}.   
Furthermore, in conjunction with vorticity estimates and elliptic-type estimates for vertical derivatives,
bounds for $\mathcal{E}(\tau)$ will provide bounds for the norm $\norm(\tau)$.
\end{remark}

\subsection{Methodology and outline of the paper}
		 In our proof, we obtain bounds for $\norm(\tau)$ by a combination of energy estimates, 
		 vorticity estimates, and degenerate elliptic-type estimates, generalizing the approach for the non-relativistic Euler
		 equations presented in \cite{CoutandShkoller2012}.
		
In order to prove Theorem~\ref{T:MAIN}, we establish a bound of the form
\begin{align}\label{E:POLYNOMIAL}
\norm(\tau) \leq \mathring{M} + \tau P(\norm(\tau)),
\end{align}
where $P$ is a positive, increasing function of $\norm$ 
and $\mathring{M}$ is a quantity depending only on $\norm(0)$.
This type of polynomial inequality yields the desired a priori bound by a standard continuity argument; see, for instance, Section 4.5 of~\cite{CoutandShkoller2012}.

The building block of our approach is the use of the differential operators $\g_\tau$, $\AngMD$  
that are tangent to the characteristics\footnote{Here, by characteristics, we mean $\partial \mathscr{P}$,
where $\mathscr{P}$ is defined in \eqref{E:POSITIVITYSET}.}
(see Sect.~\ref{SS:NOTATION}). 
A curious aspect of our problem, already present at the non-relativistic level, is the effective scaling between the space and time derivatives: in our
norm $ \norm(\tau)$, two time derivatives scale like one space derivative.
This is caused by the degeneracy of the physical vacuum condition~\eqref{E:PHYSICALVACUUM} and follows from
weighted embedding~\eqref{E:WEIGHTEDEMBEDDING}.
For this reason, our energy estimates are based on the use of the operators
\[
\partial_{\tau}^{2p} \AngMD^{4-p}, \ \ p=0,1,2,3,4
\]
which have the two-to-one scaling.

As Step 1 in Sect.~\ref{S:VORTICITY}, we begin with the relativistic {\em vorticity} estimates.
In contrast to the non-relativistic case studied in \cite{CoLiSh2010,CoutandShkoller2012, JangMasmoudi2015},   
the vorticity of $\g_\tau v$ 
{\em does not} vanish. Instead, due to the formulation~\eqref{E:LAGRANGIANENTHALPYEVOLUTION},
we infer that 
\[
\Lagvort \left(\g_\tau (\Enth v)\right) = 0.
\]
Therefore, the vorticity of $\g_\tau v$ equals an effective source term, which contains top-order terms as shown in 
Lemma~\ref{L:VORTICITYLEMMA}.
Fortunately, these top-order terms contain pure $\tau$-derivatives. Their $\mathring{F}$-weighted norms are bounded by $\varepsilon \norm$,
(with $\varepsilon$ as in \eqref{E:SMALLDENSITY})
 since the horizontal derivatives of $v=\g_\tau\upeta$
appear in the norm $\norm$ with a different homogeneity in $\mathring{F}$ resulting in an estimate of the type:
\[
\|\mathring{F}\g_\tau^{2p}\t^{4-p}v\|_{\LM}^2 \le \|\mathring{F}\|_{\LI} \|\sqrt{\mathring{F}}v\|_{\LM}^2 \lesssim \varepsilon \norm(\tau),
\]
where we use the smallness~\eqref{E:SMALLDENSITY} of $\mathring{F}$.
The details can be found in~Prop.~\ref{P:VORTICITYESTIMATES}.
We also establish unweighted $L^2$ estimates for the spatial components of the vorticity of $\upeta$ up to top order as stated in part (2) of Prop.~\ref{P:VORTICITYESTIMATES}.
We again exploit the structure provided by~\eqref{E:LAGRANGIANENTHALPYEVOLUTION}, which upon applying $\Lagvort$, can be 
thought of as an effective transport equation for the quantity $\Lagvort\g_\tau\upeta$ with controllable source terms.
	
In Sect.~\ref{S:ENERGY}, we establish Step 2 of our strategy. 
We derive energy estimates for $\partial_{\tau} \upeta$, $D \upeta$, and the time-derivatives $\partial_{\tau}$ 
and horizontal-derivatives $\AngMD$ up to top order, as explained above.
A fundamental aspect of our energy identity, see~\eqref{E:ENERGYIDENTITY1}, is the occurrence of a large vorticity source term that 
\emph{can only be controlled because we have already obtained independent estimates for the vorticity} in the previous step.
Secondly, since the natural ``norm'' in the problem is dictated by the Lorentzian metric $\langle\cdot,\cdot\rangle_g$ we conduct our high-order energy estimates using the 
formulation~\eqref{E:LAGRNAGIANMAINEVOLUTIONEQUATION} and contract with respect to $g$. However, quantities of the form $\langle X,X\rangle_g$ are not necessarily positive definite and 
for that reason, we use the Riemannian metric $h$ introduce in \eqref{E:HMETRIC},  
and show that the difference
\[
\langle X,X\rangle_g - \langle X,X\rangle_h
\]
is of lower-order and therefore controllable by the fundamental theorem
of calculus.

Finally, in Sect.~\ref{S:ELLIPTIC}, we carry out Step 3.
That is, we derive control of the higher ``vertical'' derivatives $\partial_3$ of $\upeta$.
This is the most complicated part of the proof, and it relies on the vorticity bounds, the energy estimates, and the Hodge-type elliptic estimate (see Lemma~\ref{L:HODGEEST}),
which allows us to control the full Sobolev norm of a vectorfield 
from knowledge of the Sobolev norms of its vorticity, divergence, and its
tangential trace on the boundary.   
One of the main tasks
is to obtain control over derivatives of $\mbox{\upshape div} \upeta$ involving at least one vertical derivative,  
that is, over terms such as 
$\partial_3 \mbox{\upshape div} \upeta$
$\partial_3^2 \mbox{\upshape div} \upeta$, and their derivatives up to top order with respect to $\partial_{\tau}$. 
Once we have obtained such estimates, 
we can combine them with the vorticity estimates from Step 1  to recover Sobolev estimates for $\upeta$. This step is based on the identity 
$\partial_{\tau} \ln \mathscr{J} = \mbox{\upshape div} \upeta$, together with the use of the Euler equations to algebraically express $\partial_3 \partial_{\tau} \mathscr{J}$
in terms of quantities that already have a bound, and also on an important integration by parts argument already used in~\cite{CoutandShkoller2012, JangMasmoudi2015}, (see~Prop.~\ref{P:ELLIPTIC})  that crucially 
relies on the physical vacuum condition~\eqref{E:PHYSICALVACUUM}
\[
\partial_3 \mathring{F} \sim \text{ const.} < 0
\]
near the vacuum boundary $\lbrace y^3 = 0 \rbrace$.
				
Finally the proof of Theorem~\ref{T:MAIN} follows from establishing inequality~\eqref{E:POLYNOMIAL}, 
which follows from Steps 1--3.
		
It is worth noting that there is a strong analogy between the use of Lagrangian coordinates in the study of the Euler equations
and the use of an \emph{eikonal function}\footnote{Also known as an \emph{optical function}.} in the study
of quasilinear wave equations. Eikonal functions are fundamental objects in nonlinear geometric optics.
They be used to construct a sharp coordinate system, adapted to the true dynamic characteristics,
much like the Lagrangian coordinates employed in the present article.
Like Lagrangian coordinates, eikonal functions can allow one to derive sharp information about the solution 
that is not readily accessible via standard rectangular coordinates.
The first use of eikonal functions in the context of globally solving a nonlinear wave equation
is found in  \cite{dCsK1993}
on the global stability of Minkowski spacetime as a solution to Einstein's equations.
Eikonal functions have also been used as fundamental ingredients in the proofs
of local well-posedness at low regularity levels (see, for example, \cite{sKiR2003}
and the recent proof of the Bounded $L^2$ Curvature Conjecture \cite{sKiRjS2012})
and in proofs of the formation of shock waves in solutions to quasilinear wave equations 
in more than one space dimension (see, for example,  \cite{dC2007} as well as the additional works \cite{dCsM2012,jS2014,sMpY2014}).

\subsection{History of prior results}

Early developments of the theory of vacuum states can be traced back to~\cite{Lin1987,LiuSmoller1980}. 
One of the few rigorous results pertaining to the non-relativistic compactly supported compressible fluids can be found
in~\cite{Makino1986}, wherein the author constructs solutions in all of $ \mathbb{R}^3  $, but for which it is not possible to track
 the behavior of the vacuum boundary. 
 An extension of that result to relativistic fluids can be found in~\cite{Rendall1992}. 
 For the rich history of vacuum states 
for viscous Euler equations we refers the reader to the introduction of~\cite{CoutandShkoller2012}. 

Major advances in the development of the theory of compressible Euler equations with a free vacuum boundary occurred in the series of works~\cite{CoLiSh2010, CoutandShkoller2011, CoutandShkoller2012}  and independently in~\cite{JangMasmoudi2009, JangMasmoudi2015}, wherein local-in-time well-posedness was established in the Newtonian setting. See also \cite{Oliynyk2012} for the one-dimensional setting.

For existence theorems for the multi-dimensional compressible Euler equations modeling  {\em liquids}, for which the fluid density $\mathring \rho$ is uniformly bounded from below by a strictly positive constant, we refer the reader to \cite{Lindblad2005} and \cite{Trakhinin2009} which employ
a Nash-Moser iteration,  and to  \cite{CoutandHoleShkoller2013} for uniform estimates without derivative loss, as well as  the degenerate limit of  vanishing surface tension.  For a priori estimates for the relativistic liquid, see the preprint \cite{Oliynyk2015}.

The compressible liquid, unlike the compressible gas, with vacuum boundary is 
a uniformly hyperbolic system of conservation laws, and hence does not suffer from the
fundamental degeneracy caused by the physical vacuum boundary.
We refer the reader to the introduction of \cite{CoutandShkoller2012} for a more complete set of references to prior work on the physical vacuum boundary.

Finally, we refer the reader to the recent preprint  \cite{JangLeflochMasmoudi2015}  which employs a different strategy to obtain
a priori estimates for the relativistic Euler equations.

\section{Technical Lemmas}
\label{S:BASICINEQUALITIES}

In this section, we provide some technical lemmas that we use
throughout the remainder of the paper.

\subsection{Hardy-type inequality and Sobolev embeddings}
We shall make use of the following higher-order Hardy-type inequality established in \cite{CoutandShkoller2011,CoutandShkoller2012}.
\begin{lemma}[\textbf{Higher-order Hardy-type inequality}]
\label{L:HARDY}
Let $s\ge1$, $s\in\mathbb N$ and assume that $f\in H^s(\mathfrak{M})\cap \dot H^1_0(\mathfrak{M})$.
Assume that the Euclidean distance function $d(\cdot) =d(\cdot,\g\mathfrak{M}) \in H^r(\mathfrak{M})$, $r=\max(s-1,3)$.
Then $ \frac{f(\cdot)}{d(\cdot,\g\mathfrak{M})}\in H^{s-1}(\mathfrak{M})$ and 
\be\label{E:HARDY}
\left\|
	\frac{f}{d}
\right\|_{H^{s-1}(\mathfrak{M})} \lesssim \|f\|_{H^s(\mathfrak{M})}.
\ee
\end{lemma}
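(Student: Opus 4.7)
The plan is to prove the inequality using an explicit integral representation of $f/d$, exploiting the flat boundary of the reference domain $\mathfrak{M} = \mathbb{T}^2\times[0,1]$. Using a smooth partition of unity subordinate to a cover of $\mathfrak{M}$ by a neighborhood of $\{y^3=0\}$, a neighborhood of $\{y^3=1\}$, and a relatively compact interior region, we reduce to three pieces. On the interior piece, $d$ is bounded away from zero, $1/d$ is smooth, and the Leibniz rule gives the inequality trivially; by symmetry between the two boundary components, it suffices to prove the bound for a function $\tilde f$ supported in $\{y^3\le 1/2\}$ and vanishing on $\{y^3=0\}$, where $d(y) = y^3$.

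On this piece, the fundamental theorem of calculus applied to $\tilde f$ in the $y^3$ variable, combined with the rescaling $s=ty^3$, yields the identity
\begin{equation*}
\frac{\tilde f(y^1,y^2,y^3)}{y^3}
= \int_0^1 (\partial_3 \tilde f)(y^1,y^2,ty^3)\,dt.
\end{equation*}
Since horizontal derivatives $\partial_1,\partial_2$ commute with the integral and each vertical derivative pulls out a factor of $t$, we obtain
\begin{equation*}
\partial_1^a\partial_2^b\partial_3^c(\tilde f/y^3)(y) = \int_0^1 t^c\,(\partial_1^a\partial_2^b\partial_3^{c+1}\tilde f)(y^1,y^2,ty^3)\,dt
\end{equation*}
for any multi-index $(a,b,c)$ with $a+b+c\le s-1$.

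Taking $L^2(\mathfrak{M})$ norms, applying Minkowski's integral inequality, and using the change of variable $u = ty^3$ (which contributes the Jacobian factor $t^{-1/2}$ to the $L^2$ norm of each slice) yields
\begin{equation*}
\|\partial_1^a\partial_2^b\partial_3^c(\tilde f/y^3)\|_{L^2(\mathfrak{M})}
\le \left(\int_0^1 t^{c-1/2}\,dt\right)\, \|\partial_1^a\partial_2^b\partial_3^{c+1}\tilde f\|_{L^2(\mathfrak{M})}
= \frac{1}{c+1/2}\,\|\partial_1^a\partial_2^b\partial_3^{c+1}\tilde f\|_{L^2(\mathfrak{M})}.
\end{equation*}
The integral $\int_0^1 t^{c-1/2}\,dt$ converges for every $c\ge 0$, so no separate treatment of the horizontal-only case is needed. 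Since $a+b+c+1 \le s$, the right-hand side is bounded by $\|f\|_{H^s(\mathfrak{M})}$; summing over the finitely many multi-indices with $|\alpha|\le s-1$ gives the desired estimate.

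The main subtlety, which does not arise for the flat domain $\mathfrak{M}$ at hand, would appear in the more general setting allowed by the lemma statement, namely a Lipschitz domain whose distance function is only $H^r$-regular. One would then need to flatten the boundary via coordinate charts and carefully control derivatives of $1/d$ via Leibniz combined with Moser-type and weighted Sobolev estimates, which is precisely where the hypothesis $r=\max(s-1,3)$ enters. For our flat reference domain, $d$ coincides with a coordinate function near each boundary component, so no such machinery is required and the proof reduces to the one-variable scaling argument above.
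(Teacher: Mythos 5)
The paper gives no proof of Lemma~\ref{L:HARDY} — it is quoted from Coutand and Shkoller — so your elementary scaling argument is a genuine alternative rather than a reconstruction. Your treatment of the boundary pieces is clean and correct: the averaged fundamental-theorem representation $\tilde f/y^3=\int_0^1\partial_3\tilde f(\cdot,ty^3)\,dt$, Minkowski's integral inequality, and the $t^{-1/2}$ Jacobian from $u=ty^3$ combine to give $\|\partial_1^a\partial_2^b\partial_3^c(\tilde f/y^3)\|_{L^2}\le(c+\tfrac12)^{-1}\|\partial_1^a\partial_2^b\partial_3^{c+1}\tilde f\|_{L^2}$, and the convergence of $\int_0^1 t^{c-1/2}\,dt$ for all $c\ge0$ is exactly the right observation.

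There is, however, a real gap on the interior piece, and — contrary to your closing remark — the subtlety you attribute only to the ``more general setting'' is already present for $\mathfrak{M}=\mathbb{T}^2\times[0,1]$. The Euclidean distance to $\partial\mathfrak{M}$ is $d(y)=\min(y^3,1-y^3)$, which has a crease along $\{y^3=1/2\}$. Thus $1/d$ is Lipschitz but not $C^1$ in the interior: $\partial_3(1/d)$ jumps from $-4$ to $+4$ across the mid-slab, $\partial_3^2(1/d)$ contains a surface Dirac measure, and $1/d\notin H^2$ locally. For $s-1\ge2$ the claim that ``$1/d$ is smooth, and the Leibniz rule gives the inequality trivially'' on the interior piece therefore fails, and no choice of interior cutoff avoids $\{y^3=1/2\}$, since the two boundary neighbourhoods cannot cover it smoothly without overlapping there. (This is also exactly why the literal hypothesis $d\in H^r$, $r=\max(s-1,3)$, is not actually satisfied by the distance function on this domain; the statement is slippery as inherited.) The standard fix is to prove the estimate with a fixed smooth positive weight $w$ that equals $y^3$ near $y^3=0$, equals $1-y^3$ near $y^3=1$, and is bounded above and below in between: your boundary argument plus a now-legitimate interior Leibniz estimate yields $\|f/w\|_{H^{s-1}}\lesssim\|f\|_{H^s}$, which — via $\mathring{F}\sim w$, see \eqref{E:VACUUMBOUNDARY} — is what the paper actually uses. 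As written, though, your interior step does not close for $s\ge3$.
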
	
\begin{definition}
\label{D:WEIGHTEDSOBOLEV} For $k=1$ or $2$, 
the weighted Sobolev spaces $H^1_{d^k}(\mathfrak{M})$ is equipped with the norm
\[
\|f\|_{H^1_{d^k}(\mathfrak{M})}^2:= \int_{\mathfrak{M}} d(y)^k\left(|f(y)|^2 + |\MD f(y)|^2\right)\,dy.
\]
\end{definition}
The following well-known inequality is established in \cite{Kufner}.
\begin{lemma}
\label{L:WEIGHTEDEMBEDDING}
For $k=1$ or $2$, the weighted Sobolev space $H^1_{d^k}(\mathfrak{M})$ 
embeds continuously into the unweighted Sobolev space $H^{1-\frac{k}{2}}(\mathfrak{M})$
and  
\begin{align} \label{E:WEIGHTEDEMBEDDING}
\|f\|_{H^{1-k/2}(\mathfrak{M})} 
\lesssim_{\mathring{M}} \|f\|_{H^1_{d^k}(\mathfrak{M})}.
\end{align}
\end{lemma}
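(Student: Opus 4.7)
The embedding is a classical result on weighted Sobolev spaces with distance-function weight, and my plan is to combine a localization argument near $\partial \mathfrak{M}$ with a Fourier transform in the tangential variables that reduces matters to a one-dimensional Hardy-type inequality. First, I choose a smooth partition of unity $1 = \chi_0 + \chi_1$ on $\mathfrak{M}$ with $\chi_1$ supported in a tubular neighborhood of $\partial \mathfrak{M}$ and $\chi_0$ supported in the interior. On the support of $\chi_0$ the weight $d^k$ is uniformly bounded below, so $\|\chi_0 f\|_{H^{1-k/2}} \lesssim \|\chi_0 f\|_{H^1} \lesssim_{\mathring{M}} \|f\|_{H^1_{d^k}}$ by the trivial inclusion $H^1 \hookrightarrow H^{1-k/2}$, and the interior piece is done.

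By Assumption~2, $\mathfrak{M} = \mathbb T^2 \times [0,1]$, so near the boundary component $\{y^3 = 0\}$ the distance function satisfies $d(y) \sim y^3$, and after extending $\chi_1 f$ by zero I can work in the model half-space $\mathbb T^2 \times (0, \infty)$. Taking the Fourier series in $(y^1, y^2) \in \mathbb T^2$ and writing $\widehat{f}(\xi, y^3)$ for the transform, Parseval yields
\begin{equation*}
\|\chi_1 f\|^2_{H^1_{d^k}} \sim \sum_{\xi \in \mathbb Z^2} \int_0^\infty (y^3)^k \bigl[(1+|\xi|^2)|\widehat f(\xi, y^3)|^2 + |\partial_3 \widehat f(\xi, y^3)|^2\bigr]\, dy^3,
\end{equation*}
while $\|\chi_1 f\|^2_{H^{1-k/2}} \sim \sum_{\xi} (1+|\xi|^2)^{1-k/2} \int_0^\infty |\widehat f(\xi, y^3)|^2\, dy^3$. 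The desired estimate therefore reduces, frequency by frequency with $\lambda := (1+|\xi|^2)^{1/2} \ge 1$, to the one-dimensional weighted inequality
\begin{equation*}
\lambda^{2-k}\int_0^\infty |g(y)|^2 \, dy \lesssim \int_0^\infty y^k \bigl[\lambda^2 |g|^2 + |g'|^2\bigr]\, dy,
\end{equation*}
which, after the substitution $y \mapsto y/\lambda$, is equivalent to the $\lambda$-independent inequality $\int_0^\infty |h|^2 \, du \lesssim \int_0^\infty u^k(|h|^2 + |h'|^2) \, du$.

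This last inequality I would prove for smooth compactly supported $h$ by the integration-by-parts identity $\int_0^\infty |h|^2\, du = -2\int_0^\infty u\, h\, h'\, du$, followed by Cauchy--Schwarz with the weight split as $u \cdot h \cdot h' = h \cdot (u\, h')$ when $k=2$ and as $u \cdot h \cdot h' = (u^{1/2} h)(u^{1/2} h')$ when $k=1$, which yields $\int |h|^2 \le 4\int u^2 |h'|^2$ in the first case and $\int |h|^2 \le \int u(|h|^2 + |h'|^2)$ in the second. A density argument then extends the inequality to the full weighted space, and summation over $\xi$ together with the interior piece closes the proof. The main obstacle is the endpoint $k=2$: the weight $u^2$ lies outside the Muckenhoupt $A_2$ class, so standard weighted Sobolev theory does not immediately apply, and the integration-by-parts trick requires that the boundary contribution $[u|h|^2]_0^\infty$ vanishes, which must be justified through the density argument in the weighted space $H^1_{d^2}$.
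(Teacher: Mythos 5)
The paper gives no proof of this lemma at all — it simply cites Kufner's book on weighted Sobolev spaces — so your proposal is a genuinely new argument rather than a reconstruction, and it is worth examining on its own terms. Your treatment of the interior piece via the partition of unity is fine, and the $k=2$ case is handled correctly: Parseval gives exactly $\|\chi_1 f\|_{L^2}^2 \sim \sum_\xi \int_0^\infty |\widehat f(\xi,y^3)|^2\,dy^3$, the scaling $y\mapsto y/\lambda$ is carried out correctly, and the resulting one-dimensional estimate $\int|h|^2\,du \le 4\int u^2|h'|^2\,du$ is the standard half-line Hardy inequality, valid once the boundary term $[u|h|^2]_0^\infty$ is disposed of by density; you correctly flag this last point as the delicate step. (For a more self-contained treatment you could note directly that any $h$ with $\int u^2(|h|^2+|h'|^2)<\infty$ satisfies $|h(u)|\lesssim u^{-1/2}$ near $0$, so $u|h(u)|^2$ is bounded, and then argue with truncations.)

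The $k=1$ case, however, contains a genuine gap. The step that breaks down is the asserted Parseval-type equivalence
$\|\chi_1 f\|^2_{H^{1/2}} \sim \sum_\xi (1+|\xi|^2)^{1/2}\int_0^\infty |\widehat f(\xi,y^3)|^2\,dy^3$.
This is not the $H^{1/2}$ norm of a function on $\mathbb T^2\times(0,\infty)$: it is only the \emph{tangential} (anisotropic) part, $\|(1-\Delta_{\rm tan})^{1/4}f\|_{L^2}^2$. The full $H^{1/2}$ norm also controls half a derivative in the normal variable $y^3$; after the tangential Fourier transform it is comparable to $\sum_\xi\bigl[(1+|\xi|^2)^{1/2}\|\widehat f(\xi,\cdot)\|_{L^2(0,\infty)}^2 + \|\widehat f(\xi,\cdot)\|_{\dot H^{1/2}(0,\infty)}^2\bigr]$. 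To see that the two expressions are inequivalent, take $f$ depending only on $y^3$: then every $\widehat f(\xi,\cdot)$ with $\xi\ne 0$ vanishes and your right-hand side reduces to $\|f\|_{L^2}^2$, whereas the left-hand side is the full $H^{1/2}$ norm of a one-variable function, which is strictly stronger. Your frequency-by-frequency reduction and scaling argument therefore only prove the weaker bound $\|(1-\Delta_{\rm tan})^{1/4}f\|_{L^2}\lesssim\|f\|_{H^1_d}$; the remaining piece, a one-dimensional estimate of the form $\|g\|_{\dot H^{1/2}(0,\infty)}^2 \lesssim \int_0^\infty y(|g|^2+|g'|^2)\,dy$, is precisely the nontrivial part of the $k=1$ embedding and is not addressed. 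It cannot be recovered by the same integration-by-parts trick, because after scaling there is no parameter $\lambda$ left to trade against; one needs an argument that genuinely produces fractional normal regularity, for instance a real-interpolation argument between the $k=0$ and $k=2$ endpoints (using $[L^2_{d^2},L^2]_{1/2,2}=L^2_d$ together with $[L^2,H^1]_{1/2,2}=H^{1/2}$), or a direct estimate of the Gagliardo seminorm in $y^3$. You should either fill this step or restrict the claim to $k=2$, which is in fact the only case the paper ultimately uses.
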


\subsection{Elliptic estimates}

\begin{lemma}[\textbf{Hodge-type elliptic estimate}]
\label{L:HODGEEST}
Let $s \geq 1$ be an integer and let
$Y = \sum_{a=1}^3Y^a \frac{\partial}{\partial x^a}$
be an $\mathbb{R}^3-$valued vectorfield defined along
$\mathfrak{M}$.
Then the following Hodge-type estimate holds:
\begin{align} \label{E:HODGEEST}
\| Y \|_{H^s(\mathfrak{M})} 
\lesssim 
\| Y \|_{L^2(\mathfrak{M})} 
+ \|\Flatdiv Y \|_{H^{s-1}(\mathfrak{M})} 
+ \| \Flatvort Y\|_{H^{s-1}(\mathfrak{M})} 
+ \sum_{a=1}^2 \|\AngMD Y^a \|_{H^{s-3/2}(\partial \mathfrak{M})}.
\end{align}
\end{lemma}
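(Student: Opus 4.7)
The plan is to prove the estimate by induction on $s$, combining an integration-by-parts identity that recovers horizontal derivatives with an algebraic identity that trades pairs of vertical derivatives for divergence, vorticity, and horizontal derivatives. The underlying vector identity is
\[
\partial_3^2 Y^a = \partial_a (\Flatdiv Y) + \sum_{K=1}^{3}\partial_{K}(\Flatvort Y)_{Ka} - \sum_{c=1}^{2}\partial_c^2 Y^a, \qquad a=1,2,3,
\]
which follows from the Euclidean identity $\Delta Y^a = \partial_a\Flatdiv Y + \partial_K(\Flatvort Y)_{Ka}$ by isolating the $\partial_3^2$ term.

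First I would prove the base case $s=1$. A direct integration by parts yields
\[
\int_{\mathfrak{M}} |\MD Y|^2\,dy
= \int_{\mathfrak{M}} |\Flatdiv Y|^2\,dy + \tfrac{1}{2}\int_{\mathfrak{M}} |\Flatvort Y|^2\,dy + \mathcal{B},
\]
where the boundary remainder arising from the cross term $\int \partial_i Y^j \partial_j Y^i\,dy$ has the form $\mathcal{B} = \pm\int_{\partial\mathfrak{M}}(Y^i \partial_i Y^3 - Y^3\Flatdiv Y)\,dS$ on each boundary component. The key algebraic cancellation is that $Y^3\partial_3 Y^3$ appears in both summands with opposite signs, reducing $\mathcal{B}$ to a sum of boundary integrals involving only the horizontal combination $Y^a\partial_a Y^3 - Y^3\partial_a Y^a$ with $a=1,2$. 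Integrating once more by parts along the closed boundary torus $\partial\mathfrak{M}\simeq\mathbb{T}^2$ and then applying the trace theorem with an $\varepsilon$-absorption bounds $\mathcal{B}$ by $C_\varepsilon\bigl(\|Y\|_{L^2(\mathfrak{M})}^2 + \sum_{a=1}^2\|\AngMD Y^a\|_{H^{-1/2}(\partial\mathfrak{M})}^2\bigr) + \varepsilon\|\MD Y\|_{L^2(\mathfrak{M})}^2$, and absorbing the $\varepsilon$-term closes the case $s=1$.

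Next I would carry out the induction. Assuming the estimate at level $s-1$, applying $\AngMD$ to $Y$ and running the base-case argument on $\AngMD Y$ controls the portion of $\|Y\|_{H^s(\mathfrak{M})}$ containing at least one horizontal derivative, using that $\AngMD$ commutes with $\Flatdiv$ and $\Flatvort$ and with the relevant traces. To recover purely vertical derivatives, I would invoke the algebraic identity above: any pair of vertical derivatives on $Y$ can be exchanged for one derivative of $\Flatdiv Y$, one derivative of $\Flatvort Y$, and two horizontal derivatives, all of which are already controlled by the inductive hypothesis and the RHS of \eqref{E:HODGEEST}. Iterating handles all multi-indices $(i_1,i_2,i_3)$ with $i_1+i_2+i_3\le s$.

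The main obstacle is the careful handling of the boundary trace: only $\AngMD Y^a$ for $a=1,2$ is given as trace data on the RHS, whereas a naive integration by parts on $|\MD Y|^2$ produces a boundary contribution involving the normal derivative $\partial_3 Y^3$. The cancellation of this term between the divergence piece and the cross-derivative piece, followed by integration by parts along the closed boundary torus to move all remaining derivatives onto horizontal components of $Y$, is the decisive algebraic step. Once this reduction is in place, the remainder of the argument is a routine bootstrap via the trace theorem and the Euclidean elliptic identity, and the explicit product structure $\mathfrak{M}=\mathbb{T}^2\times[0,1]$ ensures there are no corner or interface issues to address.
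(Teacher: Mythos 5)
The paper does not actually prove Lemma~\ref{L:HODGEEST}; it simply cites Proposition~6.2 of Coutand--Shkoller \cite{CoutandShkoller2012}, so there is no in-paper argument to compare against. Your blind sketch is, however, essentially the standard proof of such a div--curl estimate with partial (tangential-only) boundary data, and it appears sound. The base case is correct: integration by parts gives
\[
\int_{\mathfrak{M}}|\MD Y|^2\,dy=\int_{\mathfrak{M}}|\Flatdiv Y|^2\,dy+\tfrac12\int_{\mathfrak{M}}|\Flatvort Y|^2\,dy+\mathcal{B},
\]
with $\mathcal{B}$ involving $Y^j\partial_jY^3-Y^3\Flatdiv Y$ on the flat boundary, the $Y^3\partial_3Y^3$ pieces do cancel, and integrating once more along $\mathbb{T}^2$ followed by the $H^{1/2}\text{--}H^{-1/2}$ duality and the trace theorem puts the remaining derivative on the horizontal components $Y^a$, $a=1,2$, as required by the statement. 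The identity $\partial_3^2Y_a=\partial_a\Flatdiv Y+\partial_K(\Flatvort Y)_{Ka}-\partial_1^2Y_a-\partial_2^2Y_a$ is a correct consequence of the vector Laplacian identity with the paper's $\Flatvort$, and it gives the right mechanism for reducing the number of $\partial_3$'s by two while picking up controlled quantities.

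The only place that would benefit from tightening is the induction scheme: as phrased, ``running the base-case argument on $\AngMD Y$'' does not by itself control all of $\|Y\|_{H^s}$ with at least one horizontal derivative for general $s$. The clean version is a nested induction: for fixed $s$, induct on the number $m$ of $\partial_3$ derivatives in a multi-index of total order $s$; the cases $m=0,1$ come from applying the inductive hypothesis at level $s-1$ (or the base case applied to $\AngMD^{s-1}Y$), noting that $\Flatdiv$, $\Flatvort$, and the tangential trace all commute with $\AngMD$ at the cost of one derivative in the corresponding norms; the cases $m\geq2$ then follow from the elliptic identity, each application reducing $m$ by two at the cost of terms already controlled. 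With that bookkeeping spelled out, the argument closes, so this is a gap in exposition rather than in the underlying idea.
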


\begin{proof}
	See Proposition 6.2 in~\cite{CoutandShkoller2012}.
\end{proof}

\subsection{Trace Estimates}

\begin{lemma}[\textbf{Tangential trace inequality}]
	\label{L:TANGENTIALTRACE}
	Let $Y$ be as in Lemma~\ref{L:HODGEEST}. Then the following trace estimate holds: 
	\begin{align} \label{E:TANGENTIALTRACE}
		\sum_{a=1}^2 \|\AngMD Y^a \|_{H^{-0.5}(\partial \mathfrak{M})}
		& \lesssim
			\|\Flatvort Y \|_{L^2(\mathfrak{M})} 
			+ 
			\|\AngMD Y \|_{L^2(\mathfrak{M})}.
	\end{align}
\end{lemma}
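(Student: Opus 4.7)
The plan is to argue by duality, lifting a test function on $\partial \mathfrak{M}$ into the interior and then using integration by parts in the tangential directions together with the vorticity identity in order to rewrite the normal derivative of horizontal components of $Y$ in a controllable way. Concretely, for each fixed $a, A \in \{1, 2\}$, I would compute
\[
\|\partial_A Y^a\|_{H^{-1/2}(\partial \mathfrak{M})} = \sup \left\{ \int_{\partial \mathfrak{M}} (\partial_A Y^a) \, \phi \, dS : \phi \in H^{1/2}(\partial \mathfrak{M}), \, \|\phi\|_{H^{1/2}(\partial \mathfrak{M})} \le 1 \right\}.
\]
Splitting the boundary integral into contributions from $\{y^3 = 0\}$ and $\{y^3 = 1\}$ (and handling them symmetrically), I would choose a standard extension $\Phi \in H^1(\mathfrak{M})$ of $\phi$ that is supported near the relevant component of $\partial\mathfrak{M}$ and satisfies $\|\Phi\|_{H^1(\mathfrak{M})} \lesssim \|\phi\|_{H^{1/2}(\partial \mathfrak{M})}$.

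Next, on the component $\{y^3 = 0\}$ (the outward normal being $-\partial_3$), I would apply the divergence theorem to write
\[
\int_{\partial \mathfrak{M}|_{y^3=0}} (\partial_A Y^a) \phi \, dS = -\int_{\mathfrak{M}} \partial_3 \left[ (\partial_A Y^a) \Phi \right] dy = -\int_{\mathfrak{M}} (\partial_3 \partial_A Y^a) \Phi \, dy - \int_{\mathfrak{M}} (\partial_A Y^a)(\partial_3 \Phi) \, dy.
\]
Since $A \in \{1, 2\}$ is tangent to $\partial \mathfrak{M}$ and $\mathbb{T}^2$ has no boundary, I can integrate by parts in $A$ in the first integral, producing
\[
-\int_{\mathfrak{M}} (\partial_3 \partial_A Y^a) \Phi \, dy = \int_{\mathfrak{M}} (\partial_3 Y^a)(\partial_A \Phi) \, dy.
\]
At this step the crucial manipulation is to use the flat vorticity identity, valid for $a \in \{1, 2\}$,
\[
\partial_3 Y^a = \partial_a Y^3 + (\Flatvort Y)_{3 a},
\]
which follows directly from the definition \eqref{E:FLATVORT}. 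This replaces the uncontrolled normal derivative $\partial_3 Y^a$ by the horizontal derivative $\partial_a Y^3$ (a component of $\AngMD Y$) and a component of $\Flatvort Y$.

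Substituting and applying Cauchy--Schwarz then yields
\[
\left|\int_{\partial \mathfrak{M}|_{y^3=0}} (\partial_A Y^a) \phi \, dS\right|
\lesssim \left( \|\AngMD Y\|_{L^2(\mathfrak{M})} + \|\Flatvort Y\|_{L^2(\mathfrak{M})} \right) \|\Phi\|_{H^1(\mathfrak{M})},
\]
and the extension bound $\|\Phi\|_{H^1(\mathfrak{M})} \lesssim \|\phi\|_{H^{1/2}(\partial \mathfrak{M})}$ finishes this piece. The same argument at $\{y^3 = 1\}$, followed by taking the supremum over $\phi$ and summing over $a, A \in \{1, 2\}$, yields \eqref{E:TANGENTIALTRACE}. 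The main substantive step is the use of the vorticity identity: absent that, only $\partial_3 Y^a$ would be available, and there is no direct interior $L^2$ control of this quantity in the hypotheses; the identity trades $\partial_3 Y^a$ for precisely the two quantities that \emph{do} appear on the right-hand side of \eqref{E:TANGENTIALTRACE}.
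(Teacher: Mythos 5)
Your proof is correct, and it supplies an argument where the paper itself gives none: the paper's ``proof'' of Lemma~\ref{L:TANGENTIALTRACE} is a citation to Lemma~4 of \cite{CoutandShkoller2012}, so there is no in-text argument to compare against. Your route — duality against $H^{1/2}(\partial\mathfrak{M})$, a supported $H^1$ extension $\Phi$, integration by parts in $y^3$ followed by integration by parts in the torus directions, and then the decisive substitution
\[
\partial_3 Y^a = \partial_a Y^3 + (\Flatvort Y)_{3a}, \qquad a\in\{1,2\},
\]
to trade the normal derivative for controlled quantities — is exactly the standard mechanism for this type of tangential trace estimate and is almost certainly what the cited reference does. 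Each step checks out: the sign conventions on the two boundary components are handled by localizing the extension; the absence of boundary terms in the $\partial_A$ integration by parts follows from periodicity in $y^1,y^2$; the vorticity identity is an immediate consequence of \eqref{E:FLATVORT} with $K=3$, $j=a$; and both $\partial_a Y^3$ (a component of $\AngMD Y$) and $(\Flatvort Y)_{3a}$ are covered by the right-hand side of \eqref{E:TANGENTIALTRACE}, as is the $\int (\partial_A Y^a)(\partial_3 \Phi)\,dy$ term produced in the first integration by parts. The resulting bound, after taking the supremum over $\phi$ and summing over $a,A\in\{1,2\}$, is precisely \eqref{E:TANGENTIALTRACE}.
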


\begin{proof}
See Lemma 4 in~\cite{CoutandShkoller2012}.
\end{proof}

\section{Bootstrap Assumptions and Basic Estimates}
\label{SS:BOOTSTRAPASSUMPTIONSANDBASICESTIMATES}

\subsection{Bootstrap assumptions}
\label{SS:BOOTSTRAPASSUMPTIONS}
In the rest of the paper, we assume that 
$\upeta:[0,T]\times\mathfrak{M}\to\mathbb R^{1+3}$ is a smooth solution to~\eqref{E:INITIALFUNCTTION}-~\eqref{E:NORMALIZATIONLAGRANGIAN}.
Moreover we assume that the following bootstrap assumptions hold on $[0,T]\times\mathfrak{M}$:
\begin{subequations}
\begin{align} 
		f & \leq \frac{1}{8},
			\label{E:BOOTSTRAPNUMBERDENSITY} \\
		\|\upeta\|_{H^{3.5}(\mathfrak{M}_\tau)} 
		&  \le 2\|\mathring\upeta\|_{H^{3.5}(\mathfrak{M})} + 1,
		\label{E:BOOTSTRAPETA}
			\\
			\|\g_\tau^a v\|_{H^{3-\frac a2}(\mathfrak{M}_\tau)} & \le \|\g_\tau^a v|_{\tau=0}\|_{H^{3-\frac a2}(\mathfrak{M})} + 1, \label{E:VELOCITYBOOTSTRAP}\\
		\mathring{v}^0 - \frac{1}{2}
		& \leq \mathscr{J} 
			\leq 
			\mathring{v}^0 + \frac{1}{2},
			\label{E:BOOTSTRAPJACOBIANPOSITIVITY} \\
		1/4 + (\mathring{v}^1)^2 + (\mathring{v}^2)^2 
		& \leq g^{\mu \nu} a^3_{\mu} a^3_{\nu} 
			\leq 2 + (\mathring{v}^1)^2 + (\mathring{v}^2)^2.
			\label{E:BOOTSTRAPCOF3ONEFORMPOSITIVITY}
\end{align}
\end{subequations}

We use the bootstrap assumptions 
\eqref{E:BOOTSTRAPNUMBERDENSITY}-\eqref{E:BOOTSTRAPCOF3ONEFORMPOSITIVITY}
throughout the paper, 
often incorporating them into the generic data-dependent parameter
{\color{DarkOrchid} $\mathring{M}$}. 
Once the energy estimates are established, it is easy to
derive strict improvements of the bootstrap assumptions; 
see Lemma~\ref{L:IMPROVMENTOFBOOTSTRAP}.

\subsection{Positivity of the metric $h$}
\label{SS:HISPOSITIVEDEFINITE}

We now establish quantitative positivity estimates for the metric $h$ defined in \eqref{E:HMETRIC}.

\begin{lemma}[\textbf{Positivity of the Riemannian metric}]
	\label{L:POSITIVITYOFH}
	Recall that $\D$ denotes the flat connection of the Minkowksi metric
	and that $\ITIMESMD$ denotes the Lagrangian coordinate spacetime gradient.
	Let $h$ be the Riemannian metric defined in \eqref{E:HMETRIC},
	and let $X$ and $Y$ be vectorfields. There exists a constant $\mathring{M} > 0$,
	depending on the data, such that under the bootstrap assumptions of Sect.~\ref{SS:BOOTSTRAPASSUMPTIONS}, 
	we have
	\begin{subequations}
	\begin{align}  \label{E:HMETRICPOSITIVITY}
		\frac{1}{\mathring{M}}
		\sum_{\alpha = 0}^3 |X^{\alpha}|^2 
		& \leq
		\langle X \rangle_h
		:= h_{\alpha \beta} X^{\alpha} X^{\beta} 
		\leq \mathring{M}
		\sum_{\alpha = 0}^3 |X^{\alpha}|^2,
			\\
		\frac{1}{\mathring{M}}
		|\ITIMESMD Y| 
		& \leq
		 |\Lagdiff Y|
		\leq
		\mathring{M}
		|\ITIMESMD Y|.
		\label{E:CONNECTIONCOMPARISON}
	\end{align}
	\end{subequations}
\end{lemma}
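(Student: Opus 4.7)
The plan is to prove the two estimates independently. For \eqref{E:HMETRICPOSITIVITY}, the idea is to diagonalize the bilinear form $h$ by exploiting the normalization condition \eqref{E:NORMALIZATIONLAGRANGIAN}. Given any $X = (X^0,X^1,X^2,X^3)$, I would split it into its $v$-parallel part $X_\parallel := -(v_\alpha X^\alpha)\, v$ and its $g$-orthogonal complement $X_\perp := X - X_\parallel$, which satisfies $v_\alpha X_\perp^\alpha = 0$. Since $g(v,v) = -1$, a short computation gives
\begin{equation*}
g_{\alpha\beta} X^\alpha X^\beta = -(v_\alpha X^\alpha)^2 + g_{\alpha\beta} X_\perp^\alpha X_\perp^\beta,
\end{equation*}
so that by the very definition \eqref{E:HMETRIC} of $h$,
\begin{equation*}
\langle X, X\rangle_h = (v_\alpha X^\alpha)^2 + g_{\alpha\beta} X_\perp^\alpha X_\perp^\beta.
\end{equation*}
The second term is non-negative because $X_\perp$ is $g$-orthogonal to the timelike vector $v$, hence spacelike. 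This shows positive definiteness at the pointwise level.

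To upgrade positivity to the quantitative two-sided bound, I would represent $h$ as a symmetric $4\times 4$ matrix and bound its eigenvalues by a constant depending only on $|v|_{\rm eucl}^2 := \sum_{\alpha=0}^3 (v^\alpha)^2$. The upper bound is immediate: $|\langle X,X\rangle_h| \leq (1 + 2|v|_{\rm eucl}^2)\sum_\alpha |X^\alpha|^2$. For the lower bound, one may use the decomposition above together with the elementary inequality $(a+b)^2 + c^2 \gtrsim_{|v|_{\rm eucl}} a^2 + b^2 + c^2$ applied to the two non-negative contributions, or alternatively compute $\det h$ explicitly and use the normalization to see that $\det h = 1$ (so the product of eigenvalues is bounded below), combined with the trace bound. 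Finally, the pointwise bound on $|v|_{\rm eucl}$ follows from the bootstrap assumption \eqref{E:VELOCITYBOOTSTRAP} at $a=0$ together with Sobolev embedding $H^3(\mathfrak{M}) \hookrightarrow L^\infty(\mathfrak{M})$, absorbed into the data-dependent constant $\mathring{M}$.

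For \eqref{E:CONNECTIONCOMPARISON}, the relation $\Lagdiff_\mu Y^\nu = \mathscr{A}_\mu^K \partial_K Y^\nu$ together with its inverse $\partial_K Y^\nu = \mathscr{M}_K^\alpha \Lagdiff_\alpha Y^\nu$ reduces the task to establishing uniform $L^\infty$ bounds on both $\mathscr{M}$ and $\mathscr{A}$. The matrix $\mathscr{M}_K^\alpha = \partial_K \upeta^\alpha$ is controlled in $L^\infty$ via Sobolev embedding applied to the bootstrap assumption \eqref{E:BOOTSTRAPETA}, using $H^{3.5}(\mathfrak{M}) \hookrightarrow W^{1,\infty}(\mathfrak{M})$. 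By Cramer's rule $\mathscr{A} = \mathscr{J}^{-1}\mathrm{Cof}(\mathscr{M})^\top$, and the cofactor entries are polynomials in $\partial \upeta$, so they inherit $L^\infty$ bounds from $\mathscr{M}$; the denominator $\mathscr{J}$ is bounded away from zero thanks to the bootstrap bound \eqref{E:BOOTSTRAPJACOBIANPOSITIVITY} (which gives $\mathscr{J} \geq \mathring{v}^0 - 1/2 \geq 1/2$ since $\mathring{v}^0 \geq 1$ by \eqref{E:NORMALIZATIONLAGRANGIAN}). Combining these gives the two-sided comparison with a constant depending only on the data.

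The main (very mild) obstacle is simply the bookkeeping: ensuring that the constant $\mathring{M}$ can truly be chosen uniformly in $\tau \in [0,T]$ under the bootstrap assumptions, and in particular that the $L^\infty$ bounds on $v$ and on $\partial \upeta$ are indeed captured by \eqref{E:VELOCITYBOOTSTRAP} and \eqref{E:BOOTSTRAPETA} through Sobolev embedding, with no hidden dependence on the size of $\norm(\tau)$ beyond initial data. Once this is confirmed, both estimates follow from the elementary linear-algebra arguments sketched above.
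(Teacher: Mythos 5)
Your proof of \eqref{E:HMETRICPOSITIVITY} is correct but takes a genuinely different route from the paper. The paper works entirely in coordinates: it writes $\langle X\rangle_h = -X_0^2 + |\underline X|^2 + 2(-X_0 v_0 + \underline X\cdot\underline v)^2$, substitutes $v_0 = -\sqrt{1+|\underline v|^2}$, normalizes by $|\underline X|$, and checks that the discriminant of the resulting quadratic in $t = X_0/|\underline X|$ is bounded below uniformly in $\varphi = \underline X/|\underline X|$; a symmetric normalization by $X_0$ gives the complementary bound. Your route via the $g$-orthogonal splitting $X = X_\parallel + X_\perp$, which exhibits $\langle X\rangle_h = (v_\alpha X^\alpha)^2 + g(X_\perp,X_\perp)$ with $X_\perp$ spacelike, is more geometric, and the observation that the matrix determinant lemma together with $g(v,v)=-1$ forces $\det h = \det(g)\bigl(1 + 2 v^\alpha v_\alpha\bigr) = 1$ is a clean way to convert positive-definiteness plus an upper eigenvalue bound (from the trace/entry-wise $L^\infty$ control of $v$) into a quantitative lower eigenvalue bound — the paper does not use this. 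Be a little careful with the first alternative you float, ``$(a+b)^2 + c^2 \gtrsim a^2+b^2+c^2$'': this inequality is false in general (take $a = -b$), and even the intended application requires comparing $g(X_\perp,X_\perp)$ to the Euclidean $|X_\perp|_{\rm eucl}^2$, which needs the constraint $v_\alpha X_\perp^\alpha = 0$ and a Cauchy–Schwarz step yielding $g(X_\perp,X_\perp)\ge |\underline X_\perp|^2/(v^0)^2$; the determinant argument is the safer one to keep. For \eqref{E:CONNECTIONCOMPARISON} your argument is essentially identical to the paper's (bounds on $\mathscr M$, $\mathscr A$ via boundedness of entries and strict positivity of $\mathscr J$). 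One small slip there: $\mathscr M_0^\alpha = \partial_\tau\upeta^\alpha = v^\alpha$ is \emph{not} controlled by \eqref{E:BOOTSTRAPETA} (which bounds only spatial Sobolev norms of $\upeta$), so the $L^\infty$ bound on the full matrix $\ITIMESMD\upeta$ needs \eqref{E:VELOCITYBOOTSTRAP} in addition to \eqref{E:BOOTSTRAPETA}.
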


\begin{proof}
Let $\underline X = (X_1,X_2,X_3)$ and $\underline v = (v_1,v_2,v_3)$.
Note that by definition
\begin{align*}
\langle X \rangle_h & = \langle X\rangle_g + 2 \langle v,X\rangle_g^2
= - X_0^2 + |\underline X|^2 + 2(-X_0v_0 + \underline X\cdot \underline v)^2 \\
& = X_0^2 (1+2|\underline v|^2) + |\underline X|^2 - 2X_0\sqrt{1+|\underline v|^2}\underline X\cdot\underline v +2 (\underline X\cdot \underline v)^2.
\end{align*}
Setting $\varphi:= \frac{\underline X}{|\underline X|}\in \mathbb \mathbb{S}^2$ and $t = \frac{X_0}{|\underline X|}$ we can rewrite $\langle X\rangle_h$ in the following way:
\begin{align}
\langle X \rangle_h = |\underline X|^2 \left( t^2(1+|v|^2) + 1 +2(\varphi\cdot \underline v)^2 - 2 t\sqrt{1+|\underline v|^2}\varphi\cdot \underline v)\right)
\end{align}
It is an easy exercise to check that the discriminant of the quadratic function in the parenthesis above is uniformly-in-$\varphi$ bounded from below by $0$.
In particular, the function has a uniform lower bound and we conclude that $\langle X \rangle_h \ge C |\underline X|^2$ for some $C$.
To obtain the lower bound $\langle X \rangle_h \ge C X_0^2$, 
we use similar analysis, but we divide by $X_0$
instead of $|\underline X|$.
The upper bound in~\eqref{E:HMETRICPOSITIVITY} is straightforward 
to derive and relies on the bootstrap assumption~\eqref{E:VELOCITYBOOTSTRAP}.

To prove the first inequality in \eqref{E:CONNECTIONCOMPARISON},
we first use \eqref{E:CHAINRULES} to deduce that
$$|\ITIMESMD Y|
\lesssim \sum_{A=0}^3 |\partial_A Y|
\lesssim \sum_{A=0}^3 \sum_{\alpha = 0}^3 |\partial_A \upeta^{\alpha}||\Lagdiff Y|\,.
$$
We then obtain the desired bound
$
|\partial_K \upeta^{\alpha}|
\leq \mathring{M}
$
from the bootstrap assumptions.
Similarly, to obtain the second inequality
in \eqref{E:CONNECTIONCOMPARISON},
we use \eqref{E:CHAINRULES} to find that
$|\Lagdiff Y| \lesssim \sum_{\alpha = 0}^3 |\frac{\partial}{\partial x^{\alpha}} Y|
\lesssim \sum_{A=0}^3 \sum_{\alpha = 0}^3 |\mathscr{A}_{\alpha}^A \partial_A Y|
\lesssim \sum_{A=0}^3 \sum_{\alpha = 0}^3 |\mathscr{A}_{\alpha}^A| |\ITIMESMD Y| \,,
$
from which it follows that
$|\mathscr{A}_{\alpha}^A| \leq \mathring{M}$
by using the bootstrap assumptions,
since $\mathscr{A}$ is the inverse of the $4 \times 4$ matrix
$\ITIMESMD \upeta$, which has bounded entries and
a strictly positive Jacobian determinant.
\end{proof}

\subsection{Improvement of the bootstrap assumptions}
	
	The next lemma becomes relevant at the end of the paper,
	after we have derived our main a priori estimate for
	$\norm(\tau)$ under the bootstrap assumptions of Sect.~\ref{SS:BOOTSTRAPASSUMPTIONS}.
	The lemma supplies improvements of the bootstrap assumptions, thus closing the proof.
	
	\begin{lemma}[\textbf{Improvement of the bootstrap assumptions}]
	\label{L:IMPROVMENTOFBOOTSTRAP}
	Assume that the bootstrap assumptions of Sect.~\ref{SS:BOOTSTRAPASSUMPTIONS} and the
	estimate $\norm(\tau) \leq \mathring{M}$
	all hold for $\tau$ sufficiently small. 
	Then, after possibly shrinking the allowable
	smallness of $\tau$, we have the following
	improvements of the bootstrap assumptions
	\begin{subequations}
	\begin{align} 
		f & \leq \frac{1}{6},
			\label{E:NUMBERDENSITYNOTTOOBIG} \\
			\|\upeta\|_{H^{3.5}(\mathfrak{M}_\tau)} 
		&  \le 2\|\mathring\upeta\|_{H^{3.5}(\mathfrak{M})} + \frac{1}{2},
		\label{E:BOOTSTRAPETABETTER}
			\\
			\|\g_\tau^a v\|_{H^{3-\frac a2}(\mathfrak{M}_\tau)} & \le \|\g_\tau^a v|_{\tau=0}\|_{H^{3-\frac a2}(\mathfrak{M})} + \frac{1}{2}, \label{E:VELOCITYBOOTSTRAPBETTER}\\
\label{E:JACOBIANPOSITIVITY}
		\mathring{v^0} - \frac{1}{4} 
		& \leq \mathscr{J} 
			\leq 
			\mathring{v^0} + \frac{1}{4},
			\\
		\frac{1}{2} + (\mathring{v}^1)^2 + (\mathring{v}^2)^2 
		& \leq g^{\mu \nu} a^3_{\mu} a^3_{\nu} 
			\leq \frac3{2} + (\mathring{v}^1)^2 + (\mathring{v}^2)^2.
			\label{E:COF3ONEFORMPOSITIVITY}
	\end{align}
	\end{subequations}
\end{lemma}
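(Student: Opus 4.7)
The strategy is to propagate each bootstrap quantity from its initial value via the fundamental theorem of calculus in $\tau$, using the a priori bound $\norm(\tau) \leq \mathring{M}$ to control the time derivative in $L^\infty$ or in a suitable Sobolev norm. At $\tau = 0$, each of the claimed improved inequalities holds strictly (by the initial data assumptions of Sect.~\ref{SSS:SIMPLE} and the computations of Sect.~\ref{SSS:IVMATRICES}), so by continuity in time and the smallness of the integral $\int_0^\tau (\cdot)\,d\tau'$, taking $\tau$ small enough yields the improvements.

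For~\eqref{E:NUMBERDENSITYNOTTOOBIG}, I would use the evolution equation~\eqref{E:FEVOLUTION}, $\partial_\tau f = -f\,\mathscr{A}_\alpha^K \partial_K v^\alpha$. Combining the bootstrap assumptions on $\mathscr{A}$ and $\mathscr{J}$ with the Sobolev embedding $H^2(\mathfrak{M}_\tau) \hookrightarrow L^\infty(\mathfrak{M}_\tau)$ applied to the high-order velocity bounds built into $\norm(\tau)$, one obtains $\|\partial_\tau f\|_{L^\infty(\mathfrak{M}_\tau)} \leq P(\mathring{M})$. Since $f|_{\tau=0} = \mathring{n} \leq \varepsilon \ll 1/8$ by~\eqref{E:SMALLDENSITY}, integrating in $\tau$ yields $f(\tau,y) \leq \mathring n(y) + \tau P(\mathring M) \leq 1/6$ once $\tau$ is small.

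For~\eqref{E:BOOTSTRAPETABETTER} and~\eqref{E:VELOCITYBOOTSTRAPBETTER}, I would use $\upeta(\tau) = \upeta(0) + \int_0^\tau v\,d\tau'$ and $\partial_\tau^a v(\tau) = \partial_\tau^a v(0) + \int_0^\tau \partial_\tau^{a+1}v\,d\tau'$, together with Minkowski's integral inequality. The half-integer Sobolev norms $\|v\|_{H^{3.5}}$ and $\|\partial_\tau^{a+1}v\|_{H^{3-a/2}}$ would be estimated by interpolation between the integer-order norms that appear in~\eqref{E:NORM} (for instance $\|v\|_{H^{3.5}}^2 \lesssim \|\upeta\|_{H^4}\|\partial_\tau^2\upeta\|_{H^3}$), which are bounded by $P(\mathring M)$. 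Shrinking $\tau$ then absorbs the time integral into the slack between the bootstrap constant $1$ and the improved constant $1/2$.

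For~\eqref{E:JACOBIANPOSITIVITY}, I would use $\mathscr{J}|_{\tau=0} = \mathring v^0$ from~\eqref{E:INITIALJACOBIANDETERMINANT} together with~\eqref{E:PARTIALTAUJISDETERMINEDINTERMSOFDIVU} to bound $\partial_\tau\mathscr{J}$ in $L^\infty$ by $P(\mathring M)$. For~\eqref{E:COF3ONEFORMPOSITIVITY}, a short computation from~\eqref{E:INITIALINVERSECHOVMATRIX} and~\eqref{E:INITIALJACOBIANDETERMINANT} gives $g^{\mu\nu} a^3_\mu a^3_\nu\big|_{\tau=0} = 1 + (\mathring v^1)^2 + (\mathring v^2)^2$, which lies strictly inside the target interval $[1/2 + (\mathring v^1)^2 + (\mathring v^2)^2,\ 3/2 + (\mathring v^1)^2 + (\mathring v^2)^2]$. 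Since $\partial_\tau a^3_\mu$ is controlled in $L^\infty$ by $P(\mathring M)$ via the differentiation identity~\eqref{E:DETERMINANTTIMESDETAINVERSEDIFFERENTIATED} and Sobolev embedding, another application of the fundamental theorem of calculus closes the bound. There is no essential analytical obstacle in any of these steps; the only mild point needing care is the interpolation used to extract half-integer Sobolev norms from the integer-order norms present in $\norm(\tau)$.
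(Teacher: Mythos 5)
Your proposal is correct and follows essentially the same fundamental-theorem-of-calculus strategy as the paper: propagate each bootstrap quantity from its initial value, bounding the time derivative by $P(\mathring{M})$ via Sobolev embedding and $\norm(\tau)\leq\mathring{M}$, then shrink $\tau$. The only mild deviation is in the proof of \eqref{E:NUMBERDENSITYNOTTOOBIG}: you use the evolution equation \eqref{E:FEVOLUTION} for $\partial_\tau f$, whereas the paper instead exploits the algebraic constraint $f = \mathring{f}\,\mathring{v}^0\mathscr{J}^{-1}$ (from \eqref{E:INITIALFUNCTTION}) and applies FTC to the quantity $1-\mathring{v}^0\mathscr{J}^{-1}$; these are equivalent, and your version is no less clean. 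One caution worth recording, which you yourself flag: the stated interpolation $\|v\|_{H^{3.5}}^2 \lesssim \|\upeta\|_{H^4}\|\partial_\tau^2\upeta\|_{H^3}$ mixes spatial order with time-derivative order, and as a pointwise-in-$\tau$ inequality it is not a standard interpolation estimate (a Landau--Kolmogorov-type argument on the finite time interval produces an additional low-order term with a $T^{-1}$-type constant); the paper is equally terse here (``follow in an analogous way''), so this is not a flaw of your write-up relative to the source, but if this lemma were expanded one would want to spell out exactly how the half-integer norms of odd time derivatives are recovered from the integer-order building blocks of $\norm(\tau)$.
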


\begin{proof}
To prove \eqref{E:NUMBERDENSITYNOTTOOBIG}, we first recall
our smallness assumption~\eqref{E:SMALLDENSITY}:
$\mathring f \leq \varepsilon << \frac{1}{3}$. Since $f= \mathring f \mathring v^0 \mathscr{J}^{-1}$,
we have $|f-\mathring f| = \mathring f (1-\mathring v^0\mathscr{J}^{-1})$,
and we can apply the fundamental theorem of calculus to 
deduce $|1-\mathring v^0\mathscr{J}^{-1}|\leq \tau \mathring{M}$. 
The estimates~\eqref{E:BOOTSTRAPETABETTER}-~\eqref{E:JACOBIANPOSITIVITY} follow in an analogous way, where we recall~\eqref{E:INITIALJACOBIANDETERMINANT}.
To prove \eqref{E:COF3ONEFORMPOSITIVITY},
we note that $\Cof_{\mu}^3$ is a sum of cubic terms of the schematic form 
$(\partial_{\tau} \upeta) \cdot \AngMD \upeta \cdot \AngMD \upeta$.
Hence, using Sobolev embedding $H^2(\mathfrak{M}) \hookrightarrow L^{\infty}(\mathfrak{M})$,
we see that 
$\| \partial_{\tau}(g^{\mu \nu} a^3_{\mu} a^3_{\nu}) \|_{L^{\infty}(\mathfrak{M}_{\tau})}
\leq P(\mathring{M})
$.
Thus, by the fundamental theorem of calculus, we have
$
\left|
	g^{\mu \nu} a^3_{\mu} a^3_{\nu}
	- 
	g^{\mu \nu} a^3_{\mu} a^3_{\nu}|_{\tau=0}
\right|
\leq \tau P(\mathring{M})
$.
Moreover, using~\eqref{E:INITIALINVERSECHOVMATRIX} and~\eqref{E:INITIALJACOBIANDETERMINANT},
we compute that
$g^{\mu \nu} a^3_{\mu} a^3_{\nu}|_{\tau=0} 
= 1 + (\mathring{v}^1)^2 + (\mathring{v}^2)^2$.
The desired estimate \eqref{E:COF3ONEFORMPOSITIVITY} now easily follows.
\end{proof}

\subsection{Simple estimates relying on the fundamental theorem of calculus}
\label{SS:FTCESTIMATES}

Throughout the paper, we often bound simple error terms without giving full
details, mentioning only that we are using ``the fundamental theorem of calculus.''
In this section, we briefly describe what we mean by this.
When we say that we are bounding a quantity $Q$ in some norm $\| \cdot \|$
using the fundamental theorem of calculus,
we mean that the standard Sobolev calculus 
(which in the present article relies on Holder's inequality,
interpolation,
and the non-degenerate Sobolev embeddings
$H^2(\mathfrak{M}) \hookrightarrow L^{\infty}(\mathfrak{M})$,
$H^1(\mathfrak{M}) \hookrightarrow L^6(\mathfrak{M})$,
$H^1(\mathfrak{M}) \hookrightarrow L^4(\mathfrak{M})$,
$H^{.5}(\mathfrak{M}) \hookrightarrow L^3(\mathfrak{M})$)
implies that 
$\| \partial_{\tau} Q(\tau) \| \leq P(\norm(\tau))$,
where $\norm$ is the norm defined in \eqref{E:NORM}.
Thus, integrating in time, we obtain 
$\| \partial_{\tau} Q(\tau) \| \leq \mathring{M} + \tau P(\norm(\tau))$.
A typical term that can bounded in this way is one that
is below-top-order in the sense of the norm $\norm$.

As examples of quantities that can be bounded in this way, we cite:
$\| \g_{\tau}^{2p} \upeta \|_{H^{3-p}(\mathfrak{M}_{\tau})}$ for $0 \leq p \leq 3$,
$\| \g_{\tau} \upeta \|_{H^3(\mathfrak{M}_{\tau})}$,
$\| \sqrt{\mathring{F}} \g_{\tau}^{2p} \AngMD^{3-p} \upeta \|_{L^2(\mathfrak{M}_{\tau})}$
for $0 \leq p \leq 3$,
$\|\mathring{F}\g_{\tau}^{2p}(\mathscr{J}^{-2}) \|_{H^{3-p}(\mathfrak{M}_{\tau})}$
and $\|\mathring{F}\g_{\tau}^{2p} \mathscr{J} \|_{H^{3-p}(\mathfrak{M}_{\tau})}$
for $0 \leq p \leq 3$,
and 
$\| \g_{\tau}^{2p} \MD^{2-p} \upeta \|_{L^{\infty}(\mathfrak{M}_{\tau})}$ for 
$0 \leq p \leq 2$.

\subsection{Estimates involving contractions against the four-velocity}
\label{SS:FOURVELOCITYESTIMATES}

Recall that 	
$\underline{\upeta} = (\upeta^1,\upeta^2,\upeta^3)$
denotes the projection of the flow map $\upeta$ onto $\mathbb{R}^3$. 
We use the next lemma to show that the top-order derivatives of $\upeta^0$
are controlled by corresponding top-order derivatives of
$\underline{\upeta}$.
The main idea of the proof is that the constraint \eqref{E:NORMALIZATIONLAGRANGIAN}
yields improved estimates for the high derivatives of $\upeta^{\alpha}$
whenever it is \emph{contracted} against the (undifferentiated) one-form $v_{\alpha}$.

\begin{lemma}
	\label{L:CONTRACTIONAGAINSTUFIRSTSOBOLEVESTIMATE}
	Under the bootstrap assumptions of Sect.~\ref{SS:BOOTSTRAPASSUMPTIONS},
	we have the following estimates 
	for $\tau \in [0,T]$:	
	\begin{subequations}
	\begin{align} 
		\sum_{p=0}^4 
			\| v_{\alpha} \partial_{\tau}^{2p} \ITIMESMD \upeta^{\alpha} \|_{H^{4-p}(\mathfrak{M}_{\tau})}^2
		& \leq
			P(\norm(\tau)),
			\label{E:NOTTIMEINTEGRATEDCONTRACTIONAGAINSTUFIRSTSOBOLEVESTIMATE} \\
			\sum_{p=0}^4 
			\| \sqrt{\mathring{F}} v_{\alpha} \partial_{\tau}^{2p+1} \ITIMESMD \upeta^{\alpha} \|_{H^{4-p}(\mathfrak{M}_{\tau})}^2
			& \leq
			P(\norm(\tau)),
			 \label{E:NOTTIMEINTEGRATEDCONTRACTIONAGAINSTUSECONDSOBOLEVESTIMATE} \\
		\sum_{p=0}^4 
			\| v_{\alpha} \partial_{\tau}^{2p} \upeta^{\alpha} \|_{H^{4-p}(\mathfrak{M}_{\tau})}^2
		& \leq
		\mathring{M}
		+
		C \tau \norm(\tau),
			\label{E:CONTRACTIONAGAINSTUFIRSTSOBOLEVESTIMATE} \\
		\sum_{p=0}^4 \| \sqrt{\mathring{F}} v_{\alpha} \g_{\tau}^{2p+1} \AngMD^{4-p} \upeta^{\alpha} \|_{L^2(\mathfrak{M}_{\tau})}^2
		& \leq
		\mathring{M}
		+
		C \tau \norm(\tau),
			\label{E:WEIGHTEDCONTRACTIONAGAINSTUFIRSTSOBOLEVESTIMATE} \\
		\sum_{p=0}^4 \|\mathring{F} v_{\alpha} \g_{\tau}^{2p} \AngMD^{4-p} \MD \upeta^{\alpha} \|_{L^2(\mathfrak{M}_{\tau})}^2
		& \leq
		\mathring{M}
		+
		C \tau \norm(\tau),
			\label{E:TOPORDERWEIGHTEDCONTRACTIONAGAINSTUFIRSTSOBOLEVESTIMATE} \\
	\| 
		\partial_{\tau}^{2p} \upeta^0
	\|_{H^{4-p}(\mathfrak{M}_{\tau})}
	& \leq
		\mathring{M} 
		+ \tau P(\norm(\tau))
		+
		\sum_{a=1}^3
		\| 
			\g_{\tau}^{2p} \upeta_a
		\|_{H^{4-p}(\mathfrak{M}_{\tau})}.
	\label{E:0COMPONENTINTEMRSOFSPATIALCOMPONENT}
	\end{align}
	\end{subequations}
\end{lemma}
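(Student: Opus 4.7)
My plan is to systematically exploit the constraint~\eqref{E:NORMALIZATIONLAGRANGIAN}, $g_{\alpha\beta}v^\alpha v^\beta = -1$. Differentiating by any spacetime multi-derivative $\partial^I$ with $|I|\geq 1$ produces the key identity
\begin{align*}
2 v_\alpha \partial^I v^\alpha
 = -\sum_{\substack{I_1+I_2=I\\|I_1|,\,|I_2|\geq 1}}\binom{I}{I_1}\,(\partial^{I_1}v_\alpha)(\partial^{I_2}v^\alpha),
\end{align*}
whose right-hand side is \emph{bilinear} in strictly lower-order derivatives of $v$ (and vanishes identically when $|I|=1$). This is the mechanism by which $v_\alpha$ contracted against a multi-derivative of $v^\alpha$ trades one derivative of regularity for a quadratic error of lower order. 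Since $v^\alpha = \partial_\tau \upeta^\alpha$, the identity will apply to any expression of the form $v_\alpha \partial^{I}\upeta^\alpha$ as soon as $\partial^I$ contains at least one $\partial_\tau$.

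To treat~\eqref{E:NOTTIMEINTEGRATEDCONTRACTIONAGAINSTUFIRSTSOBOLEVESTIMATE} and~\eqref{E:NOTTIMEINTEGRATEDCONTRACTIONAGAINSTUSECONDSOBOLEVESTIMATE}, I would commute an outermost $\partial_\tau$ to rewrite $\partial_\tau^{2p}\ITIMESMD\upeta^\alpha$ as $\partial^{I'}v^\alpha$ (possible in every case except $p=0$ with $\ITIMESMD\in\{\partial_1,\partial_2,\partial_3\}$), apply the identity above, and bound each resulting bilinear term in $H^{4-p}$ by Moser/Kato--Ponce estimates together with the Sobolev embeddings $H^2(\mathfrak{M})\hookrightarrow L^\infty$ and $H^1\hookrightarrow L^6$. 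Each factor carries strictly fewer derivatives than the naïve top-order count, so it lies within the scope of $\norm(\tau)$, and in~\eqref{E:NOTTIMEINTEGRATEDCONTRACTIONAGAINSTUSECONDSOBOLEVESTIMATE} the weight $\sqrt{\mathring F}$ is placed on whichever factor aligns with the weighted entries of $\norm$. For the remaining purely spatial case, the single-derivative form of the constraint, $v_\alpha \partial_A v^\alpha = 0$, yields the evolution identity $\partial_\tau(v_\alpha \partial_A \upeta^\alpha) = (\partial_\tau v_\alpha) \partial_A \upeta^\alpha$; I would integrate from $\tau=0$ (bounding the initial value by $\mathring M$) and estimate the integrand using the evolution equation~\eqref{E:LAGRNAGIANMAINEVOLUTIONEQUATION} to reexpress $\partial_\tau v$ in terms of quantities controlled by $\norm$, absorbing all $\tau$-dependent factors into $P(\cdot)$ via the convention of Sect.~\ref{SS:NOTATION}.

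The time-integrated estimates~\eqref{E:CONTRACTIONAGAINSTUFIRSTSOBOLEVESTIMATE}--\eqref{E:TOPORDERWEIGHTEDCONTRACTIONAGAINSTUFIRSTSOBOLEVESTIMATE} will all follow from the same template: apply the fundamental theorem of calculus,
\begin{align*}
v_\alpha \partial_\tau^{2p}\upeta^\alpha(\tau)
 = v_\alpha \partial_\tau^{2p}\upeta^\alpha\big|_{\tau=0}
 + \int_0^\tau \bigl[(\partial_{\tau'} v_\alpha)(\partial_{\tau'}^{2p}\upeta^\alpha) + v_\alpha \partial_{\tau'}^{2p+1}\upeta^\alpha\bigr]\,d\tau',
\end{align*}
and analogously for the weighted variants. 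The initial value contributes the $\mathring M$ piece; the first term in the integrand is below top order and bounded via Moser against $\norm(\tau)$; and the second term is precisely the content of~\eqref{E:NOTTIMEINTEGRATEDCONTRACTIONAGAINSTUFIRSTSOBOLEVESTIMATE} taken with $\ITIMESMD=\partial_\tau$, which is already established. Time-integrating yields the desired form $\mathring M + C\tau\,\norm(\tau)$. In the weighted versions the factors $\sqrt{\mathring F}$ and $\mathring F$ are $\tau$-independent and commute with $\partial_\tau$, so the scheme carries over verbatim after distributing the weight onto the matching piece of $\norm$.

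Finally, I would obtain~\eqref{E:0COMPONENTINTEMRSOFSPATIALCOMPONENT} by algebraic inversion. In rectangular Minkowski components,
\begin{align*}
\partial_\tau^{2p}\upeta^0
 = \frac{1}{v^0}\Bigl(\sum_{a=1}^3 v^a \partial_\tau^{2p}\upeta_a - v_\alpha \partial_\tau^{2p}\upeta^\alpha\Bigr),
\end{align*}
and since $v^0 = \sqrt{1+|\underline v|^2}\geq 1$ is a smooth function of $\underline v$, Moser-type estimates for the smooth composition $1/v^0$, combined with the bootstrap bounds on $\underline v$ and estimate~\eqref{E:CONTRACTIONAGAINSTUFIRSTSOBOLEVESTIMATE}, give the stated bound, with $\sum_{a=1}^3\|\partial_\tau^{2p}\upeta_a\|_{H^{4-p}}$ appearing explicitly on the right-hand side. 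The main obstacle throughout will be careful bookkeeping in the Moser estimates: whenever the naïve product rule would demand a top-order derivative of $v$ or $\upeta$ that is not directly present in $\norm$, the constraint identity must be invoked to split that derivative across two factors of strictly lower order, a maneuver that is structurally exactly what the lemma claims is possible.
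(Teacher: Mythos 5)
Your proposal follows the same approach as the paper: differentiate the normalization constraint $g_{\alpha\beta}v^\alpha v^\beta = -1$ to convert the top-order contraction against $v_\alpha$ into a bilinear expression in strictly lower-order derivatives of $v$, estimate the bilinear remainders by Sobolev calculus and $H^2(\mathfrak{M})\hookrightarrow L^\infty(\mathfrak{M})$, and apply the fundamental theorem of calculus to obtain the factor of $\tau$ in the time-integrated estimates \eqref{E:CONTRACTIONAGAINSTUFIRSTSOBOLEVESTIMATE}--\eqref{E:TOPORDERWEIGHTEDCONTRACTIONAGAINSTUFIRSTSOBOLEVESTIMATE}; the algebraic decomposition of $\partial_\tau^{2p}\MD^{4-p}\upeta^0$ for \eqref{E:0COMPONENTINTEMRSOFSPATIALCOMPONENT} is likewise identical.

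Two small remarks where your route adds unnecessary machinery. First, the paper performs the $\upeta^0$ decomposition \emph{after} applying $\MD^{4-p}\partial_\tau^{2p}$, so that the coefficients $1/v^0$ and $v_a/v^0$ appear undifferentiated and can be estimated by the elementary pointwise bounds $1/v^0\le 1$, $|v_a/v^0|<1$; there is no need for a Moser estimate on the composition $1/v^0$. Second, for the $p=0$, purely spatial case of \eqref{E:NOTTIMEINTEGRATEDCONTRACTIONAGAINSTUFIRSTSOBOLEVESTIMATE}, appealing to the evolution equation \eqref{E:LAGRNAGIANMAINEVOLUTIONEQUATION} to ``reexpress $\partial_\tau v$'' is both unnecessary and potentially problematic: \eqref{E:LAGRNAGIANMAINEVOLUTIONEQUATION} carries a degenerate $\mathring{F}$ prefactor, so solving it for $\partial_\tau v_\mu$ would introduce a division by $\mathring{F}\Enth$ that you would then need Hardy-type inequalities to digest. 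The paper handles this case (like the others) purely by the pointwise constraint identity \eqref{E:PARTIALTUCONTRACTEDPOINTWISE} together with the fundamental theorem of calculus, with no appeal to the equations of motion.
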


\begin{proof}
	To prove \eqref{E:CONTRACTIONAGAINSTUFIRSTSOBOLEVESTIMATE}, we
	first differentiate \eqref{E:NORMALIZATIONLAGRANGIAN} with
	$\partial_{\tau}^{2p} \MD^{4-p}$
	and obtain
	\begin{align} 
		\left|
			\partial_{\tau}(v_{\alpha} \partial_{\tau}^{2p} \MD^{4-p} \upeta^{\alpha})
		\right|
		& \lesssim 
			\left|
				\partial_{\tau}^2 \upeta_{\alpha} 
			\right|
			\left|
				\partial_{\tau}^{2p} \MD^{ 4-p} \upeta^{\alpha}
			\right|
			+
			\mathop{\sum_{p_1 + p_2 \leq 2p-1}}_{q_1 + q_2 \leq 3-p}
			\left|
				\partial_{\tau}^{p_1 + 1} \MD^{q_1} \upeta
			\right|
			\left|
				\partial_{\tau}^{p_2 + 1} \MD^{q_2} \upeta
			\right|.
			\label{E:PARTIALTUCONTRACTEDPOINTWISE}
	\end{align}
	From 
	\eqref{E:PARTIALTUCONTRACTEDPOINTWISE},
	definition \eqref{E:NORM} and  the Sobolev embedding
	$H^2(\mathfrak{M}) \hookrightarrow L^{\infty}(\mathfrak{M})$,
	we deduce that 
	$
		\|  \partial_{\tau}(v_{\alpha} \partial_{\tau}^{ 2p} \MD^{ 4-p} \upeta^{\alpha}) \|_{L^2(\mathfrak{M}_{\tau})}
		\leq 
		\mathring{M}
		+ C \norm(\tau)
	$.
	Integrating in time and using the previous estimate, 
	we deduce
	$
		\|
			 v_{\alpha} \partial_{\tau}^{ 2p} \MD^{4-p} \upeta^{\alpha} 
		\|_{L^2(\mathfrak{M}_{\tau})}^2
	\leq \mathring{M} \tau + C \tau \norm(\tau)
	$,
	from which the desired estimate \eqref{E:CONTRACTIONAGAINSTUFIRSTSOBOLEVESTIMATE} readily follows.
	The proofs of
	\eqref{E:WEIGHTEDCONTRACTIONAGAINSTUFIRSTSOBOLEVESTIMATE} 
	and 
	\eqref{E:TOPORDERWEIGHTEDCONTRACTIONAGAINSTUFIRSTSOBOLEVESTIMATE} are similar,
	and we omit the details.
	The proof of \eqref{E:NOTTIMEINTEGRATEDCONTRACTIONAGAINSTUFIRSTSOBOLEVESTIMATE} also 
	is similar but does not involve a time integration; we omit the details.
	
To prove \eqref{E:0COMPONENTINTEMRSOFSPATIALCOMPONENT}, we first decompose 
$\partial_{\tau}^{2p} \MD^{4-p} \upeta^0
= - \frac{1}{v^0} v_{\alpha} \partial_{\tau}^{2p} \MD^{4-p} \upeta^{\alpha}
+ \frac{v_a}{v^0} \partial_{\tau}^{2p} \MD^{4-p} \upeta^a
$.
From \eqref{E:NORMALIZATIONLAGRANGIAN}, we deduce
$\| 
	\frac{1}{v^0} 
\|_{L^{\infty}(\mathfrak{M}_{\tau})}
\leq 1
$
and
$
\left\|
	\frac{v^a}{v^0}
\right\|_{L^{\infty}(\mathfrak{M}_{\tau})}
< 1
$.
The desired bound \eqref{E:0COMPONENTINTEMRSOFSPATIALCOMPONENT}
follows easily from these estimates and
\eqref{E:CONTRACTIONAGAINSTUFIRSTSOBOLEVESTIMATE}.
\end{proof}

\section{Vorticity estimates}\label{S:VORTICITY}

In this section, we use the special structure of
equation \eqref{E:LAGRANGIANENTHALPYEVOLUTION} to
derive estimates for the vorticity up to top order.
It is important for our strategy that the vorticity estimates
can be obtained independently of our estimates for other quantities.

We derive our main vorticity estimates in Prop.~\ref{P:VORTICITYESTIMATES}.
We start with a preliminary lemma in which we obtain estimates
for the vorticity of $\partial_{\tau} v$.

\begin{lemma}\label{L:VORTICITYLEMMA}
Let $\delta > 0$ be a constant and let $\varepsilon$ be as in \eqref{E:SMALLDENSITY}.
Under the bootstrap assumptions of Sect.~\ref{SS:BOOTSTRAPASSUMPTIONS},
we have the following estimates for $\tau \in [0,T]$ 
(where $\mathring{M}$ is allowed to depend on $\delta^{-1}$
and $\mu,\nu = 0,1,2,3$):
\begin{subequations}
\begin{align} 
\sum_{p=0}^3 
\left\|
	\mathring{F} \g_\tau^{2p}\AngMD^{3-p} (\Lagvort \g_{\tau}v)_{\mu\nu}
\right\|_{L^2(\mathfrak{M}_{\tau})}^2  
& \leq \mathring{M} +(\delta + C \varepsilon)\norm(\tau) + \tau P(\norm(\tau)) \,,  \label{hss1}\\
\sum_{p=0}^2
\left\|
	\g_{\tau}^{2p}(\Lagvort \g_{\tau}v)_{\mu\nu} 
\right\|_{H^{2-p}(\mathfrak{M}_{\tau})}^2
& \leq \mathring{M} + \tau P(\norm(\tau)) \,.  \label{hss2}
\end{align} 
\end{subequations}
\end{lemma}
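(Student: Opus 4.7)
The plan is to exploit the structural identity
\[
(\Lagvort \partial_\tau(\Enth v))_{\mu\nu} = 0,
\]
which follows by applying $\Lagvort$ to \eqref{E:LAGRANGIANENTHALPYEVOLUTION} and using that $\mathscr{A}^K_\mu \partial_K \Enth$ is precisely the Eulerian gradient $\partial \Enth/\partial x^\mu$, hence annihilated by $\Lagvort$ (exactly the observation highlighted in the remark following \eqref{E:LAGRANGIANENTHALPYEVOLUTION}). Expanding via the Leibniz rule yields the pointwise algebraic identity
\[
\Enth\,(\Lagvort \partial_\tau v)_{\mu\nu}
= \Lagdiff_\nu \Enth\,\partial_\tau v_\mu - \Lagdiff_\mu \Enth\,\partial_\tau v_\nu + v_\mu \Lagdiff_\nu \partial_\tau \Enth - v_\nu \Lagdiff_\mu \partial_\tau \Enth - \partial_\tau \Enth\,(\Lagvort v)_{\mu\nu} =: \mathcal{R}_{\mu\nu}.
\]
The crucial structural observation is that by \eqref{E:ENTHDERIVATIVEID}, $\partial_\tau \Enth = -2\mathring{F}\,\Enth^{3/2}\mathscr{J}^{-2}\partial_\tau \mathscr{J}$ carries an explicit factor of $\mathring{F}$, and similarly $\Lagdiff \Enth = 2\Enth^{3/2}\bigl(\mathscr{J}^{-1}\Lagdiff \mathring{F} + \mathring{F}\,\Lagdiff(\mathscr{J}^{-1})\bigr)$; the only unweighted piece is $\Lagdiff \mathring{F}$, which is a derivative of a fixed $H^4(\mathfrak{M})$ function and hence is pointwise bounded via Sobolev embedding. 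Combined with the lower bound $\Enth \geq 1$ afforded by \eqref{E:BOOTSTRAPNUMBERDENSITY}, the rewriting $(\Lagvort \partial_\tau v)_{\mu\nu} = \Enth^{-1}\mathcal{R}_{\mu\nu}$ is the starting point of the proof.

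To establish \eqref{hss1}, I would apply $\partial_\tau^{2p}\AngMD^{3-p}$ to the identity $(\Lagvort \partial_\tau v)_{\mu\nu} = \Enth^{-1}\mathcal{R}_{\mu\nu}$, multiply by $\mathring{F}$, and take $L^2(\mathfrak{M}_\tau)$ norms. Commutator errors generated by $[\partial_\tau^{2p}\AngMD^{3-p},\Enth^{-1}]$ and by the $\upeta$-dependence of $\Lagvort$ through $\mathscr{A}$ are strictly below top-order in the sense of $\norm$ and are controllable by the fundamental theorem of calculus (Sect.~\ref{SS:FTCESTIMATES}), contributing $\mathring{M} + \tau P(\norm(\tau))$. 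Mid-order distributions of derivatives on $\mathcal{R}_{\mu\nu}$ are treated the same way via the Sobolev product rule, invoking Lemma~\ref{L:CONTRACTIONAGAINSTUFIRSTSOBOLEVESTIMATE} whenever an undifferentiated $v_\mu$ factor is contracted against a high derivative of $\upeta$.

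The main obstacle is the top-order contribution, in which every derivative from $\partial_\tau^{2p}\AngMD^{3-p}$ falls on $\partial_\tau v$, $\Lagvort v$, or $\partial_\tau \mathscr{J}$. A representative worst-case term is
\[
\mathring{F}\,\Enth^{-1}\Lagdiff \Enth \cdot \partial_\tau^{2p+1}\AngMD^{3-p} v
= \mathring{F}\,\Enth^{-1}\Lagdiff \Enth \cdot \partial_\tau^{2(p+1)}\AngMD^{3-p}\upeta,
\]
which for $p=3$ reaches $\partial_\tau^{8}\upeta$, i.e.\ the top order of $\norm$. Here the strategy advertised in the introduction applies: writing $\mathring{F} = \sqrt{\mathring{F}}\cdot \sqrt{\mathring{F}}$, using $\|\sqrt{\mathring{F}}\|_{L^\infty} \leq \sqrt{\varepsilon}$ from \eqref{E:SMALLDENSITY}, and pairing the remaining $\sqrt{\mathring{F}}$ with the weighted norm pieces $\|\sqrt{\mathring{F}}\,\partial_\tau^{2p+1}\AngMD^{4-p}\upeta\|_{L^2}$ and with the unweighted pieces $\|\partial_\tau^{2p}\upeta\|_{H^{4-p}}$ of $\norm$ produces the $C\varepsilon\,\norm(\tau)$ contribution. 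The companion top-order term $\mathring{F}\,v\,\Lagdiff \partial_\tau \Enth$ contains $\mathring{F}\,\Lagdiff(\mathring{F}\mathscr{J}^{-2}\partial_\tau \mathscr{J})$; using $\partial_\tau \mathscr{J} = -\tfrac{1}{2}\mathscr{J}^{3}\partial_\tau(\mathscr{J}^{-2})$ and distributing derivatives, the worst piece reduces to quantities of the form $\|\mathring{F}\,\partial_\tau^{2p}\AngMD^{4-p}(\mathscr{J}^{-2})\|_{L^2}$, which are bounded by the weighted $\mathscr{J}^{-2}$ component of $\norm$; a Young's inequality then absorbs the top-order portion into $\delta\,\norm(\tau)$ at the cost of a constant depending on $\delta^{-1}$, as permitted by the hypothesis on $\mathring{M}$. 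No boundary terms appear since \eqref{hss1} is derived without integration by parts, and $\mathring{F}$ vanishes on $\partial\mathfrak{M}_\tau$ in any case.

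Finally, the unweighted estimate \eqref{hss2} is strictly below top-order: the maximum number of derivatives applied to $\Lagvort \partial_\tau v$ in the sum is $2p+(2-p) = p+2 \leq 4$, corresponding to at most $6$ derivatives of $\upeta$, which is covered by the unweighted piece $\sum_{p=0}^{4}\|\partial_\tau^{2p}\upeta\|_{H^{4-p}}^{2}$ of $\norm$. Hence direct use of the algebraic identity $(\Lagvort \partial_\tau v)_{\mu\nu} = \Enth^{-1}\mathcal{R}_{\mu\nu}$, the Sobolev product rule, and the fundamental theorem of calculus (Sect.~\ref{SS:FTCESTIMATES}) yield \eqref{hss2} without any $\mathring{F}$-weighting or smallness absorption.
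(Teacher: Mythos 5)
Your starting identity $(\Lagvort\partial_\tau(\Enth v))_{\mu\nu}=0$ matches the paper's, but you then stop at the bare Leibniz expansion, which retains the full Lagrangian gradient of the enthalpy, $\Lagdiff_\mu\Enth$ and $\Lagdiff_\mu\partial_\tau\Enth$, in your remainder $\mathcal{R}_{\mu\nu}$. The paper performs one further, crucial substitution that you omit: using \eqref{E:LAGRANGIANENTHALPYEVOLUTION} in the form $\Lagdiff_\mu\Enth=-\partial_\tau(\Enth v_\mu)$ to eliminate \emph{all} spatial derivatives of $\Enth$ from the remainder, producing \eqref{E:VORTICITYBASIC}, whose right-hand side contains only $\partial_\tau$-derivatives of $\Enth$ and $v$ (plus the low-order, pointwise-bounded factor $(\Lagvort v)_{\mu\nu}$). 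That substitution is not cosmetic: it trades a \emph{transversal} derivative $\mathscr{A}^3_\mu\partial_3\Enth$ for a \emph{time} derivative $\partial_\tau(\Enth v_\mu)$, and the norm $\norm$ simply does not see the transversal one at the required order.

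Concretely, consider your term $v_\mu\Lagdiff_\nu\partial_\tau\Enth$ for $p=0$, and the piece of $\Lagdiff_\nu$ along $\partial_3$. From \eqref{E:ENTHDERIVATIVEID}, $\partial_\tau\Enth$ carries the factor $\mathring F\,\partial_\tau(\mathscr{J}^{-2})$, so after applying $\AngMD^{3}$ and multiplying by $\mathring F$ the worst contribution scales like $\mathring F^{2}\,\AngMD^{3}\partial_3\partial_\tau(\mathscr{J}^{-2})$. This has \emph{one} $\tau$-derivative and \emph{four} spatial derivatives (one of them vertical) acting on $\mathscr{J}^{-2}$, with weight $\mathring F^{2}$. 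The relevant part of $\norm$ is $\|\mathring F\,\partial_\tau^{2p}(\mathscr{J}^{-2})\|_{H^{4-p}(\mathfrak{M}_\tau)}$ with $p\in\{0,1,2,3\}$; you would need $\|\mathring F\,\partial_\tau(\mathscr{J}^{-2})\|_{H^{4}}$ (odd number of $\tau$-derivatives, four spatial), which is absent. The fundamental-theorem-of-calculus device of Sect.~\ref{SS:FTCESTIMATES} does not rescue it either: differentiating in $\tau$ produces $\|\mathring F\,\partial_\tau^{2}(\mathscr{J}^{-2})\|_{H^{4}}$, but $\norm$ only controls $\|\mathring F\,\partial_\tau^{2}(\mathscr{J}^{-2})\|_{H^{3}}$, so one ends up one Sobolev index short. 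Your claim that ``the worst piece reduces to $\|\mathring F\,\partial_\tau^{2p}\AngMD^{4-p}(\mathscr{J}^{-2})\|_{L^2}$'' silently restricts $\Lagdiff$ to its tangential ($\partial_\tau,\AngMD$) components; the $\partial_3$-component is precisely the one that escapes. The paper's double substitution into \eqref{E:VORTICITYBASIC} is therefore an essential structural step, not a simplification; without it the top-order estimate \eqref{hss1} does not close. (Your treatment of the $\Lagdiff\Enth\cdot\partial_\tau v$ and $\partial_\tau\Enth\,(\Lagvort v)$ terms, and your analysis of \eqref{hss2}, are sound.)
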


\begin{proof}
We start by computing the vorticity of LHS~\eqref{E:LAGRANGIANENTHALPYEVOLUTION}.

Using the definition of $\Lagvort_{\mu\nu}$ given in \eqref{E:LAGRANGIANVORTICITY}, we find that
$\Lagvort_{\mu\nu}$
annihilates $\Lagdiff \Enth$ since $\Enth$ is a perfect gradient.
That is, 
\begin{align*}
\left( \Lagvort \Lagdiff \Enth \right)_{\mu\nu}
& = \left(\Lagvort\left( \mathscr{A}^K_{\bullet}\g_K \Enth\right)\right)_{\mu\nu}
= 0.
\end{align*}
Therefore, from~\eqref{E:LAGRANGIANENTHALPYEVOLUTION}, we obtain
\begin{align}
\label{E:VORTICITY1}
\left(\Lagvort \left(\g_{\tau}\left(\Enth v\right)\right)\right)_{\mu\nu} = 0.
\end{align}
We now want to rewrite 
$\left(\Lagvort \g_{\tau}\left(\Enth v\right)\right)_{\mu\nu}$ as 
$\Enth \left(\Lagvort \g_{\tau}v\right)_{\mu\nu}$ plus a remainder, which should be thought of as a lower-order error term.
A simple calculation 
based on using equation \eqref{E:LAGRANGIANENTHALPYEVOLUTION}
to substitute for $\mathscr{A}^K_{\mu} \partial_K \Enth$
yields the identity
\begin{align}\label{E:VORTICITY2}
\left(\Lagvort \g_{\tau}\left(\Enth v\right)\right)_{\mu\nu} 
& = \Enth (\Lagvort \g_{\tau} v)_{\mu\nu} + \g_{\tau}\Enth (\Lagvort v)_{\mu\nu} 
 + \mathscr{A}_{\mu}^K \partial_K \Enth \g_{\tau}v^{\nu}
				-  \mathscr{A}_{\nu}^K \partial_K \Enth \g_{\tau}v_{\mu} \\
				& +2\g_{\tau} \Enth\left(\g_{\tau}v_{\nu}v_{\mu}-\g_{\tau}v_{\mu}v_{\nu}\right) + \Enth \left(\g_{\tau}^2v_{\nu}v_{\mu}-\g_{\tau}^2v_{\mu}v_{\nu}\right). \notag
				\end{align}
Combining~\eqref{E:VORTICITY1} and~\eqref{E:VORTICITY2}
and again using equation \eqref{E:LAGRANGIANENTHALPYEVOLUTION}
to substitute for the third and fourth products on right-hand side of~\eqref{E:VORTICITY2},
we obtained the desired expression
\begin{align}
(\Lagvort \g_{\tau}v)_{\mu\nu}  = &  - \g_{\tau}\Enth (\Lagvort v)_{\mu\nu}  
 +\g_\tau(\Enth v_\mu) \g_{\tau}v_{\nu}\Enth^{-1}- \g_\tau(\Enth v_\nu) \g_{\tau}v_{\mu}\Enth^{-1} \notag\\
 & -2\Enth^{-1}\g_{\tau} \Enth\left(\g_{\tau}v_{\nu}v_{\mu}-\g_{\tau}v_{\mu}v_{\nu}\right)  
+ \g_{\tau}^2v_{\nu}v_{\mu}-\g_{\tau}^2v_{\mu}v_{\nu}. \label{E:VORTICITYBASIC}
\end{align}

{\em Proof of \eqref{hss1}.}
Let $p \in \lbrace 0,1,2,3 \rbrace$.
We apply $\partial_{\tau}^{2p}\AngMD^{3-p}$ to equation \eqref{E:VORTICITYBASIC},
multiply by $\mathring{F}$,
take the norm $\| \cdot \|_{L^2(\mathfrak{M}_{\tau})}$ of both sides, and square.
We start by bounding the term generated by the first product on right-hand side of~\eqref{E:VORTICITYBASIC}.
Note that $\g_\tau \Enth = - 2 \Enth^{3/2}\mathscr{J}^{-2}\mathring{F} \g_\tau \mathscr{J}$.
Thus, when $\partial_{\tau}^{2p}\AngMD^{3-p}$ falls onto $\g_\tau \Enth$, the highest-order term thus arising is $\g_\tau^{2p+1}\AngMD^{3-p}\mathscr{J}$.
The resulting product
$\Enth^{3/2}\mathscr{J}^{-2}\mathring{F} \g_\tau^{2p+1}\AngMD^{3-p}\mathscr{J}(\Lagvort v)_{\mu\nu} $
is below top-order and thus, using the fundamental theorem of calculus as described in
Sect.~\ref{SS:FTCESTIMATES}, we can bound it
in the norm 
$\| \cdot \|_{L^2(\mathfrak{M}_{\tau})}$
by 
$\leq \mathring{M} + \tau P(\norm(\tau))$
as desired.

Using similar reasoning, we can bound the product that arises when all derivatives
$\g_\tau^{2p}\AngMD^{3-p}$ fall on $(\Lagvort v)_{\mu\nu} $ since the top-order terms 
scale like $\mathring{F} \mathscr{A} \g_\tau^{2p}\AngMD^{3-p} \ITIMESMD v$ 
or $\mathring{F} \g_\tau^{2p}\AngMD^{3-p} \ITIMESMD \upeta v$.
The below top-order terms can be bounded 
in the norm $\| \cdot \|_{L^2(\mathfrak{M}_{\tau})}$
by $\leq \mathring{M} + \tau P(\norm(\tau))$
via the fundamental theorem of calculus.
We can use essentially the same reasoning to bound 
the $\g_\tau^{2p}\AngMD^{3-p}$ derivative of
all remaining products on right-hand side of~\eqref{E:VORTICITYBASIC}, except for the last two.
The $\g_\tau^{2p}\AngMD^{3-p}$ derivative of each of the last two terms
can be bounded in the same way, so we focus only on the last one. 
To proceed, we note that 
for $p\in\{0,1,2,3\}$,
the top-order term generated by the Leibniz expansion of 
$\g_\tau^{2p}\AngMD^{3-p}(\g_{\tau}^2v_{\nu}v_{\mu})$ 
is  
$\g_\tau^{2p+2}\AngMD^{3-p}v_\nu v_\mu$.  
We now use the smallness assumption~\eqref{E:SMALLDENSITY},
Sobolev embedding, 
and the definition~\eqref{E:NORM} of the norm $\norm$
to deduce the desired bound as follows:
\begin{align*}
&\left\|	
	\mathring{F}\g_\tau^{2p+2}\AngMD^{3-p}v_\nu v_\mu
\right\|_{L^2(\mathfrak{M}_{\tau})}^2 \\
& \qquad\qquad\qquad
\leq 
\left\| 
	\mathring{F}
\right\|_{L^{\infty}(\mathfrak{M})}
\left\|
	\sqrt{\mathring{F}}\g_\tau^{2p+2}\AngMD^{3-p}v_\nu
\right\|_{L^2(\mathfrak{M}_{\tau})}^2
\left\|
	v_\nu
\right\|_{L^{\infty}(\mathfrak{M}_{\tau})}^2
\lesssim_{\mathring{M} + \tau P(\norm(\tau))} \varepsilon \norm(\tau).
\end{align*}
All of the remaining terms arising from the Leibniz 
expansion of $\g_\tau^{2p+2}\AngMD^{3-p} (v_\nu v_\mu)$
are lower-order terms that, by virtue of the fundamental theorem of calculus,
are bounded in the norm $\| \cdot \|_{L^2(\mathfrak{M}_{\tau})}$
by $\leq \mathring{M} + \tau P(\norm(\tau))$. 
This concludes our proof of \eqref{hss1}.
\\

\noindent
{\em Proof of \eqref{hss2}.}
We apply $\partial_{\tau}^{2p} \MD^{2-p}$ to equation \eqref{E:VORTICITYBASIC}.
All terms on the right-hand side of the resulting equation are below top-order.
Thus, using the fundamental theorem of calculus as described in
Sect.~\ref{SS:FTCESTIMATES},
we can bound them in the norm 
$\| \cdot \|_{L^2(\mathfrak{M}_{\tau})}$
by 
$\leq \mathring{M} + \tau P(\norm(\tau))$
as desired.
\end{proof}

We now derive our main estimates for the vorticity.
\begin{proposition}\label{P:VORTICITYESTIMATES}
Let $\delta > 0$ be a constant and let $\varepsilon$ be as in \eqref{E:SMALLDENSITY}.
Under the bootstrap assumptions of Sect.~\ref{SS:BOOTSTRAPASSUMPTIONS},
we have the following estimates for $\tau \in [0,T]$
(where we recall that $\underline{\upeta} = (\upeta_1,\upeta_2,\upeta_3)$
and $\mathring{M}$ is allowed to depend on $\delta^{-1}$):
\begin{subequations}
\begin{align}
\left|
	\int_{\mathfrak{M}_\tau}
		\mathring{F}^2 
		\langle \Lagvort \, \partial_{\tau}^{2p}\AngMD^{4-p}\upeta,\Lagvort \, \partial_{\tau}^{2p}\AngMD^{4-p} \upeta\rangle_g 
	\, dy
\right| 
& \leq \mathring{M} + \left(\delta + C \varepsilon\right)\norm(\tau)+ \tau P(\norm(\tau)),
	\label{E:MAINVORTICITYESTIMATES} 
	\\
\|\Flatvort \g_{\tau}^{2p} \underline{\upeta}\|_{H^{3-p}(\mathfrak{M}_{\tau})}^2 	
& \leq \mathring{M} + \tau P(\norm(\tau)).
\label{E:FLATMAINVORTICITYESTIMATES}
\end{align}
\end{subequations}
\end{proposition}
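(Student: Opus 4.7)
The plan is to derive \eqref{E:MAINVORTICITYESTIMATES} from Lemma~\ref{L:VORTICITYLEMMA} by a commutator argument, exploiting the identity $v=\partial_\tau\upeta$. For $p\geq 1$ we rewrite
\[
\partial_\tau^{2p}\AngMD^{4-p}\upeta=\partial_\tau^{2(p-1)}\AngMD^{4-p}(\partial_\tau v),
\]
so that $\Lagvort\partial_\tau^{2p}\AngMD^{4-p}\upeta$, after commuting $\Lagvort$ past $\partial_\tau^{2(p-1)}\AngMD^{4-p}$, becomes $\partial_\tau^{2(p-1)}\AngMD^{4-p}(\Lagvort\partial_\tau v)$ modulo commutators. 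With $p':=p-1$ this is the very quantity controlled by~\eqref{hss1}. The case $p=0$ requires a separate temporal-FTC argument, and the flat-vorticity bound~\eqref{E:FLATMAINVORTICITYESTIMATES} will then follow by comparing $\Flatvort$ to the spatial block of $\Lagvort$ and using the unweighted estimate~\eqref{hss2}.

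For $p\geq 1$, since $\partial_K$ commutes with $\partial_\tau^{2(p-1)}\AngMD^{4-p}$ in Lagrangian coordinates, the commutator falls entirely on $\mathscr{A}$:
\begin{align*}
\bigl(\Lagvort\partial_\tau^{2p}\AngMD^{4-p}\upeta\bigr)_{\mu\nu}
&=\partial_\tau^{2(p-1)}\AngMD^{4-p}\bigl(\Lagvort\partial_\tau v\bigr)_{\mu\nu} \\
&\quad-\bigl[\partial_\tau^{2(p-1)}\AngMD^{4-p},\,\mathscr{A}_\mu^K\bigr]\partial_K\partial_\tau v_\nu +(\mu\leftrightarrow\nu).
\end{align*}
Multiplying by $\mathring{F}$ and taking $L^2$-norms, the first term is directly controlled by Lemma~\ref{L:VORTICITYLEMMA}. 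Each Leibniz product in the commutator, expressed via~\eqref{E:DETAINVERSEDIFFERENTIATED} as a product of derivatives of $\upeta$, falls into two categories: (i) products with both factors strictly below top-order in $\norm(\tau)$, bounded by $\mathring{M}+\tau P(\norm(\tau))$ via the FTC recipe of Section~\ref{SS:FTCESTIMATES} after placing the lower-regularity factor in $L^\infty$ by Sobolev embedding; (ii) products in which one factor matches a top-order norm entry such as $\|\mathring{F}\,\partial_\tau^{2p}\AngMD^{4-p}\MD\upeta\|_{L^2}$ or $\|\sqrt{\mathring{F}}\,\partial_\tau^{2p+1}\AngMD^{4-p}\upeta\|_{L^2}$, which are bounded using the pointwise smallness $\|\mathring{F}\|_{L^\infty}\leq\varepsilon$ from~\eqref{E:SMALLDENSITY} to extract the $C\varepsilon\,\norm(\tau)$ contribution.

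For $p=0$ the factoring is unavailable. I integrate in time:
\[
\bigl(\Lagvort\AngMD^4\upeta\bigr)(\tau)=\bigl(\Lagvort\AngMD^4\upeta\bigr)(0)+\int_0^\tau\partial_\sigma\bigl(\Lagvort\AngMD^4\upeta\bigr)\,d\sigma,
\]
with $\partial_\sigma(\Lagvort\AngMD^4\upeta)_{\mu\nu}=(\Lagvort\AngMD^4 v)_{\mu\nu}+(\partial_\tau\mathscr{A}_\mu^K)\,\partial_K\AngMD^4\upeta_\nu-(\mu\leftrightarrow\nu)$. Commuting $\AngMD^4$ past $\Lagvort$ gives $\AngMD^4\Lagvort v$ modulo strictly lower-order terms, and $\|\mathring{F}\,\AngMD^4\Lagvort v\|_{L^2}^2$ is itself a summand of $\norm(\tau)$; the remaining piece is controlled by placing $\partial_\tau\mathscr{A}$ in $L^\infty$ and noting that $\|\mathring{F}\AngMD^4\MD\upeta\|_{L^2}^2$ is in $\norm(\tau)$. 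Cauchy--Schwarz in $\sigma$ and squaring produces the bound $\mathring{M}+\tau P(\norm(\tau))$. For \eqref{E:FLATMAINVORTICITYESTIMATES}, the identity $\Flatvort\partial_\tau^{2p}\underline{\upeta}=\Lagvort\partial_\tau^{2p}\upeta|_{\mathrm{spatial}}+(\mathrm{correction})$ holds with correction of order $\|\mathscr{A}-\mathrm{Id}\|$, which is $O(\tau)$ by FTC (since $\mathscr{A}_j^i|_{\tau=0}=\delta_j^i$ and $\mathscr{A}_j^0|_{\tau=0}=0$ by~\eqref{E:INITIALINVERSECHOVMATRIX}); the $\Lagvort$ piece is handled by the same reduction as above, but the lower Sobolev index $3-p$ permits the use of the unweighted~\eqref{hss2} in place of~\eqref{hss1} with no $\mathring{F}$-weighting required, so everything collapses to $\mathring{M}+\tau P(\norm(\tau))$.

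The main obstacle is the commutator bookkeeping in the weighted case $p\geq 1$: one must carefully identify which Leibniz products land on a top-order derivative of $\upeta$ and hence require the $\mathring{F}$-smallness mechanism to produce the critical $C\varepsilon$ prefactor, versus those that are strictly below top order and absorbed by FTC. The rewriting that reduces to Lemma~\ref{L:VORTICITYLEMMA} shifts a pair of time derivatives from $\upeta$ onto $v$, redistributing derivatives so that one factor can sit precisely at top order; it is the interplay of the $\mathring{F}$-weighted top-order terms in~\eqref{E:NORM} with the pointwise smallness $\|\mathring{F}\|_{L^\infty}\leq\varepsilon$ that produces the final prefactor $(\delta+C\varepsilon)\,\norm(\tau)$ in the target estimate.
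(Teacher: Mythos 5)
Your reduction for the cases $p \ge 1$ of \eqref{E:MAINVORTICITYESTIMATES}, namely rewriting $\partial_{\tau}^{2p}\AngMD^{4-p}\upeta=\partial_{\tau}^{2(p-1)}\AngMD^{4-p}\partial_\tau v$, commuting $\Lagvort$ through, and invoking \eqref{hss1} with $p'=p-1$, is exactly what the paper does. Likewise, for \eqref{E:FLATMAINVORTICITYESTIMATES} with $p\ge 1$ your reduction to \eqref{hss2} matches the paper. Where you genuinely diverge is the case $p=0$ of \eqref{E:MAINVORTICITYESTIMATES}. The paper does \emph{not} integrate in time once and quote the norm: instead it invokes the second-order Taylor identity \eqref{E:ETAVORTICITY} for $\Lagvort\partial_A\upeta$, substitutes \eqref{E:VORTICITYBASIC} into the double time integral, and then performs two integrations by parts in $\tau$ to produce a boundary term of the schematic form $\mathring{F}\AngMD^4 v_\nu v_\mu - \mathring{F}\AngMD^4 v_\mu v_\nu$, whose $g$-contraction is then split via \eqref{E:NORMALIZATIONLAGRANGIAN} into a piece controlled by \eqref{E:WEIGHTEDCONTRACTIONAGAINSTUFIRSTSOBOLEVESTIMATE} and a piece of size $\varepsilon\norm(\tau)$. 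Your single-time-integral argument, which observes that the integrand is dominated by $\mathring{F}\AngMD^4\Lagvort v$ (a summand of $\norm$ by \eqref{E:NORM}) plus $\mathring{F}$-weighted top-order derivatives of $\upeta$ already present in $\norm$, is shorter and yields the strictly stronger bound $\mathring{M}+\tau P(\norm(\tau))$ for $p=0$. Both are valid within the paper's bootstrap framework; the paper's route emphasizes obtaining the vorticity bound ``independently'' via the fluid equations (the closing remark of Sect.~\ref{S:VORTICITY}'s introduction), whereas yours spends one of the norm's slots, $\|\mathring{F}\AngMD^4\Lagvort v\|_{L^2}^2$, to shortcut the argument.

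Two points merit tightening. First, when you write that commuting $\AngMD^4$ past $\Lagvort$ produces $\AngMD^4\Lagvort v$ ``modulo strictly lower-order terms'', this is not literally true: the piece $\AngMD^4\mathscr{A}^K_\mu\,\partial_K v_\nu$ of the commutator involves $\AngMD^4\ITIMESMD\upeta$, which sits at the \emph{same} regularity level as the $\mathring{F}$-weighted top-order summands $\|\mathring{F}\AngMD^4\MD\upeta\|_{L^2}$ and $\|\sqrt{\mathring{F}}\AngMD^4 v\|_{L^2}$. It is controlled after the $\mathring{F}$-weighting (with a $\sqrt{\varepsilon}$ factor in the time-component case), but it is not a lower-order term and should be bookkept under your category (ii), not absorbed by FTC. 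Second, your treatment of \eqref{E:FLATMAINVORTICITYESTIMATES} in the case $p=0$ is underdetermined: \eqref{hss2} only reaches indices $p'=0,1,2$, hence covers $p\ge 1$ but not $p=0$, and your alternative phrase ``the same reduction as above'' cannot mean the single-time-integral argument here because the unweighted integrand would involve $\|\Flatvort v\|_{H^3}$ whose top contribution $\MD^4 v$ is \emph{not} a summand of $\norm$ (only $\|\Lagvort v\|_{H^3}$ is). The paper handles this case with the double time integral of \eqref{E:ETAVORTICITY} plus an integration by parts in time that trades $\MD^4\partial_\tau\upeta$ for $\MD^4\upeta$, which you would need to reproduce — or, alternatively, observe the purely algebraic identity $\mathscr{A}^K_\mu\partial_K\upeta_\nu=g_{\mu\nu}$, which forces $(\Lagvort\upeta)_{\mu\nu}\equiv 0$ and collapses the $\Lagvort$-piece for $p=0$; but some dedicated argument is needed, and your proposal does not supply one.
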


\begin{proof}
{\bf Proof of \eqref{E:MAINVORTICITYESTIMATES}.}
We first prove the estimate in the case $p=0$. 
We start by noting that
$
\mathring{F} \Lagvort \AngMD^4 \upeta
=
\mathring{F} \AngMD^3 \Lagvort \AngMD \upeta
+ O_{L^2(\mathfrak{M}_{\tau})}(\mathring{M} + \tau P(\norm(\tau)))
$
since the difference
$\mathring{F} \Lagvort \AngMD^4 \upeta
- \mathring{F} \AngMD^3 \Lagvort \AngMD \upeta
$
is easy to bound in the norm
$\| \cdot \|_{L^2(\mathfrak{M}_{\tau})}$
via the fundamental theorem of calculus, as we described in
Sect.~\ref{SS:FTCESTIMATES}.
Thus, to prove \eqref{E:MAINVORTICITYESTIMATES} in the case $p=0$,
it suffices to bound
$
\int_{\mathfrak{M}_\tau}\mathring{F}^2 \langle \AngMD^4 \Lagvort \upeta,\AngMD^3 \Lagvort \AngMD \upeta \rangle_g \, dy 
$
by $\leq \mbox{right-hand side of~\eqref{E:MAINVORTICITYESTIMATES}}$.
To proceed, we will use the following identity, 
obtained from repeated use of commutation identities of the form
$([\partial_{\tau}, \Lagvort] \upeta)_{\mu \nu}
= \partial_{\tau} \mathscr{A}_{\mu}^K \partial_K \upeta_{\nu} 
- \partial_{\tau} \mathscr{A}_{\nu}^K \partial_K \upeta_{\mu} 
$
and the fundamental theorem of calculus
and valid for $A=1,2,3$:
\begin{align}
(\Lagvort \partial_A \upeta)_{\mu\nu}
= & (\Lagvort \partial_A\upeta)_{\mu\nu} \big|_{\tau=0} + \tau \partial_A(\Lagvort v)_{\mu\nu} \big|_{\tau=0}  \notag \\
& + \int_0^{\tau}\left(\g_{\tau}\mathscr{A}^K_{\mu}\g_K\partial_A\upeta_{\nu}-\g_{\tau}\mathscr{A}^K_{\nu}\g_{K}\partial_A\upeta_\mu
- \partial_A \mathscr{A}^K_{\mu}\g_Kv_{\nu}+\partial_A \mathscr{A}^K_{\nu}\g_Kv_{\mu}\right) \,d\tau'\notag \\
& + \int_0^{\tau}\int_0^{\tau'}\partial_A \left(\mathcal{B}_{\mu\nu}(\mathscr{A},\ITIMESMD v)
+ (\Lagvort \g_\tau v)_{\mu \nu} \right)\,d\tau''d\tau' \,,
\label{E:ETAVORTICITY}
\end{align}
where 
\begin{align} \label{E:BVORTICITYERRORTERM}
 B_{\mu\nu}(\mathscr{A},\ITIMESMD v) := -\g_{\tau}\mathscr{A}^L_{\mu}\g_Lv^{\nu} + \g_{\tau}\mathscr{A}^L_{\nu}\g_Lv_{\mu}.
\end{align}

We now apply $\AngMD^3$ to \eqref{E:ETAVORTICITY} and consider the relevant cases
$A=1,2$, our goal being to show that all terms $Q$ on $\AngMD^3 (\mbox{right-hand side of \eqref{E:ETAVORTICITY}})$
verify the bound
$
\left|
\int_{\mathfrak{M}_{\tau}} 
	\mathring{F}^2 \langle Q,Q \rangle_g
\, dy
\right|
\leq \mathring{M} + \tau P(\norm(\tau))$.
Clearly the terms generated by the two products on the first line of right-hand side of~\eqref{E:ETAVORTICITY} satisfy the desired bound.
The terms generated by the single time integral on right-hand side of~\eqref{E:ETAVORTICITY}
can all be treated with the standard Sobolev calculus,
which yields that the corresponding integral 
$
\left|
\int_{\mathfrak{M}_{\tau}} \langle \cdots \rangle_g
\right|
$ 
is
$\leq \mathring{M} + \tau P(\norm(\tau))$.
To handle the terms generated by the double time integral on right-hand side of~\eqref{E:ETAVORTICITY},
we must integrate by parts in time once, as we now explain.

We first consider the double time integral of 
$\AngMD^4 \mathcal{B}_{\mu\nu}$. We need only to consider the top-order terms
since the below-top-order ones are easy to treat using the standard Sobolev calculus.
Specifically, the top-order terms are
\[
\int_0^{\tau}\int_0^{\tau'} \mathring{F}\AngMD^4\g_\tau \mathscr{A} \MD v\,d\tau'' d\tau' \ \ \text{ and } \ 
\int_0^{\tau}\int_0^{\tau'} \mathring{F}\g_\tau \mathscr{A} \AngMD^4D \g_\tau \upeta \,d\tau'' d\tau'.
\]
In the first case, we integrate the $\partial_{\tau}$ derivative away from 
$\AngMD^4 \partial_{\tau} \mathscr{A}$ and in the second case 
we integrate it away from $\AngMD^4 \MD \partial_{\tau} \upeta$. 
We then use the standard Sobolev calculus
to obtain that
$
\left\|
	\int_0^{\tau}\int_0^{\tau'} \mathring{F}\AngMD^4\g_\tau \mathscr{A} \MD v\,d\tau'' d\tau'
\right\|_{\mathfrak{M}_{\tau}}
\leq \tau P(\norm(\tau))
$,
and similarly for the other double time integral,
from which the desired bounds for the 
integral
$
\left|
\int_{\mathfrak{M}_{\tau}} \langle \cdots \rangle_g
\right|
$
of interest easily follow. 
We emphasize that the presence of the double integration in $\tau$ is crucial 
for obtaining the factor of $\tau$ in the previous inequality.
The integral
$
\int_{\mathfrak{M}_{\tau}} \langle \cdots \rangle_g
$
generated by the term $\int_0^{\tau}\int_0^{\tau'} \mathring{F} \AngMD^4 (\Lagvort \partial_{\tau} v)_{\mu\nu} \,d\tau'' d\tau'$ 
(which corresponds to the last term on the right-hand side of \eqref{E:ETAVORTICITY})
is a bit more difficult to handle. We first split it into two pieces:
\begin{align}
\int_0^{\tau}\int_0^{\tau'} \mathring{F} \AngMD^4 (\Lagvort \partial_{\tau} v)_{\mu\nu} \,d\tau'' d\tau' =& 
\int_0^{\tau}\int_0^{\tau'} \mathring{F} \AngMD^4 \left( (\Lagvort \partial_{\tau} v)_{\mu\nu} 
- (\g_{\tau}^2v_{\nu}v_{\mu}-\g_{\tau}^2v_{\mu}v_{\nu})\right)\,d\tau'' d\tau'  \notag \\
 &+ \int_0^{\tau}\int_0^{\tau'} \mathring{F} \AngMD^4\left( \g_{\tau}^2v_{\nu}v_{\mu}-\g_{\tau}^2v_{\mu}v_{\nu}\right)\,d\tau'' d\tau' 
 \notag \\
 & =: I_1+I_2.
\end{align}
To handle the integrals corresponding to $I_1$, we first rewrite the integrand 
by using equation \eqref{E:VORTICITYBASIC} for substitution.
We can then bound
$
\left|
\int_{\mathfrak{M}_{\tau}} 
	\langle I_1,I_1 \rangle_g
\, dy
\right|
$
by using the standard Sobolev calculus and,
to handle some of the integrals,
by integrating by parts in time as before.
To bound 
$
\left|
\int_{\mathfrak{M}_{\tau}} 
	\langle I_2,I_2 \rangle_g
\, dy
\right|
$,
we again note that it suffices to 
bound the integral 
$
\left|
\int_{\mathfrak{M}_{\tau}} \langle \cdots \rangle_g
\right|
$
generated by the highest-order part
\begin{align} \label{E:TRICKY0}
	\int_0^{\tau}\int_0^{\tau'}
	\mathring{F} \left(\AngMD^4\g_\tau^2 v^\nu v_\mu - \AngMD^4\g_\tau^2 v_\mu v^\nu\right)  d\tau'' d\tau'.
\end{align}
We now integrate by parts in time twice in \eqref{E:TRICKY0} to obtain the identity
\begin{align}
&  \int_0^{\tau}\int_0^{\tau'} \mathring{F}  
\left(\AngMD^4\g_\tau^2 v_\nu v_\mu - \AngMD^4\g_\tau^2 v_\mu v_\nu\right)  d\tau'' d\tau' \notag \\
&  = \int_0^\tau \mathring{F}  \left(\AngMD^4\g_\tau v_\nu v_\mu - \AngMD^4\g_\tau v_\mu v_\nu\right) \,d\tau'
 - \int_0^{\tau}\int_0^{\tau'} \mathring{F} 
 \left(\AngMD^4\g_\tau v_\nu \g_\tau v_\mu + \AngMD^4\g_\tau v_\mu \g_\tau v_\nu\right)  \,d\tau'' \,d\tau' \notag  \\
 & = \mathring{F} \AngMD^4v_\nu v_\mu - \mathring{F} \AngMD^4 v_\mu v_\nu  
 	\notag \\
 & \ \
	 	-  \int_0^\tau \mathring{F} \left(\AngMD^4 v_\nu \g_\tau v_\mu - \AngMD^4v_\mu \g_\tau v_\nu\right) \,d\tau'
 	- \int_0^{\tau}\int_0^{\tau'} \mathring{F} \left(\AngMD^4\g_\tau v_\nu \g_\tau v_\mu + \AngMD^4\g_\tau v_\mu \g_\tau v_\nu\right)  \,d\tau'' \,d\tau'. \label{E:TRICKY}
\end{align}
We can bound the integrals 
$
\left|
\int_{\mathfrak{M}_{\tau}} \langle \cdots \rangle_g
\right|
$
corresponding to the two integrals on the last line of right-hand side of~\eqref{E:TRICKY} 
by using the standard Sobolev calculus.
To bound the integral 
$
\left|
\int_{\mathfrak{M}_{\tau}} \langle \cdots \rangle_g
\right|
$
generated by the remaining 
(non-time integrated)
terms on right-hand side of~\eqref{E:TRICKY},
we first use \eqref{E:NORMALIZATIONLAGRANGIAN} to derive the identity
\begin{align*}
\left(\AngMD^4v_\nu v_\mu - \AngMD^4 v_\mu v_\nu\right)\left(\AngMD^4v^\nu v^\mu - \AngMD^4 v^\mu v^\nu \right)
 = - 2 \langle \AngMD^4 v,\AngMD^4 v\rangle_g - 2 (\AngMD^4 v_\mu v^\mu)^2.
\end{align*}
Thus, using \eqref{E:WEIGHTEDCONTRACTIONAGAINSTUFIRSTSOBOLEVESTIMATE}, we deduce
\be\label{E:AUXBOUND2}
\int_{\mathfrak{M}_{\tau}}\mathring{F}^2 (\AngMD^4 v_\mu v^\mu)^2\,dy \le \mathring{M} + \tau P(\norm(\tau)).
\ee
Finally, using the smallness assumption~\eqref{E:SMALLDENSITY},  
we deduce the desired bound as follows:
\begin{align} \label{E:AUXBOUND2.1}
\left|
\int_{\mathfrak{M}_{\tau}}
	\mathring{F}^2 
	\langle\AngMD^4 v,\AngMD^4 v\rangle_g
\,dy
\right|
& \lesssim \|\mathring{F} \|_{L^{\infty}(\mathfrak{M}_{\tau})} 
\left\|
	\sqrt{\mathring{F}}\AngMD^4v
\right\|_{L^2(\mathfrak{M}_{\tau})}^2 
\lesssim \varepsilon \norm(\tau.)
\end{align}
We have thus shown that
$
\left|
\int_{\mathfrak{M}_{\tau}} 
	\langle I_2,I_2 \rangle_g
\, dy
\right|
\leq \mathring{M} + C\left(\delta+\varepsilon\right)\norm(\tau)+ \tau P(\norm(\tau))
$,
which completes the proof of~\eqref{E:MAINVORTICITYESTIMATES} in the case $p=0$.
\ \\
\noindent
{\em Cases $p=1,2,3,4$.}
To prove \eqref{E:MAINVORTICITYESTIMATES} in these cases,
we note that it suffices to instead bound the related term
\begin{align*} 
\left|
	\int_{\mathfrak{M}_\tau}
		\mathring{F}^2 
		\langle 
			\partial_{\tau}^{2(p-1)} \Lagvort \AngMD^{4-p} \partial_{\tau} v,
			\partial_{\tau}^{2(p-1)}\Lagvort \AngMD^{4-p} \partial_{\tau} v
		\rangle_g 
	\, dy
\right|
 \end{align*} 
by the $ \mbox{right-hand side of~\eqref{E:MAINVORTICITYESTIMATES}}$;
the difference between the two terms 
is lower-order and is therefore easy to bound via the fundamental theorem of calculus,
as we described in Sect.~\ref{SS:FTCESTIMATES}.
The desired bounds are a simple consequence of the already proven estimate \eqref{hss1}.

\medskip

\noindent
{\bf Proof of \eqref{E:FLATMAINVORTICITYESTIMATES}.}
The starting point of the proof is again the identity~\eqref{E:ETAVORTICITY}.
We first address the case $p=0$, and we aim to prove the following preliminary
estimate for $\mu,\nu = 0,1,2,3$:
\begin{align} \label{E:FULLVORTH3BOUND}
\|(\Lagvort \upeta)_{\mu\nu}\|_{H^3(\mathfrak{M}_{\tau})}^2 \le \mathring{M} + \tau P(\norm(\tau)).
\end{align}
To prove \eqref{E:FULLVORTH3BOUND}, 
we start by noting that
it suffices to bound the square norm
$\| \cdot \|_{H^2(\mathfrak{M}_{\tau})}^2$
of $(\Lagvort \partial_A \upeta)_{\mu\nu}$
for $A = 1,2,3$; the remaining terms
that must be bounded to control
$\|(\Lagvort \upeta)_{\mu\nu} \|_{H^3(\mathfrak{M}_{\tau})}^2$
are lower-order and thus are
easy to bound via the fundamental theorem of calculus,
as we described in Sect.~\ref{SS:FTCESTIMATES}.
We now note that the square norm $\| \cdot \|_{H^2(\mathfrak{M}_{\tau})}^2$
of the terms on the first two lines of right-hand side of~\eqref{E:ETAVORTICITY} are easy to bound 
in the square norm $\| \cdot \|_{H^2(\mathfrak{M}_{\tau})}^2$
by $\leq \mathring{M} + \tau P(\norm(\tau))$ via the standard Sobolev calculus.
To treat the terms generated by the double time integral of
the term $\partial_A \left(\mathcal{B}_{\mu\nu}(\mathscr{A},\ITIMESMD v) \right)$ on right-hand side of~\eqref{E:ETAVORTICITY}, 
we again note that it suffices to bound the 
square norm $\| \cdot \|_{L^2(\mathfrak{M}_{\tau})}^2$
of the top-order part, which is of the form
\begin{align*}
\int_0^{\tau}\int_0^{\tau'} \MD^4 \partial_{\tau} \upeta \MD \partial_{\tau} \upeta \mathscr{A} \mathscr{A} d\tau'' d\tau'.
\end{align*}
We now integrate by parts in time to remove the 
time derivative off of the factor 
$\MD^4 \partial_{\tau} \upeta$.
Then the same arguments we used in treating the double time
integrals that we encountered in the proof of \eqref{E:MAINVORTICITYESTIMATES}
allow us to deduce that
\[
\left\|
	\int_0^{\tau}\int_0^{\tau'} \MD^4 \partial_{\tau} \upeta \MD \partial_{\tau} \upeta \mathscr{A} \mathscr{A} d\tau'' d\tau'
\right\|_{L^2(\mathfrak{M}_{\tau})}^2
\leq \tau P(\norm(\tau))
\]
as desired, where the smallness factor $\tau$ comes from the second time integral.
Using a similar argument,
we bound the terms generated by the double time integral of
the term $\MD^2 \partial_A (\Lagvort \g_\tau v)_{\mu\nu}$ 
generated by the last term on right-hand side of~\eqref{E:ETAVORTICITY}.
For this estimate, we use~\eqref{E:VORTICITYBASIC} to substitute for
$(\Lagvort \g_\tau v)_{\mu\nu}$.
Combining the above bounds, 
we conclude the desired estimate \eqref{E:FULLVORTH3BOUND}.


We will now use \eqref{E:FULLVORTH3BOUND} to obtain the desired bound \eqref{E:FLATMAINVORTICITYESTIMATES}
in the case $p=0$. We start with the following algebraic decomposition for $i,j\in\{1,2,3\}$, which 
follows easily from Def.~\ref{D:DIFFERENTIALOPERATORS}:
\begin{align} \label{E:VORTICITYRELATIONSHIPS}
(\Flatvort \upeta)_{ij} = (\Lagvort \upeta)_{ij} -(\mathscr{A}^K_i-\delta^K_i)\g_K\upeta_j 
+ \left(\mathscr{A}^K_j - \delta^K_j\right)\g_K\upeta_i. 
\end{align}
We bound the square norm $\| \cdot \|_{H^3(\mathfrak{M}_{\tau})}^2$
of the first term on right-hand side of~\eqref{E:VORTICITYRELATIONSHIPS} via \eqref{E:FULLVORTH3BOUND}.
To bound the square norm $\| \cdot \|_{H^3(\mathfrak{M}_{\tau})}^2$
of the last two terms on right-hand side of~\eqref{E:VORTICITYRELATIONSHIPS}
by $\leq \tau P(\norm(\tau))$,
we use the simple estimate
$
\left\| 
	\ITIMESMD \upeta
\right\|_{H^3(\mathfrak{M}_{\tau})}^2
\leq \norm(\tau)
$,
and, to gain the smallness factor $\tau$, 
the estimates
$
\left\| 
	\mathscr{A}^K_i-\delta^K_i 
\right\|_{L^{\infty}(\mathfrak{M}_{\tau})}
\leq \tau P(\norm(\tau))
$
(for $i= 1,2,3$ and $K=0,1,2,3$); 
these $L^{\infty}(\mathfrak{M}_{\tau})$ estimates
are a simple consequence of the fundamental theorem of calculus,
the standard Sobolev calculus, 
and the initial conditions 
\eqref{E:INITIALINVERSECHOVMATRIX}.
We have thus proved \eqref{E:FLATMAINVORTICITYESTIMATES}
in the case $p=0$.

{\em Cases $p=1,2,3,4$.}
To handle these cases,
we aim to first prove the following preliminary estimate
for $\mu,\nu = 0,1,2,3$,
in analogy with \eqref{E:FULLVORTH3BOUND}:
\begin{align} \label{E:FULLVORTBOUND}
\|\Lagvort (\partial_{\tau}^{2p} \upeta)_{\mu\nu} \|_{H^{3-p} (\mathfrak{M}_{\tau})}^2 
\leq \mathring{M} + \tau P(\norm(\tau)).
\end{align}
Once we prove \eqref{E:FULLVORTBOUND},
the desired bound \eqref{E:FLATMAINVORTICITYESTIMATES}
follows easily by using the same argument
given just above. To prove \eqref{E:FULLVORTBOUND},
we start by noting that it would follow from
\begin{align} \label{E:ALTERNATEFULLVORTBOUND}
\| \partial_{\tau}^{2(p-1)} (\Lagvort \partial_{\tau} v)_{\mu\nu} \|_{H^{3-p}(\mathfrak{M}_{\tau})}^2 
\leq \mathring{M} + \tau P(\norm(\tau));
\end{align}
the remaining terms that must be bounded to control
$
\|(\Lagvort \partial_{\tau}^{2p} \upeta)_{\mu\nu} \|_{H^{3-p} (\mathfrak{M}_{\tau})}^2 
$
are lower-order and thus are
easy to bound via the fundamental theorem of calculus,
as we described in Sect.~\ref{SS:FTCESTIMATES}.
We have already proved the desired bound \eqref{E:ALTERNATEFULLVORTBOUND}
as inequality \eqref{hss2}.
This completes our proof of \eqref{E:FLATMAINVORTICITYESTIMATES}
and therefore that of Prop.~\ref{P:VORTICITYESTIMATES}.
\end{proof}

\section{Energy estimates for horizontal derivatives and time derivatives}\label{S:ENERGY}
In this section, we derive energy estimates for the horizontal and time derivatives of the solution
up to top order. In particular, we do \emph{not} commute the evolution equations with the $\partial_3$
operator in this section. We provide the main energy estimates in Prop.~\ref{P:ENERGYLEMMA}.
The estimates rely on the bounds for the vorticity that we have independently obtained
in Prop.~\ref{P:VORTICITYESTIMATES}.

We start with the following preliminary lemma,
which shows that the $g$-norm and $h$-norm of the relevant energy integrands
are equal up to controlled error terms.

\begin{lemma}\label{L:EQUIVALENCE}
Let $\varepsilon$ be as in~\eqref{E:SMALLDENSITY}.
Under the bootstrap assumptions of Sect.~\ref{SS:BOOTSTRAPASSUMPTIONS},
we have the following estimates 
for $\tau \in [0,T]$.
\begin{enumerate}
\item For $p\in\{0,1,2,3,4\}$, we have
\begin{align}
\mathring{F} \langle \partial_{\tau}^{2p}\AngMD^{4-p} v,\partial_{\tau}^{2p}\AngMD^{4-p} v\rangle_g 
& = \mathring{F} \langle \partial_{\tau}^{2p}\AngMD^{4-p} v,\partial_{\tau}^{2p}\AngMD^{4-p} v\rangle_h 
+ O_{L^1(\mathfrak{M}_{\tau})}(\mathring{M} + \tau P(\norm(\tau))).
\label{E:MA1}
\end{align}
\item 
For $p\in\{0,1,2,3,4\}$, we have
\begin{align}
\mathring{F}^2 \langle\Lagdiff \partial_{\tau}^{2p}\AngMD^{4-p} \upeta,\Lagdiff\partial_{\tau}^{2p}\AngMD^{4-p} \upeta\rangle_g 
& = \mathring{F}^2 \langle\Lagdiff \partial_{\tau}^{2p}\AngMD^{4-p} \upeta,\Lagdiff\partial_{\tau}^{2p}\AngMD^{4-p} \upeta\rangle_h 
	\notag \\
& \ \
	+ O_{L^1(\mathfrak{M}_{\tau})}(\mathring{M} + \varepsilon \norm(\tau) + \tau P(\norm(\tau))). \label{E:NA1}
\end{align}
\end{enumerate}
\end{lemma}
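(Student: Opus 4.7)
The proof rests on the algebraic identity $h_{\mu\nu} - g_{\mu\nu} = 2 v_\mu v_\nu$, and correspondingly $h^{\mu\nu} - g^{\mu\nu} = 2 v^\mu v^\nu$ (cf.~\eqref{E:HINVERSEMETRIC}). In particular, for any vector $X$,
\begin{equation*}
\langle X, X \rangle_h - \langle X, X \rangle_g = 2 (v_\mu X^\mu)^2.
\end{equation*}
Part (1) follows by applying this with $X = \partial_\tau^{2p}\AngMD^{4-p} v = \partial_\tau^{2p+1}\AngMD^{4-p} \upeta$: the pointwise difference with weight $\mathring{F}$ equals $2(\sqrt{\mathring{F}}\, v_\mu \partial_\tau^{2p+1}\AngMD^{4-p} \upeta^\mu)^2$. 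Its $L^1(\mathfrak{M}_\tau)$-norm is $2\|\sqrt{\mathring{F}}\, v_\mu \partial_\tau^{2p+1}\AngMD^{4-p} \upeta^\mu\|_{L^2(\mathfrak{M}_\tau)}^2$, which is bounded by $\mathring{M} + C\tau \norm(\tau)$ by~\eqref{E:WEIGHTEDCONTRACTIONAGAINSTUFIRSTSOBOLEVESTIMATE}, yielding the desired form.

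For part (2), letting $\Psi := \partial_\tau^{2p}\AngMD^{4-p} \upeta$, the plan is to expand the quadratic-form difference $\langle \Lagdiff \Psi, \Lagdiff \Psi \rangle_h - \langle \Lagdiff \Psi, \Lagdiff \Psi \rangle_g$ using $h^{-1} = g^{-1} + 2 v \otimes v$; each extra factor of $v^\alpha$ so introduced contracts against either a $\Lagdiff$-index or an $X$-index of $\Lagdiff \Psi$. Crucially, by~\eqref{E:FOURVELOCITYAPPEARSINCHOVMATRIX} we have $v^\alpha \Lagdiff_\alpha = v^\alpha \mathscr{A}^K_\alpha \partial_K = \delta^K_0 \partial_K = \partial_\tau$, so every contraction of $v$ with a $\Lagdiff$-index collapses to a time derivative. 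The resulting expansion schematically yields two classes of products:
\begin{equation*}
\text{(i)}\quad \mathring{F}^2 (\partial_\tau \Psi_\mu)(v^\nu \Lagdiff^\mu \Psi_\nu)\,, \qquad \text{(ii)}\quad \mathring{F}^2 (v^\mu \partial_\tau \Psi_\mu)^2\,.
\end{equation*}
Term (ii) is handled directly: it equals $(\mathring{F}\, v^\mu \partial_\tau^{2p+1}\AngMD^{4-p} \upeta_\mu)^2$, whose $L^1$-norm is $\leq \|\sqrt{\mathring{F}}\|_{L^\infty}^2 \|\sqrt{\mathring{F}}\, v_\mu \partial_\tau^{2p+1}\AngMD^{4-p} \upeta^\mu\|_{L^2}^2 \leq \varepsilon(\mathring{M} + C\tau \norm(\tau))$ by~\eqref{E:SMALLDENSITY} and~\eqref{E:WEIGHTEDCONTRACTIONAGAINSTUFIRSTSOBOLEVESTIMATE}, producing the admissible $\varepsilon\, \norm(\tau)$ error.

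For the cross term (i), Cauchy--Schwarz gives an estimate $\leq \|\mathring{F} \partial_\tau \Psi\|_{L^2} \|\mathring{F}\, v \cdot \Lagdiff \Psi\|_{L^2}$. The first factor is $\leq \|\sqrt{\mathring{F}}\|_{L^\infty} \|\sqrt{\mathring{F}}\, \partial_\tau^{2p+1}\AngMD^{4-p} \upeta\|_{L^2} \leq \sqrt{\varepsilon}\sqrt{\norm(\tau)}$ by~\eqref{E:SMALLDENSITY} and the definition of $\norm$. For the second, expanding $\Lagdiff \Psi = \mathscr{A}^{K\cdot} \partial_K \Psi$ splits it into the $K=0$ piece $\mathring{F}\, v \cdot \partial_\tau^{2p+1}\AngMD^{4-p} \upeta$, bounded via~\eqref{E:WEIGHTEDCONTRACTIONAGAINSTUFIRSTSOBOLEVESTIMATE} together with an extra $\sqrt{\varepsilon}$ from the $\mathring{F}$-weight mismatch, and the $K \in \{1,2,3\}$ pieces $\mathring{F}\, v \cdot \partial_\tau^{2p}\AngMD^{4-p} \MD \upeta$, bounded directly by~\eqref{E:TOPORDERWEIGHTEDCONTRACTIONAGAINSTUFIRSTSOBOLEVESTIMATE}; together, this factor is $\leq \sqrt{\mathring{M} + C\tau \norm(\tau)}$. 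A weighted Young inequality $\sqrt{\varepsilon}\,ab \leq \tfrac{\varepsilon}{2} a^2 + \tfrac{1}{2} b^2$ then closes the estimate in the required form $\mathring{M} + \varepsilon \norm(\tau) + \tau P(\norm(\tau))$. The main obstacle is exactly this bookkeeping of contraction patterns and $\mathring{F}$-weights; the crucial structural input is that every pairing of $v$ with a $\Lagdiff$-index collapses to $\partial_\tau$, so that all resulting factors land inside the contraction bounds of Lemma~\ref{L:CONTRACTIONAGAINSTUFIRSTSOBOLEVESTIMATE} (rather than producing genuinely top-order spatial-gradient terms outside of $\norm(\tau)$).
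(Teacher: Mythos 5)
Your proof is essentially correct and follows the same route as the paper: expand $h\otimes h^{-1}$ using $h_{\mu\nu}-g_{\mu\nu}=2v_\mu v_\nu$, exploit the key structural identity $v^\alpha\Lagdiff_\alpha=v^\alpha\mathscr{A}^K_\alpha\partial_K=\partial_\tau$, and then control the resulting pieces via the $v$-contraction bounds of Lemma~\ref{L:CONTRACTIONAGAINSTUFIRSTSOBOLEVESTIMATE} together with the $\mathring{F}$-smallness from~\eqref{E:SMALLDENSITY}. One small bookkeeping point: the ``cross term'' you label (i), of the schematic form $(\partial_\tau\Psi_\mu)(v^\nu\Lagdiff^\mu\Psi_\nu)$, does not actually occur in the expansion of $\langle\Lagdiff\Psi,\Lagdiff\Psi\rangle_h-\langle\Lagdiff\Psi,\Lagdiff\Psi\rangle_g$; the two non-quartic pieces are the diagonal ones $2\langle\partial_\tau\Psi,\partial_\tau\Psi\rangle_g$ (both $v$'s contracting $\Lagdiff$-indices) and $2g^{\beta\widetilde\beta}(v_\alpha\Lagdiff_\beta\Psi^\alpha)(v_{\widetilde\alpha}\Lagdiff_{\widetilde\beta}\Psi^{\widetilde\alpha})$ (both $v$'s contracting $X$-indices), exactly as in the paper's identity~\eqref{E:expandingh2}. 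Your Cauchy--Schwarz/Young analysis nonetheless already produces separate bounds on $\|\mathring{F}\partial_\tau\Psi\|_{L^2}^2\lesssim\varepsilon\norm(\tau)$ and $\|\mathring{F}\,v\cdot\Lagdiff\Psi\|_{L^2}^2\lesssim\mathring{M}+\tau P(\norm(\tau))$, which are precisely what is needed to estimate those two diagonal pieces directly (no Young inequality is then required), so the argument closes after this trivial reorganization.
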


\begin{proof}
We first prove \eqref{E:MA1}.
Recalling the definition~\eqref{E:HMETRIC} of the metric $h$,
we see that
\begin{align}\label{E:part1}
\langle \partial_\tau^{2p}\AngMD^{4-p} v, \, \partial_{\tau}^{2p}\AngMD^{4-p} v \rangle_h  
& = \langle \partial_\tau^{2p}\AngMD^{4-p} v, \, \partial_{\tau}^{2p}\AngMD^{4-p} v \rangle_g  + 2 (v_{\alpha} \partial_\tau^{2p}\AngMD^{4-p}v^{\alpha})^2. 
\end{align}
The desired bound \eqref{E:MA1} now follows as a simple consequence of
\eqref{E:part1} and \eqref{E:WEIGHTEDCONTRACTIONAGAINSTUFIRSTSOBOLEVESTIMATE}.

We now prove \eqref{E:NA1}. We first use \eqref{E:HMETRIC}-\eqref{E:HINVERSEMETRIC}, 
the identity \eqref{E:FOURVELOCITYAPPEARSINCHOVMATRIX},
and the second identity in \eqref{E:CHAINRULES}
to compute the following
identity, valid for any vectorfield $X$:
\begin{align}
\langle \Lagdiff X , \Lagdiff X \rangle_h
& := h_{\alpha \widetilde{\alpha}} 
				(h^{-1})^{\beta \widetilde{\beta}}  
				\Lagdiff X_{\beta}^{\alpha} 
				\Lagdiff X_{\widetilde{\beta}}^{\widetilde{\alpha}} 
		= \langle \Lagdiff X , \Lagdiff X \rangle_g + 2 \langle \g_\tau X, \g_\tau X\rangle_g \notag
			\\
	&				
	+ 		  2v_{\alpha}v_{\widetilde{\alpha}}g^{\beta\widetilde{\beta}}\mathscr{A}^K_{\beta}\mathscr{A}^L_{\widetilde{\beta}}\g_KX^{\alpha}\g_LX^{\widetilde{\alpha}} 
				+2\left(v_{\alpha}\g_{\tau}X^{\alpha}\right)^2. \label{E:expandingh2}
\end{align}
We now set $X := \partial_{\tau}^{2p}\AngMD^{4-p} \upeta$.
From the smallness assumption~\eqref{E:SMALLDENSITY},
we deduce the bound
$
2 \left|
	\mathring{F}^2
	\int_{\mathfrak{M}_{\tau}} 
		\langle \g_\tau X, \g_\tau X\rangle_g
	\, dy
	\right|
	\lesssim \varepsilon \norm(\tau)
$.
The desired bound \eqref{E:NA1} now follows as a simple consequence of the previous bound,
\eqref{E:expandingh2},
\eqref{E:TOPORDERWEIGHTEDCONTRACTIONAGAINSTUFIRSTSOBOLEVESTIMATE},
and the estimate 
$
\|
	\mathscr{A}^K_{\beta}
\|_{L^{\infty}(\mathfrak{M}_{\tau})}
\leq \mathring{M} + \tau P(\norm(\tau))
$,
which is an easy consequence of the fundamental theorem of calculus and Sobolev embedding.
\end{proof}


In the next proposition, we derive our main energy estimates.

\begin{proposition}\label{P:ENERGYLEMMA}
Let $\delta > 0$ be a constant and let $\varepsilon$ be as in \eqref{E:SMALLDENSITY}.
Under the bootstrap assumptions of Sect.~\ref{SS:BOOTSTRAPASSUMPTIONS},
we have the following estimates for $\tau \in [0,T]$
(where $\mathring{M}$ is allowed to depend on $\delta^{-1}$):
\begin{align} \label{E:ENERGYBOUND}
\mathcal{E}(\tau) 
& \leq C \norm(0)
+ (\delta + C \sqrt{\varepsilon}) \norm(\tau)
+ \tau P(\norm(\tau)).
\end{align}
\end{proposition}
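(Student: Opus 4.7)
The plan is to derive, for each $p \in \{0, 1, 2, 3, 4\}$, a differential energy identity of the form $\frac{d}{d\tau}\mathcal{E}_p \leq (\mathrm{errors})$, where the errors are either of the form $P(\norm(\tau))$ (tame after time integration) or of the form $(\delta + C\sqrt{\varepsilon})\norm(\tau)$, the latter arising from the coupling with the vorticity and from the weight $\mathring{F}$ being small. Integrating in time from $0$ to $\tau$, summing over $p$, and using $\mathcal{E}_p(0) \lesssim \norm(0)$, we obtain \eqref{E:ENERGYBOUND}.

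The first step is to apply $\partial_\tau^{2p}\AngMD^{4-p}$ to the momentum equation \eqref{E:LAGRNAGIANMAINEVOLUTIONEQUATION}, contract with the natural test vector $\partial_\tau^{2p+1}\AngMD^{4-p}\upeta^\mu$ (index raised by $g^{-1}$), and integrate against $dy$ on $\mathfrak{M}_\tau$. Since $v = \partial_\tau\upeta$, the principal contribution from the $\mathring{F}\Enth\partial_\tau v_\mu$ term is already a perfect time derivative modulo the time derivative of the weight $\mathring{F}\Enth$, and Lemma~\ref{L:EQUIVALENCE}(1) allows us to freely replace $\langle\cdot,\cdot\rangle_g$ by $\langle\cdot,\cdot\rangle_h$, thus recovering the kinetic piece of $\mathcal{E}_p$ up to an $L^1(\mathfrak{M}_\tau)$ error of size $\mathring{M} + \tau P(\norm(\tau))$.

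The heart of the argument is the integration by parts in $y^K$ applied to the spatial flux $\partial_K\{\mathring{F}^2\Enth\mathscr{J}^{-1}\mathscr{A}_\mu^K\}$. Using the matrix identities \eqref{E:DETAINVERSEDIFFERENTIATED}--\eqref{E:JACOBIANDETDIFFERNTIATED} to move the commuted operator $\partial_\tau^{2p}\AngMD^{4-p}$ onto $\mathscr{A}_\mu^K$ and $\mathscr{J}^{-1}$, the top-order piece becomes
\[
-\int_{\mathfrak{M}_\tau}\mathring{F}^2\Enth\mathscr{J}^{-1}\,\Lagdiff_\mu\partial_\tau^{2p+1}\AngMD^{4-p}\upeta^\beta\cdot\Lagdiff_\beta\partial_\tau^{2p}\AngMD^{4-p}\upeta^\mu\,dy,
\]
with boundary contributions on $\partial\mathfrak{M}_\tau$ suppressed by the weight $\mathring{F}$ and all other remainders being either Leibniz commutator terms below top order or terms involving $\partial_\tau^{2p}\AngMD^{4-p}\Enth$ that can be handled via the identity $\partial_\tau\Enth = -2\mathring{F}\Enth^{3/2}\mathscr{J}^{-2}\partial_\tau\mathscr{J}$. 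Splitting the integrand into its $(\mu,\beta)$-symmetric and antisymmetric parts is decisive: the symmetric part combines with $\partial_\tau$ of the weight $\mathring{F}^2\Enth\mathscr{J}^{-1}$ and Lemma~\ref{L:EQUIVALENCE}(2) to produce $\frac{d}{d\tau}$ of the spatial-gradient piece of $\mathcal{E}_p$, while the antisymmetric part produces a vorticity integral
\[
\frac{1}{2}\int_{\mathfrak{M}_\tau}\mathring{F}^2\Enth\mathscr{J}^{-1}\,\Lagdiff_\beta\partial_\tau^{2p+1}\AngMD^{4-p}\upeta^\mu\,(\Lagvort\partial_\tau^{2p}\AngMD^{4-p}\upeta)^{\beta}{}_{\mu}\,dy.
\]
This is the crucial point at which the independent vorticity estimate \eqref{E:MAINVORTICITYESTIMATES} enters: after time integration and Cauchy--Schwarz, the product $\sqrt{\mathcal{E}_p(\tau)}\cdot\sqrt{\mathring{M} + (\delta+C\varepsilon)\norm(\tau)}$ is controlled by Young's inequality to yield precisely the $(\delta + C\sqrt{\varepsilon})\norm(\tau)$ contribution in \eqref{E:ENERGYBOUND}. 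The $\sqrt{\varepsilon}$ (rather than $\varepsilon$) thus arises from balancing the two square roots.

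Finally, the third term $-2\mathring{F}^2 v_\mu\mathscr{J}^{-2}\Enth^{3/2}\partial_\tau\mathscr{J}$ of \eqref{E:LAGRNAGIANMAINEVOLUTIONEQUATION}, once commuted with $\partial_\tau^{2p}\AngMD^{4-p}$ and tested, combines with the contributions from the time-differentiated weights $\mathscr{J}^{-1}$ and $\Enth$ in the previous two steps to yield $\frac{d}{d\tau}$ of the Jacobian piece $\frac{1}{2}\int\mathring{F}^2(1+f)(1-f)^{-3}\mathscr{J}^{-3}(\partial_\tau^{2p}\AngMD^{4-p}\mathscr{J})^2\,dy$ of $\mathcal{E}_p$, using the commuted version of \eqref{E:PARTIALTAUJISDETERMINEDINTERMSOFDIVU}. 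The main obstacle will be the bookkeeping of all the commutators and lower-order terms generated by the Leibniz expansion of $\partial_\tau^{2p}\AngMD^{4-p}$ applied to the nonlinear fluxes, and the verification that each such term falls into one of three controllable classes: (i) terms bounded by $\tau P(\norm(\tau))$ via the fundamental theorem of calculus (Sect.~\ref{SS:FTCESTIMATES}); (ii) top-order terms carrying an extra factor of $\mathring{F}^{1/2}$ or $v_\alpha\partial_\tau^{2p}\ITIMESMD\upeta^\alpha$, controlled by \eqref{E:SMALLDENSITY} and Lemma~\ref{L:CONTRACTIONAGAINSTUFIRSTSOBOLEVESTIMATE}; or (iii) the vorticity source absorbed by Prop.~\ref{P:VORTICITYESTIMATES}. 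Collecting the identity, integrating over $[0,\tau]$, and summing over $p$ produces \eqref{E:ENERGYBOUND}.
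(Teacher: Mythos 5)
Your proposal is correct and follows essentially the same route as the paper: commute $\partial_\tau^{2p}\AngMD^{4-p}$ through \eqref{E:LAGRNAGIANMAINEVOLUTIONEQUATION}, contract against $g_{\mu\nu}\dot v^\nu$, integrate by parts in $y^K$ (with the $\mathring F$ weight killing the $\partial\mathfrak{M}$ boundary terms), split the gradient quadratic form into its symmetric and antisymmetric parts via \eqref{E:SPLITGRADIENTINTOSYMMETRICANDANTISYMMETRIC} so that the antisymmetric piece becomes a vorticity integral controlled independently by Prop.~\ref{P:VORTICITYESTIMATES}, and pass from $\langle\cdot,\cdot\rangle_g$ to $\langle\cdot,\cdot\rangle_h$ via Lemma~\ref{L:EQUIVALENCE}. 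One small misattribution: in the paper the $\sqrt{\varepsilon}$ in the coefficient of $\norm(\tau)$ does not come from Cauchy--Schwarz applied to the vorticity integral (which, by Prop.~\ref{P:VORTICITYESTIMATES}, contributes $(\delta+C\varepsilon)\norm(\tau)$ directly). It comes instead from the temporal boundary cross-terms produced when the $K=0$ spatial flux and the divergence flux are integrated over $[0,\tau]\times\mathfrak{M}$, namely $\int_{\mathfrak{M}_\tau}\mathring F^2 \Enth\mathscr{J}^{-1}\mathscr{A}^0_\beta\mathscr{A}^L_\alpha\g_L\dot\upeta^\beta\dot v^\alpha\,dy$ and its analogue with $\dot{\mathscr J}$; there a weighted Young/H\"older argument using $\|\mathring F\|_{L^\infty}\leq\varepsilon$ against the two different homogeneities of $\mathring F$ in $\norm$ gives $\delta+C\sqrt{\varepsilon}$. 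This is a bookkeeping point rather than a gap.
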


\begin{proof}
For $p\in\{0,1,2,3,4\}$, we apply $\partial_{\tau}^{2p}\AngMD^{4-p}$ to the evolution equation~\eqref{E:LAGRNAGIANMAINEVOLUTIONEQUATION}. 
A straightforward application of the product rule leads to the following equations for $\mu=0,1,2,3$:
\begin{align}\label{E:DIFFERENTIATING}
\mathring{F} \Enth \partial_{\tau}^{2p+1}\AngMD^{4-p} v^{\mu} 
+ g^{\mu\alpha}\g_A\left(\mathring{F}^2\partial_{\tau}^{2p}\AngMD^{4-p} \mathscr{A}_{\alpha}^A \Enth \mathscr{J}^{-1}+\mathring{F}^2\mathscr{A}^A_{\alpha} \partial_{\tau}^{2p}\AngMD^{4-p}(\Enth \mathscr{J}^{-1})\right)
= \mathcal{R}_{p1}^{\mu},
\end{align}
where the error term $\mathcal{R}_{p1}^{\mu}$ is given explicitly by the following formula:
\begin{align}
\mathcal R^\mu_{p1} 
&= \sum_{p_1+p_2=2p \atop q_1+q_2 = 4-p, p_1+q_1>0} C_{p_1,q_1}\g_\tau^{p_1}\AngMD^{q_1}(\mathring{F} S) \g_\tau^{p_2+1}\AngMD^{q_2} v^\mu \notag \\
& + \sum_{p_1+p_2+p_3 =2p, q_1+q_2+q_3 = 4-p, \atop p_1+2q_1<8, p_3+2q_3<8} g^{\mu\nu}\g_K\left(\g_\tau^{p_1}\AngMD^{q_1}\mathscr{A}^K_\nu \g_\tau^{p_2}\AngMD^{q_2}(\mathring{F}^2) \g_\tau^{p_3}\AngMD^{q_3} (\mathscr{J}^{-1} \Enth)\right) \notag \\
& + 2\partial_{\tau}^{2p}\AngMD^{4-p}\left( v^{\mu}\mathring{F}\mathscr{J}^{-2}\Enth^{3/2}\partial_{\tau}\mathscr{J}\right). \label{E:RA1MU}
\end{align}
To simplify the notion, we set
$\dot{\upeta} := \partial_{\tau}^{2p}\AngMD^{4-p} \upeta$,
$\dot{v} := \partial_{\tau}^{2p}\AngMD^{4-p} v$,
and similarly for other quantities differentiated by
$\partial_{\tau}^{2p}\AngMD^{4-p}$.
To prove \eqref{E:ENERGYBOUND}, 
we will first use an integration by parts argument based on equation \eqref{E:DIFFERENTIATING} to show that
\begin{align}
& \frac{1}{2} \int_{\mathfrak{M}_\tau}  \left\{ \mathring{F} \Enth \langle \dot{v},\dot{v}\rangle_h
+\mathring{F}^2 \Enth \mathscr{J}^{-1}  \left(\langle\Lagdiff \dot{\upeta},\Lagdiff\dot{\upeta}\rangle_h 
+  \mathscr{J}^{-2}\frac{1+f}{1-f} \dot{\mathscr{J}}^2\right)
\right\}\, dy\notag  \\
& =
\frac{1}{2} \int_{\mathfrak{M}_0}  \left\{ \mathring{F} \Enth \langle \dot v,\dot v \rangle_h
+
\mathring{F}^2 \Enth \mathscr{J}^{-1}  \left(\langle\Lagdiff \dot{\upeta},\Lagdiff\dot{\upeta}\rangle_h
+  
\mathscr{J}^{-2}\frac{1+f}{1-f} \dot{\mathscr{J}}^2\right)
	\right\}
	\notag
	\\
& \ \ \ 
+ \frac{1}{2}\int_{\mathfrak{M}_{\tau'}}\mathring{F}^2 \Enth \mathscr{J}^{-1} \langle\Lagvort \,\dot\upeta,\Lagvort\,\dot\upeta\rangle_g 
\,dy \Big|^{\tau'=\tau}_{\tau'=0} \notag \\
& \ \ \ 
+  
\int_{\mathfrak{M}_{\tau'}}\mathring{F}^2  \Enth \mathscr{J}^{-1} \left( \mathscr{A}^0_{\beta}\mathscr{A}^L_{\alpha}  \g_L \dot{\upeta}^{\beta}\dot{v}^{\alpha}
+\mathscr{A}^0_{\alpha}  \mathscr{J}^{-1} \frac{1+f}{1-f}  \dot{\mathscr{J}}\dot{v}^{\alpha}\right) \,dy \Big|^{\tau'=\tau}_{\tau'=0} 
\notag \\
& \ \ \   + \int_{[0, \tau] \times \mathfrak{M}} \mathcal{R}_p\,dy d\tau'
+ O(\tau P(\norm(\tau))), \label{E:ENERGYIDENTITY1} 
\end{align}			
where 
\begin{align}
\mathcal{R}_p 
& := \mathcal R^\mu_{p3} \dot v_\mu + \frac{1}{2}\g_\tau(\mathring{F} \Enth)\langle \dot v,\dot v\rangle_g
+\frac{1}{2} \g_\tau\left(g_{\alpha \widetilde{\alpha}} g^{\beta \widetilde{\beta}}
				\mathscr{A}_{\beta}^K 
				\mathscr{A}_{\widetilde{\beta}}^L\right)\partial_K
				\dot{\upeta}^{\alpha} 
				\partial_L \dot{\upeta}^{\widetilde{\alpha}} 
				\notag \\
				& -\mathscr{A}_{\beta}^L
				\partial_L\dot{\upeta}^{\alpha}\left(-\g_{\tau}\mathscr{A}^K_{\alpha} \partial_K \dot{\upeta}^{\beta}
				+ g^{\beta \widetilde{\beta}}\g_{\tau}\mathscr{A}_{\widetilde{\beta}}^K\partial_K \dot{\upeta}_{\alpha}\right)
				+\frac{1}{2}\g_{\tau}\left(\mathring{F}^2    \Enth \mathscr{J}^{-3} \frac{1+f}{1-f}\right) \dot{\mathscr{J}}^2
				\notag \\
				&				-\mathring{F}^2 \Enth \mathscr{J}^{-3} \frac{1+f}{1-f}  \dot{\mathscr{J}} \mathcal{R}_{p4},
				\label{E:ENERGYESTERR}
\end{align}
with
\begin{align} 
\mathcal R_{p2}
&:= 2\mathscr{J}^{-1} \Enth^{3/2} \sum_{p_1+p_2=2p \atop q_1+q_2 = 4-p, p_1+q_1>0}C_{p_1,q_1}\g_\tau^{p_1}\AngMD^{ q_1}\mathring{F} \g_\tau^{p_2}\AngMD^{ q_2}\left(\mathscr{J}^{-1}\right) 
\notag \\
& + \left(2 \mathscr{J}^{-1}\Enth^{3/2}\mathring{F} + \Enth \right)\left(\partial_{\tau}^{2p}\AngMD^{4-p} \left(\mathscr{J}^{-1}\right) + \mathscr{J}^{-2}\partial_{\tau}^{2p}\AngMD^{4-p} \mathscr{J}\right)  
\notag \\
&+ \mathscr{J}^{-1}\partial_{\tau}^{2p}\AngMD^{4-p} \Enth - 2\mathscr{J}^{-1}\Enth^{3/2} \partial_{\tau}^{2p}\AngMD^{4-p}(\mathring{F} \mathscr{J}^{-1}) 
\notag \\
& +  \sum_{p_1+p_2=2p, \, q_1+q_2 = 4-p \atop 0<p_1+q_1<4-p}C_{p_1,q_1}\g_\tau^{p_1}\AngMD^{ q_1}\left(\mathscr{J}^{-1}\right)\g_\tau^{p_2}\AngMD^{ q_2} \Enth,
\label{E:ENERGYERRORP2}
	\\
\mathcal{R}_{p3}^{\mu} 
& := \mathcal{R}_{p1}^{\mu} - g^{\mu\alpha}\g_K\left(\mathring{F}^2 [\partial_{\tau}^{2p}\AngMD^{4-p},\mathscr{A}^K_{\alpha}]\right) \Enth \mathscr{J}^{-1} - g^{\mu\alpha}\g_K\left(\mathring{F}^2\A^K_\alpha \mathcal R_{p2}\right),
	\label{E:ENERGYESTERR3} \\
\label{E:ENERGYESTERR4}
\mathcal{R}_{p4} 
& := \sum_{p_1+p_2=2p, q_1+q_2 = 4-p\atop p_1+q_1>0} C_{p_1,q_1} \g_\tau^{p_1}\AngMD^{q_1} \Cof^K_\nu \g_\tau^{p_2}\AngMD^{q_2}\g_Kv^\nu.
\end{align}
We will then show that for $p = 0,1,2,3,4$, we have
\begin{align}\label{E:MAINERROR}
\left|
	\int_{[0,\tau] \times\mathfrak{M}} \mathcal{R}_p \,dy d\tau'
\right| 
& \leq \mathring{M} + \delta \norm(\tau) +  \tau P(\norm(\tau))
\end{align}
and also that
\begin{align} \label{E:ENERGYESTIMATEREMAININGERRORTERMS}
	\mbox{all remaining terms on the right-hand side of~\eqref{E:ENERGYIDENTITY1} 
	are $\leq$ \mbox{right-hand side of}~\eqref{E:ENERGYBOUND}},
\end{align}
from which \eqref{E:ENERGYBOUND} easily follows.

Note, that
the case  $p=0$ corresponds to $\AngMD^4$-differentiated problem, while the case $p=4$ corresponds to $\partial_\tau^8$-differentiated problem.  
The analysis for these end-point cases are very similar, but require some minor modifications.

\vspace{.1 in}
\noindent
{\bf Step 1. Proof of \eqref{E:ENERGYIDENTITY1}.} 
We start by decomposing the top-order terms inside the parentheses
on LHS~\eqref{E:DIFFERENTIATING}.
Using~\eqref{E:DETAINVERSEDIFFERENTIATED}, we deduce
\be\label{E:ONE}
\partial_{\tau}^{2p}\AngMD^{4-p} \mathscr{A}_{\alpha}^K 
= - \mathscr{A}^K_{\beta} \mathscr{A}^L_{\alpha} \g_L\partial_{\tau}^{2p}\AngMD^{4-p}\upeta^{\beta} 
+ [\partial_{\tau}^{2p}\AngMD^{4-p},\mathscr{A}^K_{\alpha}],
\ee
where $[\partial_{\tau}^{2p}\AngMD^{4-p},\mathscr{A}^K_{\alpha}]$ denotes the commutator, 
which is given explicitly by the formula
\begin{align*}
 [\partial_{\tau}^{2p}\AngMD^{4-p},\mathscr{A}^K_{\alpha}] =  \sum_{p_1+p_2=2p \atop q_1+q_2=4-p, p_1+q_1>0} C_{p_1,q_1}\g_\tau^{p_1}\AngMD^{q_1}(\mathscr{A}^K_{\alpha} \mathscr{A}^L_{\beta}) \g_\tau^{p_2}\AngMD^{q_2} \g_L\upeta^{\beta}.
\end{align*}
Similarly, we have 
\be\label{E:TWO}
\partial_{\tau}^{2p}\AngMD^{4-p} (\Enth \mathscr{J}^{-1}) = - \Enth \mathscr{J}^{-2}\frac{1+f}{1-f} \partial_{\tau}^{2p}\AngMD^{4-p} \mathscr{J} + \mathcal{R}_{p2},
\ee
where the error term $\mathcal{R}_{p2}$ is explicitly given by \eqref{E:ENERGYERRORP2}.
Substituting \eqref{E:ONE} and~\eqref{E:TWO} into~\eqref{E:DIFFERENTIATING}, we obtain
\begin{align}\label{E:HIGHORDEREQUATION}
& \mathring{F} S\g_{\tau}^{2p+1}\AngMD^{4-p} v^{\mu} 
- g^{\mu\alpha}\g_K\Big(\mathring{F}^2 \Enth \mathscr{J}^{-1} \mathscr{A}^K_{\beta} \mathscr{A}^L_{\alpha} 
\g_L \partial_{\tau}^{2p} \AngMD^{4-p}\upeta^{\beta} \\
& \quad  + \mathring{F}^2 \mathscr{A}^K_{\alpha}   \Enth \mathscr{J}^{-2} \frac{1+f}{1-f}  \partial_{\tau}^{2p}\AngMD^{4-p} \mathscr{J} \Big)  \notag
= \mathcal{R}_{p3}^{\mu}, 
\end{align}
where $\mathcal{R}_{p3}^{\mu}$ is given by \eqref{E:ENERGYESTERR3}.

We now use equation~\eqref{E:HIGHORDEREQUATION} to derive a divergence 
identity that, when integrated, will yield our desired energy identity.
We will use our previously mentioned shorthand notation
$\dot{v} = \partial_{\tau}^{2p}\AngMD^{4-p} v$, etc.
We start by contracting~\eqref{E:HIGHORDEREQUATION} with $g_{\mu\nu} \dot{v}^{\nu}$:
\begin{align}\label{E:CONTRACTED}
& \mathring{F} \Enth g_{\mu\nu}\g_{\tau}\dot{v}^{\mu}\dot{v}^{\nu}
-\g_K\Big(\mathring{F}^2 \Enth \mathscr{J}^{-1} \mathscr{A}^K_{\beta} \g_L \dot{\upeta}^{\beta}\mathscr{A}^L_{\nu} 
 + \mathring{F}^2 \mathscr{A}^K_{\nu}    \Enth \mathscr{J}^{-2} \frac{1+f}{1-f}  \dot{\mathscr{J}} \Big) \dot{v}^{\nu}  
 = g_{\mu\nu}\mathcal{R}^\mu_{p3}\dot{v}^{\nu}. 
\end{align}
The first product on LHS~\eqref{E:CONTRACTED} is a pure time derivative plus a harmless error term:
\begin{align}\label{E:VELOCITYENERGY}
\mathring{F} \Enth g_{\mu\nu}\g_{\tau}\dot{v}^{\mu}\dot{v}^{\nu}
=\frac{1}{2}\g_{\tau}\left(\mathring{F} \Enth g_{\mu\nu}\dot{v}^{\mu}\dot{v}^{\nu}\right) 
- \frac{1}{2}\g_{\tau}\left(\mathring{F} \Enth \right)g_{\mu\nu}\dot{v}^{\mu}\dot{v}^{\nu}.
\end{align}
Next, we rewrite the remaining expression on LHS~\eqref{E:CONTRACTED} as follows:
\begin{align}
& -\g_K\Big(\mathring{F}^2  \Enth \mathscr{J}^{-1} \mathscr{A}^K_{\beta} \g_L \dot{\upeta}^{\beta}\mathscr{A}^L_{\nu} 
+ \mathring{F}^2 \mathscr{A}^K_{\nu}    \Enth \mathscr{J}^{-2} \frac{1+f}{1-f}  \dot{\mathscr{J}} \Big) \dot{v}^{\nu} \nonumber \\
& = \mathring{F}^2  \Enth \mathscr{J}^{-1} \mathscr{A}^K_{\beta} \g_L \dot{\upeta}^{\beta}\mathscr{A}^L_{\nu} \g_K \dot{v}^{\nu}
+ \mathring{F}^2 a^K_{\nu}    \Enth \mathscr{J}^{-3} \frac{1+f}{1-f}  \dot{\mathscr{J}} \g_K \dot{v}^{\nu} \notag\\
& \ \ \ - \g_K\left(\mathring{F}^2  \Enth \mathscr{J}^{-1} \mathscr{A}^K_{\beta}\mathscr{A}^L_{\nu}  \g_L \dot{\upeta}^{\beta}\dot{v}^{\nu}
+ \mathring{F}^2 \mathscr{A}^K_{\nu}    \Enth \mathscr{J}^{-2} \frac{1+f}{1-f}  \dot{\mathscr{J}}\dot{v}^{\nu}\right).  \label{E:HIGHORDER2}
\end{align}
We now decompose the first term on the right-hand side of~\eqref{E:HIGHORDER2}.
We will use the following identity, 
valid for any vectorfield $X$ and obtained by
decomposing $\Lagdiff X$ into its symmetric and antisymmetric parts: 
\begin{align} \label{E:SPLITGRADIENTINTOSYMMETRICANDANTISYMMETRIC}
\Lagdiff_\beta X^\alpha \Lagdiff_\alpha X^\beta = - \langle\Lagvort X,\Lagvort X\rangle_g + \langle\Lagdiff X,\Lagdiff X\rangle_g.
\end{align}
Hence, from the second identity in \eqref{E:CHAINRULES},
\eqref{E:SPLITGRADIENTINTOSYMMETRICANDANTISYMMETRIC},
and the identity $\dot{v}^{\alpha} = \partial_{\tau} \dot{\upeta}^{\alpha}$,
we deduce
\begin{align}
& \mathscr{A}^K_{\beta} \mathscr{A}^L_{\alpha} \g_K \dot{v}^{\alpha} \g_L \dot{\upeta}^{\beta}  
= \frac{1}{2} \partial_{\tau} 
	(\Lagdiff_\beta \dot{\upeta}^\alpha \ \Lagdiff_\alpha \dot{\upeta}^\beta) 
	-  \frac{1}{2}
			\g_{\tau}\left(\mathscr{A}^K_{\beta} \mathscr{A}^L_{\alpha} \right)
			\g_K \dot{\upeta}^{\alpha} \g_L \dot{\upeta}^{\beta}
	\notag \\
				& = \frac{1}{2}\g_{\tau}\left(\langle\Lagdiff \dot{\upeta},\Lagdiff\dot{\upeta}\rangle_g 
				- \langle\Lagvort\dot\upeta,\Lagvort\dot\upeta\rangle_g \right)
			-  
			\frac{1}{2}
			\g_{\tau}\left(\mathscr{A}^K_{\beta} \mathscr{A}^L_{\alpha} \right)
			\g_K \dot{\upeta}^{\alpha} \g_L \dot{\upeta}^{\beta}.	\label{E:GRADIENTENERGY}
				\end{align}
We now decompose the second term on the right-hand side of~\eqref{E:HIGHORDER2}. 
Specifically, by differentiating the identity 
$\Cof^K_{\nu}\g_K v^{\nu} = \g_{\tau} \mathscr{J}$
(see \eqref{E:PARTIALTAUJISDETERMINEDINTERMSOFDIVU}), we obtain
\[
\Cof^K_{\nu} \g_K \dot{v}^{\nu} = \g_{\tau}\dot{ \mathscr{J}} +\mathcal{R}_{p4},
\]
where $\mathcal{R}_{p4}$ is given by \eqref{E:ENERGYESTERR4}.
Therefore, we have the following identity for the second product on the right-hand side of~\eqref{E:HIGHORDER2}
(see definition \eqref{E:COFMATDEF}):
\begin{align}\label{E:DIVERGENCEENERGY}
& \mathring{F}^2 \Cof^K_{\nu}  \Enth \mathscr{J}^{-3} \frac{1+f}{1-f}  \dot{\mathscr{J}} \g_K \dot{v}^{\nu} \\
& = \frac{1}{2}\g_{\tau}\left(\mathring{F}^2 \Enth \mathscr{J}^{-3} \frac{1+f}{1-f} \dot{\mathscr{J}}^2 \right)
-\frac{1}{2}\g_{\tau}\left(\mathring{F}^2 \Enth \mathscr{J}^{-3} \frac{1+f}{1-f}\right) \dot{\mathscr{J}}^2 \notag \\
& \ \ \ 
+\mathring{F}^2 \Enth \mathscr{J}^{-3} \frac{1+f}{1-f}  \dot{\mathscr{J}} \mathcal{R}_{p4}. \notag
\end{align}	

We now substitute the above identities into equation \eqref{E:CONTRACTED}
and integrate over $[0,\tau] \times \mathfrak{M}$.
We arrive at the desired identity \eqref{E:ENERGYIDENTITY1}
but with the first four inner products 
given by $\langle \cdot \rangle_h$ instead of $\langle \cdot \rangle_g$
and with the term
$
\int_{[0,\tau] \times \mathfrak{M}} \g_K\left(\mathring{F}^2  \Enth \mathscr{J}^{-1} \mathscr{A}^K_{\beta}\mathscr{A}^L_{\alpha}  \g_L \dot{\upeta}^{\beta}\dot{v}^{\alpha}
+\mathring{F}^2 \mathscr{A}^K_{\alpha} 
\Enth \mathscr{J}^{-2} \frac{1+f}{1-f}  \dot{\mathscr{J}}\dot{v}^{\alpha}\right) \,dy d\tau' 
$
in place of 
$
\int_{\mathfrak{M}_{\tau}}\mathring{F}^2  \Enth \mathscr{J}^{-1} \left( \mathscr{A}^0_{\beta}\mathscr{A}^L_{\alpha}  \g_L \dot{\upeta}^{\beta}\dot{v}^{\alpha}
+\mathscr{A}^0_{\alpha}  \mathscr{J}^{-1} \frac{1+f}{1-f}  \dot{\mathscr{J}}\dot{v}^{\alpha}\right) \,dy\Big|^{\tau}_0 
$.
The previous two integrals are in fact equal to each other, 
in view of our assumption that $\mathring{F}=0$ on $\partial\mathfrak{M}_{\tau}$. 
Moreover, by Lemma~\ref{L:EQUIVALENCE}, we may replace 
$\langle \cdot \rangle_g$
with $\langle \cdot \rangle_h$
up to controlled error terms.
We have therefore derived \eqref{E:ENERGYIDENTITY1}.

\vspace{.1 in}
\noindent
{\bf Step 2. Proof of the error estimates \eqref{E:MAINERROR} and \eqref{E:ENERGYESTIMATEREMAININGERRORTERMS}.}
We will show that for $p=0,1,2,3,4$, we have
$
\left|
	\int_{[0,\tau] \times\mathfrak{M}} \mathcal{R}^\mu_{p1} \partial_{\tau}^{2p}\AngMD^{4-p} v_{\mu} \,dy d\tau'
\right|
\leq \mathring{M} + \delta \norm(\tau) +  \tau P(\norm(\tau)),
$
where $\mathcal{R}^\mu_{p1}$ is defined by \eqref{E:RA1MU}
and featured on the right-hand side of~\eqref{E:ENERGYESTERR3}
(and therefore on the right-hand side of~\eqref{E:ENERGYESTERR} as well).
The integrals
$
\int_{[0,\tau] \times\mathfrak{M}} \cdots \,dy d\tau'
$
of the remaining terms on the right-hand side of~\eqref{E:ENERGYESTERR}
can be bounded using similar reasoning, and we omit those details. \ \\

\noindent
{\it The end-point case $p=0$.}
The top-order terms generated by the first sum on the right-hand side of~\eqref{E:RA1MU} are of the form
\[
R_1 = \int_{[0,\tau] \times\mathfrak{M}} g_{\mu\nu}\AngMD^4\mathring{F}  \Enth v^\mu \AngMD^4 v^\nu \,dyd s, \ \ 
R_2 = \int_{[0,\tau] \times\mathfrak{M}} g_{\mu\nu}\mathring{F}  \AngMD^4 \Enth v^\mu \AngMD^4 v^\nu \,dyd s,
\]
while the top-order terms generated by the second sum on the right-hand side of~\eqref{E:RA1MU} are of the form
\[
R_3 =  \int_{[0,\tau] \times\mathfrak{M}}\g_K\left(\mathscr{A}^K_\nu \AngMD^4(\mathring{F}^2) \mathscr{J}^{-1} \Enth\right) \AngMD^4 v^\nu\,dy d \tau', \ 
R_4 = \int_{[0,\tau] \times\mathfrak{M}}\g_K\left(\mathscr{A}^K_\nu \mathring{F}^2 \AngMD^3(\mathscr{J}^{-1} \Enth)\right) \AngMD^4 v^\nu\,dy d \tau'.
\]
We first bound $R_1$. Using the fundamental theorem of calculus, 
for $0 \leq \tau' \leq \tau$
we bound 
the $\| \cdot \|_{L^{\infty}(\mathfrak{M}_{\tau'})}$ norm of
the low-order integrand factor $\Enth$ by
$\leq \mathring{M} + \tau' P(\norm(\tau'))$.
Thus, using an $L^{\infty}-L^2-L^2$ Holder estimate, we deduce
\[
\left|
	R_1
\right| 
\lesssim_{\mathring{M} + \tau P(\norm(\tau))}
 \tau \|\AngMD^4 \mathring{F}\|_{L^2(\mathfrak{M})} \sup_{0 \leq \tau' \leq \tau} \| g_{\mu \nu} v^\mu \AngMD^4 v^\nu \|_{L^2(\mathfrak{M}_{\tau'})}. 
\]
Hence, using \eqref{E:NOTTIMEINTEGRATEDCONTRACTIONAGAINSTUFIRSTSOBOLEVESTIMATE}, we conclude that
\[
\left|
	R_1
\right| 
\leq 
\mathring{M} 
+
\tau P(\norm(\tau))
\]
as desired.
We bound $R_2$ using similar reasoning together with
the simple bound 
$\|\mathring{F} \AngMD^4 \mathscr{J}\|_{L^2(\mathfrak{M}_{\tau'})}^2 \leq P(\norm(\tau'))$.

In our analysis of $R_3$, we will use the bound
\be\label{E:DENSITYHARDY}
\left\|
	\frac{\AngMD^4(\mathring{F}^2)}{\mathring{F}}
\right\|_{L^2(\mathfrak{M})} \leq \mathring{M},
\ee 
which we now prove. To proceed, we compute
$
\displaystyle
\frac{\AngMD^4(\mathring{F}^2)}{\mathring{F}} = 2\AngMD^4\mathring{F} + 4\frac{\AngMD^3\mathring{F}\AngMD\mathring{F}}{\mathring{F}} + 6 \frac{\AngMD^2\mathring{F}\AngMD^2\mathring{F}}{\mathring{F}}
$.
Hence,
\begin{align*}
\left\|
	\frac{\AngMD^4(\mathring{F}^2)}{\mathring{F}}
\right\|_{L^2(\mathfrak{M})} &\lesssim \|\mathring{F}\|_{H^4(\mathfrak{M})} 
+ 
\left\|
	\frac{\AngMD^3\mathring{F}}{\mathring{F}}
\right\|_{L^2(\mathfrak{M})}\|\AngMD\mathring{F}\|_{L^\infty(\mathfrak{M})}
+  
\left\|
	\frac{\AngMD^2\mathring{F}}{\mathring{F}}
\right\|_{L^2(\mathfrak{M})}\|\AngMD^2 \mathring{F}\|_{L^\infty(\mathfrak{M})} \\
& \lesssim \|\mathring{F}\|_{H^4(\mathfrak{M})} \leq \mathring{M},
\end{align*}
where the next-to-last inequality follows from Lemma~\ref{L:HARDY}, 
an application of the $H^2\hookrightarrow L^\infty$ embedding, 
and the initial data assumption $\mathring{F}\in H^4(\mathfrak{M})$.
We have thus proved \eqref{E:DENSITYHARDY}.

To bound $R_3$,
we first integrate by parts
to express it in the simpler form
\[
R_3 = -  \underbrace{\int_{[0,\tau] \times\mathfrak{M}}\mathscr{A}^K_\nu\mathscr{J}^{-1} \AngMD^4(\mathring{F}^2)S \AngMD^4 \g_Kv^\nu
\,dy d \tau'}_{:=R_{3,1}} + \underbrace{\int_{\mathfrak{M}_{\tau'}}\mathscr{A}^0_\nu\mathscr{J}^{-1} \AngMD^4(\mathring{F}^2)S \AngMD^4 v^\nu\,dy\Big|^{\tau'=\tau}_{\tau'=0}}_{ =: R_{3,2}}.
\]
Since $\mathscr{A}^0_\nu v^\nu = \delta^0_0 = 1$,
the same reasoning that led to the proof of \eqref{E:WEIGHTEDCONTRACTIONAGAINSTUFIRSTSOBOLEVESTIMATE}
allows us to deduce that
$\| \mathring{F} \mathscr{A}^0_\nu\AngMD^4v^\nu \|_{L^2(\mathfrak{M}_{\tau'})}
\leq 
\mathring{M} 
+ \tau P(\norm(\tau))$
for $0 \leq \tau' \leq \tau$.
Using \eqref{E:DENSITYHARDY},
again bounding the $\| \cdot \|_{L^{\infty}(\mathfrak{M}_{\tau'})}$ norm 
low-order integrand factors by
$\leq \mathring{M} + \tau P(\norm(\tau))$,
and using Cauchy-Schwarz, we conclude that
\[
|R_{3,2}|\lesssim_{\mathring{M} + \tau P(\norm(\tau))} \tau P(\norm(\tau))
\]
as desired.
To bound $R_{3,1}$, we first integrate by parts in time to obtain the identity
\[
R_{3,1} = \int_{[0,\tau] \times\mathfrak{M}} \g_\tau\left(\Enth \mathscr{A}^K_\nu\mathscr{J}^{-1}\right) \AngMD^4(\mathring{F}^2) \AngMD^4 \g_K\upeta^\nu \,dy d \tau' + 
\int_{\mathfrak{M}_{\tau'}} \Enth \mathscr{A}^K_\nu\mathscr{J}^{-1} \AngMD^4(\mathring{F}^2) \AngMD^4 \g_K\upeta^\nu \,dy d \tau' \Big|^{\tau'=\tau}_{\tau'=0} \,.
\]
We now reason as in our analysis of $R_{3,2}$
and use the estimate \eqref{E:DENSITYHARDY}
to bound 
the above spacetime integral $\int_{[0,\tau] \times\mathfrak{M}} \cdots$
by
$$
\lesssim_{\mathring{M} + \tau P(\norm(\tau))}
\int_{\tau' =0}^{\tau}
	\left\|
		\frac{\AngMD^4(\mathring{F}^2)}{\mathring{F}}
	\right\|_{L^2(\mathfrak{M})}
	\left\|
		\mathring{F} \AngMD^4 \ITIMESMD \upeta
	\right\|_{L^2(\mathfrak{M}_{\tau'})}
\, d \tau'
\lesssim_{\mathring{M} + \tau P(\norm(\tau))}
\tau P(\norm(\tau))\,.
$$
We now reason as in our analysis of $R_{3,2}$
to bound the spatial integral
$\int_{\mathfrak{M}_\tau} \cdots$
on the right-hand side of of the equation $R_{3,1} = \cdots$ above
by
$$
\displaystyle
\lesssim_{\mathring{M} + \tau P(\norm(\tau))}	
	\left\|
		\frac{\AngMD^4(\mathring{F}^2)}{\mathring{F}}
	\right\|_{L^2(\mathfrak{M})}
	\left\|
		\mathring{F} \AngMD^4 \g_K \upeta
	\right\|_{L^2(\mathfrak{M}_{\tau})}
\lesssim_{\mathring{M} + \tau P(\norm(\tau))}
\mathring{M} \norm^{1/2}(\tau) \,.
$$
Using Young's inequality, 
we bound the right-hand side of of the previous
inequality by
$\leq \mathring{M} + \delta \norm(\tau)$
as desired (we have allowed $\mathring{M}$ to depend on $\delta$). Term $R_4$ is estimated analogously to the term $R_3.$

We now analyze the integral generated by the
first below-top-order term in the second sum on the right-hand side of~\eqref{E:RA1MU},
which corresponds to the case $p=0,$ $q_1=3,q_2=0,q_3=1$.
Specifically, we want to bound
\begin{align} \label{E:BELOWTOP1}
\left|
\int_{[0, \tau] \times \mathfrak{M}}
	\g_K\left(\AngMD^3\mathscr{A}^K_\mu\mathring{F}^2 \AngMD (\mathscr{J}^{-1} \Enth)\right) \AngMD^4 v^\mu 
	\, dy d \tau' 
\right|.
\end{align}
If $K=1,2,3$, we integrate by parts with respect to $K$ and then integrate the time derivative away from 
$\AngMD^4 \g_K v^\mu = \AngMD^4 \g_K \partial_{\tau} \upeta^\mu$
to obtain the identity
\begin{align*}
&
\sum_{K=1}^3
\int_{[0,\tau] \times\mathfrak{M}}\g_K \left(\AngMD^3\mathscr{A}^K_\mu\mathring{F}^2 \AngMD (\mathscr{J}^{-1} \Enth)\right) \AngMD^4 v^\mu\, dy d\tau'
	\\ 
& =
\sum_{K=1}^3
\int_{[0,\tau] \times\mathfrak{M}}\g_\tau \left(\AngMD^3\mathscr{A}^K_\mu\mathring{F}^2 \AngMD (\mathscr{J}^{-1} \Enth)\right) \AngMD^4 \partial_K \upeta^\mu\, dyd
\tau' \\
&\ \ 
- 
\sum_{K=1}^3
\int_{\mathfrak{M}_\tau} \AngMD^3 \mathscr{A}^K_\mu\mathring{F}^2 \AngMD (\mathscr{J}^{-1} \Enth) \AngMD^4 \partial_K \upeta^\mu\,dy\Big|_{\tau'=0}^{\tau'=\tau}.
\end{align*}
Using the standard Sobolev calculus,
it is easy to bound the
the magnitude of the spacetime integral on the right-hand side above 
by 
$\leq
 \mathring{M} + \tau P(\norm(\tau))
$.
To bound the the spatial integral $\int_{\mathfrak{M}_\tau} \cdots$ above, 
we first reason as in our analysis of $R_{3,2}$ to bound it by
$
\lesssim_{\mathring{M} + \tau P(\norm(\tau))}
\| 
	\mathring{F} \AngMD^3 \mathscr{A}^K_\mu
\|_{L^2(\mathfrak{M}_{\tau})}
\| 
	\mathring{F} \AngMD^4 \partial_K \upeta^\mu
\|_{L^2(\mathfrak{M}_{\tau})}
$.
By the fundamental theorem of calculus, we have
$
\| 
	\mathring{F} \AngMD^3 \mathscr{A}^K_\mu
\|_{L^2(\mathfrak{M}_{\tau})}
\leq
 \mathring{M} + \tau P(\norm(\tau))
$.
Moreover, we have
$\| 
	\mathring{F} \AngMD^4 \partial_K \upeta^\mu
\|_{L^2(\mathfrak{M}_{\tau})}
\lesssim \norm^{1/2}(\tau)
$.
Hence, by Young's inequality,
the integral over $\mathfrak{M}_\tau$ under consideration is
$\leq \mathring M + \delta \norm(\tau) + \tau P(\norm(\tau))$ 
as desired. If $K=0$, there is no need to integrate by parts as the top-order term 
in~\eqref{E:BELOWTOP1} scales like 
\[
\int_{[0, \tau] \times \mathfrak{M}} \mathring{F}^2 \AngMD^3 \MD v \AngMD (\mathscr{J}^{-1} \Enth) \AngMD^4 v\, dy d \tau',
\]
which is easy to bound in magnitude by
$\leq \tau P(\norm(\tau))$. 
In the case $p=0,q_1=3,q_2=1,q_3=0$ 
in the second sum on the right-hand side of~\eqref{E:RA1MU},
we can obtain the desired bound
using the same reasoning together with inequality~\eqref{E:DENSITYHARDY}.
The remaining cases $q_2=3$ or $q_3=3$ are straightforward to handle via standard Sobolev embeddings; 
we refer the reader to~\cite{CoutandShkoller2012} for an analogous proof in the non-relativistic case.

Finally, we bound the integral generated by the last term on the right-hand side of~\eqref{E:RA1MU}. 
The top-order term occurs when all derivatives fall on $\partial_{\tau} \mathscr{J}$, in which case
we integrate by parts in $\AngMD$ to obtain the top-order integral
\[
\int_{[0,\tau] 
	\times \mathfrak{M}}g_{\mu\nu}v^{\mu}\mathring{F}\mathscr{J}^{-2}\Enth^{3/2}\AngMD^3 \partial_{\tau}\mathscr{J} \AngMD^5 v^\nu
\, dy d \tau'. 
\]
Next, we note that
$\AngMD^3 \partial_{\tau} \mathscr{J} 
= - \frac{1}{2} \mathscr{J}^3 \AngMD^3 \partial_{\tau} (\mathscr{J}^{-2})
$
plus products of terms involving $\leq 4$ derivatives of $\upeta$.
Hence, reasoning as in our analysis of $R_{3,2}$,
and using Young's inequality, 
we bound the magnitude of the above integral by 
$\lesssim_{\mathring{M} + \tau P(\norm(\tau))}
	\tau P(\norm(\tau))
	+
	\int_{[0,\tau]}
			\| 
				\sqrt{\mathring{F}} v_{\nu} \AngMD^5 v^\nu
			\|_{L^2(\mathfrak{M}_{\tau'})}^2
			+
			\| 
				\sqrt{\mathring{F}} \AngMD^3 \partial_{\tau} (\mathscr{J}^{-2})
			\|_{L^2(\mathfrak{M}_{\tau'})}
	\, d \tau'	
$.
To bound the time integral of 
$\| 
	\sqrt{\mathring{F}} v_{\nu} \AngMD^5 v^\nu
\|_{L^2(\mathfrak{M}_{\tau'})}^2$ by 
$\leq \tau P(\norm(\tau))$ as desired,
we use \eqref{E:NOTTIMEINTEGRATEDCONTRACTIONAGAINSTUSECONDSOBOLEVESTIMATE}.
To bound the time integral of the remaining term, we will use the following interpolation estimate:
\begin{align*}
\int_{\tau'=0}^{\tau}
		\| 
			\sqrt{\mathring{F}} \AngMD^3 \partial_{\tau} (\mathscr{J}^{-2})
		\|_{L^2(\mathfrak{M}_{\tau'})}^2
	\, d \tau'
& \lesssim
\int_{\tau'=0}^{\tau}
	\| 
		\AngMD^3 (\mathscr{J}^{-2})
	\|_{L^2(\mathfrak{M}_{\tau'})}
	\| 
		\mathring{F} \AngMD^3 \partial_{\tau}^2 (\mathscr{J}^{-2})
	\|_{L^2(\mathfrak{M}_{\tau'})}
\, d \tau'	
	\\
& \ \
	+ 
	\mathring{M}
	+
 \| 
		\AngMD^3 (\mathscr{J}^{-2})
	\|_{L^2(\mathfrak{M}_{\tau})}
	\| 
		\mathring{F} \AngMD^3 \partial_{\tau} (\mathscr{J}^{-2})
	\|_{L^2(\mathfrak{M}_{\tau})}.
\end{align*}
In view of the definition~\eqref{E:NORM} of $\norm$,
we see that the time integral on the right-hand side of of the above inequality 
is $\leq \tau P(\norm(\tau))$ as desired. 
Moreover, the below-top-order term
$
\| 
	\mathring{F} \AngMD^3 \partial_{\tau} (\mathscr{J}^{-2})
\|_{L^2(\mathfrak{M}_{\tau})}
$
is controllable via the fundamental theorem of calculus and thus by Young's inequality,
we have
$
\| 
	\AngMD^3 (\mathscr{J}^{-2})
\|_{L^2(\mathfrak{M}_{\tau})}
\| 
	\mathring{F} \AngMD^3 \partial_{\tau} (\mathscr{J}^{-2})
\|_{L^2(\mathfrak{M}_{\tau})}
\leq
\mathring{M} 
+ \delta \norm(\tau)	
+ \tau P(\norm(\tau))
$
as desired. 
We have thus bounded the integral generated by the last term on the right-hand side of~\eqref{E:RA1MU}
All the remaining terms arising due to the application of the product rule in the last term on the right-hand side of~\eqref{E:RA1MU} are lower-order. Thus, using standard energy estimates and the bound 
$\|\frac{\AngMD^q \mathring{F}}{\mathring{F}}\|_{L^2(\mathfrak{M})}
\lesssim \| \mathring{F} \|_{H^4(\mathfrak{M})}$, valid for $q=1,2,3$, 
yield that the corresponding spacetime integral is bounded in magnitude by
$\leq \tau P(\norm(\tau))$ as desired.\\

We have thus proved the desired bound
$
\left|
	\int_{[0,\tau] \times\mathfrak{M}} \mathcal{R}^\mu_{01} \partial_{\tau}^{2p}\AngMD^{4-p} v_{\mu} \,dy d\tau'
\right|
\leq \mathring{M} + \delta \norm(\tau) +  \tau P(\norm(\tau)).
$

\vspace{.1 in}
\noindent
{\it The end-point case $p=4$.} When $p=4,$ the corresponding error term $\mathcal R^\mu_{41}$ 
from \eqref{E:RA1MU} takes the form
\begin{align}
\mathcal R^\mu_{41} = & \sum_{p=1}^8 C_p\g_\tau^p(\mathring{F} S) \g_\tau^{9-p} v^\mu 
 + \sum_{p=1}^7 g^{\mu\nu}\g_K\left(\g_\tau^p\mathscr{A}^K_\nu \mathring{F}^2 \g_\tau^{8-p} (\mathscr{J}^{-1} \Enth)\right) \notag \\
& + 2\partial_{\tau}^8 \left( v^{\mu}\mathring{F}\mathscr{J}^{-2}\Enth^{3/2}\partial_{\tau}\mathscr{J}\right). \label{E:RA=41MU}
\end{align}
Since $\mathring{F}$ is $\tau$-independent, there are no top-order derivatives falling on $\mathring{F}$, 
which is different than the end-point case $p=0$.
Therefore, the spacetime integrals generated by the first and the third term on the right-hand side of~\eqref{E:RA=41MU} 
can easily be bounded by using the Cauchy-Schwarz inequality and the definition~\eqref{E:NORM} of $\norm$.
To bound the spacetime integral generated by the second term on the right-hand side of~\eqref{E:RA=41MU}, 
we isolate the most challenging top-order terms:
\begin{align*}
I_1 
& = \int_{[0, \tau] \times \mathfrak{M}} \g_K\left(\g_\tau^{7}\mathscr{A}^K_\nu \mathring{F}^2 \g_\tau(\mathscr{J}^{-1} \Enth)\right)\g_\tau^8 v^\nu \, dy d \tau', 
	\\
I_2 
& =  \int_{[0, \tau] \times \mathfrak{M}}\g_K\left(\g_\tau\mathscr{A}^K_\nu \mathring{F}^2 \g_\tau^{7} (\mathscr{J}^{-1} \Enth)\right) \g_\tau^8 v^\nu \, dy d \tau'.
\end{align*}
To bound $I_1$ and $I_2$, we separately analyze the cases $K=0$ and $K > 0$. If $K=0,$ then by the Cauchy-Schwarz inequality
and the definition~\eqref{E:NORM} of $\norm$, we obtain
\[
\big|I_1\big|+\big|I_2\big| \lesssim \tau P(\norm(\tau)).
\]
If $K=1,2,3$,
we first integrate by parts in $K$ and then again in time to move one $\g_\tau$ derivative away from $\g_K\g_\tau^8 v$,
which yields the identity
\begin{align*}
I_1 
&= 
\int_{[0,\tau] \times\mathfrak{M}} \g_\tau\left(\g_\tau^{7}\mathscr A^K_\nu \mathring{F}^2 \g_\tau(\mathscr{J}^{-1} \Enth)\right)\g_\tau^8\g_K \upeta^\nu \, dy d \tau'
- \int_{\mathfrak{M}_\tau'} 
		\g_\tau^7 \mathscr{A}^K_\nu \mathring{F}^2 \g_\tau(\mathscr{J}^{-1} \Enth) \g_\tau^8 \g_K\upeta^\nu
	\, dy
	\Big|_{\tau'=0}^{\tau'=\tau}
\end{align*}
We bound the spacetime integral using an $L^2-L^2-L^\infty$ Holder estimate and the definition of the norm $\norm(\tau)$.
To bound the spatial integral $\int_{\mathfrak{M}_\tau}$, 
we first use the fundamental theorem of calculus to obtain 
$\|\mathring F \g_\tau^7 \mathscr A^K_\nu\|\lesssim \mathring M + \tau P(\norm(\tau))$.
Also using Young's inequality and the bound $\|\mathring F \g_\tau^8\g_k\upeta^\nu\|_{L^2(\mathfrak{M}_\tau)}^2\leq C\norm(\tau)$, 
we obtain the desired estimate as follows:
\[
\Big| \int_{\mathfrak{M}_\tau} \g_\tau^7 \mathscr{A}^K_\nu \mathring{F}^2 \g_\tau(\mathscr{J}^{-1} \Enth)\g_\tau^8\g_K \upeta^\nu \, dy \leq \mathring{M} + \delta\norm(\tau)+ \tau P(\norm(\tau)).
\]
The integral $I_2$ can be treated in the same way. 
For bounds for similar error terms 
bounded in the context of non-relativistic fluids, 
we refer the reader to page 596 of~\cite{CoutandShkoller2012}.
The remaining cases $p=2,\dots,6$ on the right-hand side of~\eqref{E:RA=41MU}
are lower-order and can easily be treated with Holder's inequality and Sobolev embedding.

\vspace{.1 in}
\noindent
{\it The remaining cases $p=1,2,3$.} 
In the remaining cases $p=1,2,3$, the spacetime integrals
$
\left|
	\int_{[0,\tau]\times\mathfrak{M}}\mathcal R^\mu_{p1}\g_\tau^{2p}\AngMD^{4-p}v_\mu \,dy d \tau'
\right|
$ 
can be bounded by using arguments similar to those that we used in the 
endpoint cases $p=0,4$; we omit the details.
We have thus shown the desired bound
$
\left|
	\int_{[0,\tau] \times\mathfrak{M}} \mathcal{R}^\mu_{p1} \,dy d\tau'
\right|
\leq \mathring{M} + \delta \norm(\tau) + \tau P(\norm(\tau))
$.

It remains for us to prove \eqref{E:ENERGYESTIMATEREMAININGERRORTERMS}.
All integrals $\int_{\mathfrak{M}_0} \cdots \, dy$ on the right-hand side of~\eqref{E:ENERGYIDENTITY1}
are trivially bounded by $\leq \mathring{M}$.
Next, we use Prop.~\ref{P:VORTICITYESTIMATES} to deduce 
the following desired bound for the second integral on the right-hand side of~\eqref{E:ENERGYIDENTITY1}:
\begin{align*}
\left|
	\int_{\mathfrak{M}_{\tau}} \mathring{F}^2  \Enth \mathscr{J}^{-1} \langle\Lagvort \dot{\upeta},\Lagvort \dot{\upeta}\rangle_g \, dy
\right| 
\leq \mathring{M} + (\delta + C \varepsilon)\norm(\tau)+ \tau P(\norm(\tau)).
\end{align*}
To bound the third integral on the right-hand side of~\eqref{E:ENERGYIDENTITY1},
we use Young's inequality, an
$L^\infty-L^2-L^2$  Holder's  inequality,
and the smallness assumption~\eqref{E:SMALLDENSITY} to conclude the desired bound
(where the final constant $C$ is allowed to depend on $\delta^{-1}$)
\begin{align*}
&
\left|
	\int_{\mathfrak{M}_{\tau}} \mathring{F}^2  \Enth \mathscr{J}^{-1} \mathscr{A}^0_{\beta}\mathscr{A}^L_{\alpha}  \g_L \g_\tau^{2p}\AngMD^{4-p}{\upeta}^{\beta}\g_\tau^{2p}\AngMD^{4-p}{v}^{\alpha}
	\, dy
	\right| 
	\\
& \qquad \qquad \qquad \leq \delta \|\mathring{F} \g_\tau^{2p}\AngMD^{4-p} \ITIMESMD \upeta\|_{L^2(\mathfrak{M}_{\tau})}^2 
+ \frac C\delta \|\mathring{F}\|_{L^{\infty}(\mathfrak{M}_{\tau})} 
\|\sqrt{\mathring{F}}\g_\tau^{2p}\AngMD^{4-p} v\|_{L^2(\mathfrak{M}_{\tau})}^2 \\
&\qquad \qquad \qquad  \leq \mathring{M} + \left(\delta + C \sqrt{\varepsilon} \right) \norm(\tau) + \tau P(\norm(\tau)).
\end{align*}
Similarly, we have
\[
\left|
	\int_{\mathfrak{M}_{\tau}} \mathring{F}^2 \Enth \mathscr{J}^{-1} \mathscr{A}^0_{\alpha}  \mathscr{J}^{-1} \frac{1+f}{1-f}  \AngMD^4 {\mathscr{J}}\AngMD^4{v}^{\alpha}
	\, dy
\right| 
\leq \mathring{M} + (\delta + C \sqrt{\varepsilon}) \norm(\tau) + \tau P(\norm(\tau)).
\]
We have therefore proved \eqref{E:ENERGYESTIMATEREMAININGERRORTERMS}
and completed the proof of Prop.~\ref{P:ENERGYLEMMA}.
\end{proof}

%

\section{Bounds for the $\partial_3$ Derivatives via Elliptic Estimates}\label{S:ELLIPTIC}
We now use the previous estimates to establish our main a priori estimates
involving the higher $\partial_3$ derivatives of the solution. 
The main result is Prop.~\ref{P:HIGHERPARTIAL3DERIVATIVESVIAELLIPTIC}.
Here are the main ideas behind the analysis.
\begin{itemize}
	\item From the Hodge-type estimate \eqref{E:HODGEEST},
		the tangential trace inequality \eqref{E:SIMPLETRACE},
		and the fact that we have already established bounds for
		the horizontal and time derivatives of the solution 
		(via energy estimates),
		we see that the desired bounds follow from suitable bounds on
		the derivatives of the vorticity and the divergence.
	\item We have also already established bounds for
	the higher $\partial_3$ derivatives of the vorticity.
	Thus, the primary remaining task is to estimate the 
	higher $\partial_3$ derivatives of the
	divergence $\Flatdiv \underline \upeta$.
	As an intermediate step, we
	show that the desired bounds for $\Flatdiv \underline \upeta$
	would follow from related bounds for the Jacobian determinant $\mathscr{J}$;
	see Lemma~\ref{L:ESTIMATESFORDIV}.
	\item To control $\mathscr{J}$, we use the fact that by virtue of the evolution equations,
	the $\partial_3$ derivatives of
	$\mathscr{J}^{-2}$ can be expressed in terms of quantities that have already
	been bounded; see \eqref{E:SOLVINGFORPARTIAL3DERIVATIVE}.
	At a heuristic level, this is the last estimate needed to close
	the whole process.
	We note that the desired $L^2$ bounds for 
	$\partial_3 (\mathscr{J}^{-2})$
	involve a somewhat subtle integration by parts that relies on the
	physical vacuum condition.
	\item In reality, to close the estimates up to top order, we must use induction
	in the number of $\partial_3$ derivatives, which ensures that
	each new estimate depends only on quantities that have already been
	bounded in the induction.
\end{itemize}

In the next lemma, we provide higher-order bounds for  
$\Flatdiv \underline{\upeta}$ in terms of the higher-order 
Sobolev norms of the Jacobian determinant $\mathscr{J}$. 
We recall that $\underline{\upeta} = (\upeta^1, \upeta^2, \upeta^3)$ 
is the spatial part of the flow map.

\begin{lemma} [\textbf{Bounds for} 
$\| 
			\Flatdiv \g_{\tau}^{2p} \AngMD^q \MD^r \underline{\upeta}
		\|_{L^2(\mathfrak{M}_{\tau})}$]
 \label{L:ESTIMATESFORDIV}
	Let $p,q,r$ be non-negative integers with $p=0,1,2,3$ and $q + r = 3 - p$
	and let $\delta > 0$ be a constant.
	Under the bootstrap assumptions of Sect.~\ref{SS:BOOTSTRAPASSUMPTIONS},
	we have the following estimates for $\tau \in [0,T]$
	(where $\mathring{M}$ is allowed to depend on $\delta^{-1}$):
	\begin{align}\notag
		\| 
			\Flatdiv \g_{\tau}^{2p} \AngMD^q \MD^r \underline{\upeta}
		\|_{L^2(\mathfrak{M}_{\tau})}
		& \leq
		\mathring{M} 
			+ \delta \norm(\tau)
			+ \tau P(\norm(\tau))
				\\
		& \ \ 
		+ \|
				\g_{\tau}^{2p} \AngMD^q \MD^r \mathscr{J}
			\|_{L^2(\mathfrak{M}_{\tau})}
		+ \|
				\Flatvort \g_{\tau}^{2p} \AngMD^q \MD^r \underline{\upeta}
			\|_{L^2(\mathfrak{M}_{\tau})}
			\notag \\
		& \ \ 
		+ 
		\| 
			\g_{\tau}^{2p} \AngMD^{q+1} \MD^r \upeta
		\|_{L^2(\mathfrak{M}_{\tau})}.
			\label{E:ESTIMATESFORDIV}
	\end{align}
\end{lemma}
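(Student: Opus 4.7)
The plan is to convert bounds on $\Flatdiv F\underline{\upeta}$, where $F := \partial_\tau^{2p}\AngMD^q\MD^r$, into bounds on $F\mathscr{J}$ via an algebraic identity. Performing a Laplace expansion of $\mathscr{J}=\det(\partial_K\upeta^\alpha)$ along the column $\alpha=0$ yields
\[
\mathscr{J}=v^{0}\mathcal{D}+\mathcal{R}_{0},\qquad \mathcal{D}:=\det(\partial_{i}\upeta^{j})_{i,j=1,2,3},\qquad \mathcal{R}_{0}:=-\sum_{i=1}^{3}(-1)^{i}\partial_{i}\upeta^{0}\,N_{i},
\]
where each $N_{i}$ is a $3\times 3$ minor in the variables $v^{a}$ and $\partial_{l}\upeta^{m}$. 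Writing $\partial_{i}\upeta^{j}=\delta_{i}^{j}+B_{i}^{j}$ and using the determinant expansion $\det(I+B)=1+\mathrm{tr}(B)+E_{2}(B)+\det(B)$ with $E_{2}(B):=\tfrac{1}{2}[(\mathrm{tr}\,B)^{2}-\mathrm{tr}(B^{2})]$ and $\mathrm{tr}(B)=\Flatdiv\underline{\upeta}-3$, I arrive at the key identity
\[
v^{0}\Flatdiv\underline{\upeta}=\mathscr{J}+2v^{0}-v^{0}E_{2}(B)-v^{0}\det(B)-\mathcal{R}_{0}.
\]

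Applying $F$, using that $F$ commutes with $\Flatdiv$, and invoking the pointwise bound $v^{0}\geq 1$ from the normalization \eqref{E:NORMALIZATIONLAGRANGIAN}, the triangle inequality gives
\[
\|\Flatdiv F\underline{\upeta}\|_{L^{2}(\mathfrak{M}_{\tau})} \leq \|F\mathscr{J}\|_{L^{2}}+2\|Fv^{0}\|_{L^{2}}+\|F(v^{0}E_{2}(B))\|_{L^{2}}+\|F(v^{0}\det B)\|_{L^{2}}+\|F\mathcal{R}_{0}\|_{L^{2}}+\|[F,v^{0}]\Flatdiv\underline{\upeta}\|_{L^{2}}.
\]
The crucial smallness is that $B|_{\tau=0}=0$ and $\partial_{i}\upeta^{0}|_{\tau=0}=0$ (by \eqref{E:INITIALCHOVMATRIX}), so by the fundamental theorem of calculus in the sense of Sect.~\ref{SS:FTCESTIMATES}, $\|B\|_{L^{\infty}(\mathfrak{M}_{\tau})}+\|\partial_{i}\upeta^{0}\|_{L^{\infty}(\mathfrak{M}_{\tau})}\leq \tau P(\norm(\tau))$.

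I would then bound each remaining term on the right. The terms $\|Fv^{0}\|_{L^{2}}$ and $\|[F,v^{0}]\Flatdiv\underline{\upeta}\|_{L^{2}}$ are handled via the constraint $(v^{0})^{2}=1+|\underline{v}|^{2}$, the chain rule, Sobolev calculus, and interpolation between adjacent indices in the definition \eqref{E:NORM} of $\norm$; a Young's inequality then absorbs any unsquared top-order factor into $\delta\norm(\tau)$. The top-order part of $F((\mathrm{tr}\,B)^{2})$ equals $2(\Flatdiv\underline{\upeta}-3)\Flatdiv F\underline{\upeta}$, which by the $L^{\infty}$-smallness of $\Flatdiv\underline{\upeta}-3$ is absorbed back to the left-hand side for $\tau$ small. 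To bound $F\,\mathrm{tr}(B^{2})$ I split $B=S+A$ into symmetric and antisymmetric parts, so that $\mathrm{tr}(B^{2})=|S|^{2}-|A|^{2}$: the top-order part of $F|S|^{2}$ is bounded by $\tau P(\norm)\|F\MD\underline{\upeta}\|_{L^{2}}\leq \tau P(\norm)$, while the top-order part of $F|A|^{2}$ contributes $\lesssim \tau P(\norm)\|\Flatvort F\underline{\upeta}\|_{L^{2}}$ --- this is precisely the source of the vorticity term on the right-hand side of \eqref{E:ESTIMATESFORDIV}, since $A$ is exactly the antisymmetric spatial gradient encoded by $\Flatvort\underline{\upeta}$. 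The cubic term $F(v^{0}\det B)$ has two FTC-small factors and is handled more easily. For $F\mathcal{R}_{0}$, when $F$ lands on $N_{i}$ the prefactor $\|\partial_{i}\upeta^{0}\|_{L^{\infty}}\leq \tau P(\norm)$ supplies the required smallness; when $F$ lands on $\partial_{i}\upeta^{0}$ I invoke \eqref{E:0COMPONENTINTEMRSOFSPATIALCOMPONENT} to convert the $\upeta^{0}$-derivative into derivatives of $\underline{\upeta}$, and for $i=1,2$ this yields exactly the tangential norm $\|\partial_{\tau}^{2p}\AngMD^{q+1}\MD^{r}\upeta\|_{L^{2}}$ allowed on the right, while for $i=3$ the output is $\|\partial_{\tau}^{2p}\AngMD^{q}\MD^{r+1}\upeta\|_{L^{2}}\leq C\norm^{1/2}(\tau)$, which Young's inequality absorbs into $\delta\norm(\tau)$.

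The main obstacle is the bookkeeping through the Leibniz expansion together with the Helmholtz-type decomposition $B=S+A$: each top-order factor produced must be identified either as an absorbable $\Flatdiv F\underline{\upeta}$ with an FTC-small prefactor, as one of the three permitted right-hand side terms $\|F\mathscr{J}\|_{L^{2}}$, $\|\Flatvort F\underline{\upeta}\|_{L^{2}}$, or $\|\partial_{\tau}^{2p}\AngMD^{q+1}\MD^{r}\upeta\|_{L^{2}}$, or as a factor controlled by $\norm^{1/2}$ paired with an $O(\tau)$ coefficient. The splitting $B=S+A$ is what reveals the vorticity structure hidden inside $\mathrm{tr}(B^{2})$ and explains why the $\Flatvort$ norm must appear on the right-hand side of \eqref{E:ESTIMATESFORDIV}.
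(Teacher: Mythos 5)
Your proof takes a genuinely different route from the paper's and is correct. The paper derives the identity \eqref{E:FLATDIVIDENTITY} by differentiating $\partial_\tau\mathscr{J} = \mathscr{J}\mathscr{A}_\alpha^K\partial_K v^\alpha$ (for $p\geq 1$; its spatial analogue \eqref{E:JACOBIANDETDIFFERNTIATED} for $p=0$), then handles the resulting term $\sum_A\mathscr{A}_0^A\g_A\g_\tau^{2p}\AngMD^q\MD^r\upeta^0$ via the differentiated normalization constraint and the decomposition \eqref{E:PARTIAL3FACTORREWRITING} of $\partial_3\g_\tau^{2p}\AngMD^q\MD^r\upeta_a$ into $\Flatvort$, $\Flatdiv$, and tangential pieces. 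You instead Laplace-expand the $4\times4$ determinant $\mathscr{J}$ along the $\alpha=0$ column to peel off a factor $v^0\det(\partial_i\upeta^j)$, then polynomial-expand the spatial $3\times3$ determinant around the identity, isolating $\Flatdiv\underline{\upeta}$ directly as the linear term. Your route avoids the paper's case split between $p=0$ and $p\geq 1$, at the cost of bookkeeping the Laplace cofactors $N_i$ and the quadratic and cubic determinant corrections; each acquires the requisite smallness from $B|_{\tau=0}=0$ and $\partial_i\upeta^0|_{\tau=0}=0$.

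One inaccuracy worth correcting: your account of why $\Flatvort$ appears on the right-hand side of \eqref{E:ESTIMATESFORDIV} is off. You attribute it to the top-order part of $F|A|^2$ contributing $\tau P(\norm)\,\|\Flatvort F\underline{\upeta}\|_{L^2}$, but since $\|\Flatvort F\underline{\upeta}\|_{L^2}\lesssim\|F\MD\underline{\upeta}\|_{L^2}\lesssim\norm^{1/2}(\tau)$, that product is already $\leq\tau P(\norm(\tau))$ and the vorticity norm is not actually needed there. In your determinant-expansion route the $\Flatvort$ term could be dropped from the right-hand side altogether (keeping it is of course harmless, since it only weakens the inequality). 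The paper's proof, by contrast, genuinely needs it: in \eqref{E:FLATDIVIDENTITY} the term $\mathscr{A}_0^3\frac{v^a}{v^0}(\Flatvort\g_\tau^{2p}\AngMD^q\MD^r\underline{\upeta})_{3a}$ carries the coefficient $\mathscr{A}_0^3\frac{v^a}{v^0}$, whose initial value $-\mathring{v}^3\mathring{v}^a/(\mathring{v}^0)^2$ is generically $O(1)$ and cannot be made small by shrinking $\tau$. Your argument routes the $\partial_3$-derivative of $\upeta^0$ through \eqref{E:0COMPONENTINTEMRSOFSPATIALCOMPONENT} and Young's inequality instead, absorbing the resulting $\norm^{1/2}$ factor into the $\delta\norm(\tau)$ budget that the lemma already allows.
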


\begin{proof}
We first claim that for $p=1,2,3$ and $q + r = 3 - p$, we have the following identity:
\begin{align}
	\left(
		1 
		+ 
		\mathscr{A}_0^3 \frac{v^3}{v^0}
	\right)
	\Flatdiv \g_{\tau}^{2p} \AngMD^q \MD^r \underline{\upeta}
	& =  
		\mathscr{J}^{-1} \g_{\tau}^{2p} \AngMD^q \MD^r \mathscr{J}
		- \mathscr{A}_0^3 \sum_{a=1}^3 \frac{v^a}{v^0} (\Flatvort \g_{\tau}^{2p} \AngMD^q \MD^r \underline{\upeta})_{3a}
			\notag \\
		& \ \ 
		+ \mathscr{A}_0^3 \frac{v^3}{v^0} \sum_{a=1}^3\sum_{A=1}^2 \delta_a^{A} \partial_{A} \g_{\tau}^{2p} \AngMD^q \MD^r \upeta^a	
		- \sum_{a=1}^3\sum_{A=1}^2\mathscr{A}_0^{A} \frac{v^a}{v^0} \partial_{A} \g_{\tau}^{2p} \AngMD^q \MD^r \upeta_a
		\notag \\
	& \ \
		- \sum_{a=1}^2\frac{v^a}{v^0} \partial_a \g_{\tau}^{2p} \AngMD^q \MD^r \upeta_3 \notag \\
&	\ \ 	+ O_{L^2(\mathfrak{M}_{\tau})}(\mathring{M} + \tau P(\norm(\tau))).
		 \label{E:FLATDIVIDENTITY}
\end{align}

Similarly when $p=0$ (that is, for $q + r = 3$) we have, 
\begin{align} 
	\left(
		1 
		+ 
		\mathscr{A}_0^3 \frac{v^3}{v^0}
	\right)
	\Flatdiv \AngMD^q \MD^r \underline{\upeta}
	& =  
		\mathscr{J}^{-1} \AngMD^q \MD^r \mathscr{J}
		- \mathscr{A}_0^3 \sum_{a=1}^3\frac{v^a}{v^0} (\Flatvort \AngMD^q \MD^r \underline{\upeta})_{3a}
		\notag	\\
	& \ \
		+
		\frac{1}{v^0} \mathscr{A}_0^3 v_{\alpha} \partial_3 \AngMD^q \MD^r \upeta^{\alpha}
		- \frac{1}{v^0} \mathscr{A}_0^3 \sum_{a=1}^2 v^a \partial_a \AngMD^q \MD^r  {\upeta}_3
		\notag \\
	& \ \ 
		+ \mathscr{A}_0^3 \frac{v^3}{v^0} \sum_{a=1}^3\sum_{A=1}^2\delta_a^{A} \partial_{A} \AngMD^q \MD^r \upeta^a	
		- \sum_{a=1}^3\sum_{A=1}^2\mathscr{A}_0^{A} \frac{v^a}{v^0} \partial_{A} \AngMD^q \MD^r \upeta_a
		\notag \\
	& \ \
		+ O_{L^2(\mathfrak{M}_{\tau})}(\mathring{M} + \tau P(\norm(\tau))).
		\label{E:NOIMEDERIVATIVESFLATDIVIDENTITY}
\end{align}
We first prove \eqref{E:FLATDIVIDENTITY}.
Differentiating \eqref{E:PARTIALTAUJISDETERMINEDINTERMSOFDIVU} 
with $\g_{\tau}^{2p-1} \AngMD^q \MD^r$
using \eqref{E:DETAINVERSEDIFFERENTIATED},
and referring to Definition~\ref{D:DIFFERENTIALOPERATORS},
we obtain that
\begin{align}
\mathscr{J}^{-1} \g_{\tau}^{2p} \AngMD^q \MD^r \mathscr{J} 
& = \mathscr{A}_{\alpha}^K \partial_K \g_{\tau}^{2p} \AngMD^q \MD^r \upeta^{\alpha}
	+ O_{L^2(\mathfrak{M}_{\tau})}(\mathring{M} + \tau P(\norm(\tau)))
	\notag \\
& = 
	\Flatdiv \g_{\tau}^{2p} \AngMD^q \MD^r \underline{\upeta}
	+
	\sum_{A=1}^3\mathscr{A}_0^{A} \partial_{A} \g_{\tau}^{2p} \AngMD^q \MD^r \upeta^0
	+\sum_{A=1}^3
	(\mathscr{A}_a^{A} 
	- \delta_a^{A}
	)
	\partial_{A}
	\g_{\tau}^{2p} \AngMD^q \MD^r \upeta^a
	\notag \\
& \ \ \
	+ \mathscr{A}_{\alpha}^0 \g_{\tau}^{2p+1} \AngMD^q \MD^r \upeta^{\alpha}
	+ O_{L^2(\mathfrak{M}_{\tau})}(\mathring{M} + \tau P(\norm(\tau)))
	\notag
		\\
& = 
	\Flatdiv \g_{\tau}^{2p} \AngMD^q \MD^r \underline{\upeta}
	+
	\sum_{A=1}^3\mathscr{A}_0^{A} \partial_{A} \g_{\tau}^{2p} \AngMD^q \MD^r \upeta^0
	\notag \\
	& \ \ \
	+ O_{L^2(\mathfrak{M}_{\tau})}(\mathring{M} + \tau P(\norm(\tau))).
	 \label{E:HIGHERJACDETDERIVATIVES}
\end{align}
Differentiating \eqref{E:NORMALIZATIONLAGRANGIAN} with $\g_{\tau}^{2p-1} \AngMD^q \MD^r$,
we rewrite the term $\sum_{A=1}^3\mathscr{A}_0^{A} \partial_{A} \g_{\tau}^{2p} \AngMD^q \MD^r \upeta^0$ from
the first line of the right-hand side of \eqref{E:HIGHERJACDETDERIVATIVES}
as follows:
\begin{align}
	\sum_{A=1}^3\mathscr{A}_0^{A} \partial_{A} \g_{\tau}^{2p} \AngMD^q \MD^r \upeta^0
	& =
	\sum_{A,a=1}^3\mathscr{A}_0^{A} \frac{v^a}{v^0} \partial_{A} \g_{\tau}^{2p} \AngMD^q \MD^r \upeta_a
		+  O_{L^2(\mathfrak{M}_{\tau})}(\mathring{M} + \tau P(\norm(\tau))).
\notag 		\\
	& 
	= 
	\sum_{a=1}^3\mathscr{A}_0^3 \frac{v^a}{v^0} \partial_3 \g_{\tau}^{2p} \AngMD^q \MD^r \upeta_a
	+
	\sum_{A=1}^2\sum_{a=1}^3\mathscr{A}_0^{A} \frac{v^a}{v^0} \partial_{A} \g_{\tau}^{2p} \AngMD^q \MD^r \upeta_a \notag \\
	& \ \ \ 
	+ O_{L^2(\mathfrak{M}_{\tau})}(\mathring{M} + \tau P(\norm(\tau))) \,.
	 \label{E:PARTIAL3TERMRFIRSTREWRITING}
\end{align}
Again referring to Definition~\ref{D:DIFFERENTIALOPERATORS}, 
we rewrite the factor 
$\frac{v^a}{v^0} \partial_3 \g_{\tau}^{2p} \AngMD^q \MD^r \upeta_a$ from the first product on the
right-hand side of \eqref{E:PARTIAL3TERMRFIRSTREWRITING} as follows:
\begin{align} 
\sum_{a=1}^3\frac{v^a}{v^0} \partial_3 \g_{\tau}^{2p} \AngMD^q \MD^r \upeta_a
& = \sum_{a=1}^3\frac{v^a}{v^0} (\Flatvort \g_{\tau}^{2p} \AngMD^q \MD^r \underline{\upeta})_{3a}
+\sum_{a=1}^2\frac{v^a}{v^0} \partial_a \g_{\tau}^{2p} \AngMD^q \MD^r \upeta_3+ \frac{v^3}{v^0} \partial_3 \g_{\tau}^{2p} \AngMD^q \MD^r \upeta_3
\notag	\\
& =\sum_{a=1}^3 \frac{v^a}{v^0} (\Flatvort \g_{\tau}^{2p} \AngMD^q \MD^r \underline{\upeta})_{3a}
+ \frac{v^3}{v^0} \Flatdiv \g_{\tau}^{2p} \AngMD^q \MD^r \underline{\upeta} \notag \\
& \ \ 
- \sum_{a=1}^3\sum_{A=1}^2\frac{v^3}{v^0} \delta_a^{A} \partial_{A} \g_{\tau}^{2p} \AngMD^q \MD^r \upeta^a+\sum_{a=1}^2\frac{v^a}{v^0} \partial_a \g_{\tau}^{2p} \AngMD^q \MD^r \upeta_3.
\label{E:PARTIAL3FACTORREWRITING}
\end{align}

Combining 
\eqref{E:HIGHERJACDETDERIVATIVES},
\eqref{E:PARTIAL3TERMRFIRSTREWRITING},
and \eqref{E:PARTIAL3FACTORREWRITING}
and carrying out simple algebraic computations, we
conclude \eqref{E:FLATDIVIDENTITY} in the case that $p=1,2,3$.

The proof of \eqref{E:NOIMEDERIVATIVESFLATDIVIDENTITY} is similar. 
It is based on the general Jacobian differentiation identity \eqref{E:JACOBIANDETDIFFERNTIATED}
as opposed to the special case of \eqref{E:PARTIALTAUJISDETERMINEDINTERMSOFDIVU}.
Moreover, in the analog of the step \eqref{E:PARTIAL3TERMRFIRSTREWRITING},
we no longer differentiate \eqref{E:NORMALIZATIONLAGRANGIAN}.
Instead, we use the algebraic decomposition
\begin{align} \label{E:ALGEBRAICDECOMPOFETA0THREEDERIVATIVE}
\mathscr{A}_0^3 \partial_3 \AngMD^q \MD^r \upeta^0
& = - \frac{1}{v^0}\mathscr{A}_0^3 v_{\alpha} \partial_3 \AngMD^q \MD^r \upeta^{\alpha}	
		+ \frac{1}{v^0} \mathscr{A}_0^3 v^a \partial_3 \AngMD^q \MD^r \underline{\upeta}_a.
\end{align}
We then algebraically express the last product on the right-hand side of \eqref{E:ALGEBRAICDECOMPOFETA0THREEDERIVATIVE}
in terms of $\Flatvort \AngMD^q \MD^r \underline{\upeta}$, 
$\Flatdiv \AngMD^q \MD^r \underline{\upeta}$, etc.,
like in~\eqref{E:PARTIAL3TERMRFIRSTREWRITING}.
The remaining calculations are similar to the remaining ones in the previous case;
we omit those details.

To prove~\eqref{E:ESTIMATESFORDIV}, we first observe that 
by \eqref{E:INITIALINVERSECHOVMATRIX},
we have that
$$\left[1 + \mathscr{A}_0^3 \frac{v^3}{v^0}\right]|_{\tau=0} 
= \frac{1 + (\mathring{v}^1)^2 + (\mathring{v}^2)^2}{1 + (\mathring{v}^1)^1 + (\mathring{v}^2)^2 + (\mathring{v}^3)^2}\,,$$
which is bounded from above and uniformly from below away from $0$ (depending on the data).

We first study the cases $p=1,2,3$.  We compute the norm $\| \cdot \|_{L^2(\mathfrak{M}_{\tau})}$
	of equation \eqref{E:FLATDIVIDENTITY}. 
	Up to $O_{L^2(\mathfrak{M}_{\tau})}(\mathring{M} + \tau P(\norm(\tau)))$ errors, 
	we may use the fundamental theorem of calculus
	to replace all undifferentiated quantities
	in equation \eqref{E:FLATDIVIDENTITY},
	such as the factor
	$
		1 
		+ 
		\mathscr{A}_0^3 \frac{v^3}{v^0}
	$
	on the left-hand side of \eqref{E:FLATDIVIDENTITY}, 
	with their initial values (at $\tau =0$).
	The desired estimate \eqref{E:ESTIMATESFORDIV} follows easily from these observations.
	
	In the remaining case $p=0$, we
compute the norm $\| \cdot \|_{L^2(\mathfrak{M}_{\tau})}$
	of equation \eqref{E:NOIMEDERIVATIVESFLATDIVIDENTITY}
	and argue similarly, this time
	using the estimate \eqref{E:CONTRACTIONAGAINSTUFIRSTSOBOLEVESTIMATE}
	to bound the term
	$\frac{1}{v^0} \mathscr{A}_0^3 v_{\alpha} \partial_3 \AngMD^q \MD^r \upeta^{\alpha}$
	on the second line of the right-hand side of \eqref{E:NOIMEDERIVATIVESFLATDIVIDENTITY}.
	\end{proof}

In the next lemma, we bound the terms corresponding to the tangential boundary
term in the elliptic estimates, that is, the corresponding to the last term on the right-hand side of
\eqref{E:HODGEEST}.

\begin{lemma}[\textbf{Estimates for the tangential trace term}]
\label{L:SIMPLETRACE}
Let $p,q,r$ be non-negative integers with
$p = 0,1,2,3$ and $q + r = 3-p$.
Under the bootstrap assumptions of Sect.~\ref{SS:BOOTSTRAPASSUMPTIONS},
we have the following estimates for $\tau \in [0,T]$:
\begin{align} \label{E:SIMPLETRACE}
	\sum_{a=1}^2
	\| 
		\AngMD \partial_{\tau}^{2p} \AngMD^q \MD^r \upeta^a
	\|_{H^{-0.5}(\partial \mathfrak{M}_{\tau})}
	& \leq
		\mathring{M}
		+
		\tau P(\norm(\tau))
			\\
		& \ \ 
		+
		\| 
			\mathring{F} \partial_{\tau}^{2p} \AngMD^{q+1} \MD^r \upeta 
		\|_{L^2(\mathfrak{M}_{\tau})}	
		+
		\| 
			\mathring{F} \partial_{\tau}^{2p} \AngMD^{q+1} \MD^{r+1} \upeta 
		\|_{L^2(\mathfrak{M}_{\tau})}	.
		\notag
\end{align}

\end{lemma}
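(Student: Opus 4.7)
The plan is to apply the tangential trace inequality~\eqref{E:TANGENTIALTRACE} to the $\mathbb{R}^3$-valued vectorfield
\[
Y := \partial_\tau^{2p}\AngMD^q\MD^r\underline{\upeta}
\]
defined along $\mathfrak M_\tau$. Lemma~\ref{L:TANGENTIALTRACE} immediately yields
\[
\sum_{a=1}^2\|\AngMD\partial_\tau^{2p}\AngMD^q\MD^r\upeta^a\|_{H^{-0.5}(\partial\mathfrak M_\tau)}
\lesssim \|\Flatvort Y\|_{L^2(\mathfrak M_\tau)} + \|\AngMD Y\|_{L^2(\mathfrak M_\tau)},
\]
so it suffices to bound these two bulk terms.

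For the first term, because $\Flatvort$ is a constant-coefficient antisymmetric spatial operator, it commutes exactly with the constant-coefficient derivatives $\AngMD^q\MD^r$. Since $q+r=3-p$, this gives
\[
\|\Flatvort Y\|_{L^2(\mathfrak M_\tau)}
\leq \|\Flatvort\partial_\tau^{2p}\underline{\upeta}\|_{H^{3-p}(\mathfrak M_\tau)},
\]
which is bounded by $\mathring M + \tau P(\norm(\tau))$ by the flat vorticity estimate~\eqref{E:FLATMAINVORTICITYESTIMATES} of Prop.~\ref{P:VORTICITYESTIMATES}.

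For the second term, $\|\AngMD Y\|_{L^2(\mathfrak M_\tau)} = \|\partial_\tau^{2p}\AngMD^{q+1}\MD^r\underline{\upeta}\|_{L^2(\mathfrak M_\tau)}$, is a genuinely top-order quantity. The refined bound in the statement, involving $\mathring F$-weighted norms, is obtained by combining the physical vacuum condition~\eqref{E:VACUUMBOUNDARY} (which gives $\mathring F\sim d(\cdot,\partial\mathfrak M)$) with the weighted Sobolev embedding of Lemma~\ref{L:WEIGHTEDEMBEDDING} applied with $k=2$: for any sufficiently regular $f$,
\[
\|f\|_{L^2(\mathfrak M_\tau)}\lesssim \|d f\|_{L^2(\mathfrak M_\tau)} + \|d \MD f\|_{L^2(\mathfrak M_\tau)}\lesssim_{\mathring M}\|\mathring F f\|_{L^2(\mathfrak M_\tau)} + \|\mathring F\MD f\|_{L^2(\mathfrak M_\tau)}.
\]
Taking $f = \partial_\tau^{2p}\AngMD^{q+1}\MD^r\underline{\upeta}$, the first resulting term is precisely $\|\mathring F\partial_\tau^{2p}\AngMD^{q+1}\MD^r\upeta\|_{L^2(\mathfrak M_\tau)}$. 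Expanding $\MD f$ produces the weighted term $\|\mathring F\partial_\tau^{2p}\AngMD^{q+1}\MD^{r+1}\upeta\|_{L^2(\mathfrak M_\tau)}$ (from the vertical component) together with horizontal contributions of the form $\|\mathring F \partial_\tau^{2p}\AngMD^{q+2}\MD^r\upeta\|_{L^2(\mathfrak M_\tau)}$; the latter are treated by writing $\mathring F\AngMD g = \AngMD(\mathring F g) - (\AngMD\mathring F)g$ and using $\AngMD\mathring F\in L^\infty(\mathfrak M)$ together with the fundamental theorem of calculus (as in Sect.~\ref{SS:FTCESTIMATES}) to absorb them into $\mathring M + \tau P(\norm(\tau))$.

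The main obstacle is the extraction of the $\mathring F$ weights on the top-order piece: a straight application of the tangential trace inequality alone yields the unweighted quantity $\|\partial_\tau^{2p}\AngMD^{q+1}\MD^r\underline{\upeta}\|_{L^2(\mathfrak M_\tau)}$, which is not small enough to be absorbed in the subsequent elliptic and energy analysis. It is the degeneracy $\mathring F\sim d$ provided by the physical vacuum condition, together with the weighted Hardy/Sobolev embedding of Lemma~\ref{L:WEIGHTEDEMBEDDING}, that allows one to trade an unweighted top-order $L^2$ bound for two $\mathring F$-weighted top-order $L^2$ bounds (plus controllable remainders), producing the exact form of the estimate~\eqref{E:SIMPLETRACE} needed for the Hodge-type estimate in Lemma~\ref{L:HODGEEST}.
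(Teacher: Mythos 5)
Your argument is correct and essentially identical to the paper's: apply the tangential trace inequality of Lemma~\ref{L:TANGENTIALTRACE} to $Y = \partial_\tau^{2p}\AngMD^q\MD^r\underline{\upeta}$, control the $\Flatvort Y$ contribution via the flat vorticity estimate~\eqref{E:FLATMAINVORTICITYESTIMATES}, and trade the unweighted top-order $\|\AngMD Y\|_{L^2}$ for $\mathring F$-weighted quantities using the degenerate Sobolev embedding of Lemma~\ref{L:WEIGHTEDEMBEDDING}. Two minor remarks: you correctly invoke the $k=2$ version of the weighted embedding (the paper's text says $k=1$, but the inequality it actually displays is the $k=2$ inequality, so this is merely a typo in the paper), and your separate treatment of the horizontal pieces $\mathring F\,\partial_\tau^{2p}\AngMD^{q+2}\MD^r\upeta$ by factoring $\mathring F$ through $\AngMD$ is redundant, since those derivative combinations are already subsumed in the term $\|\mathring F\,\partial_\tau^{2p}\AngMD^{q+1}\MD^{r+1}\upeta\|_{L^2(\mathfrak M_\tau)}$ because one of the $\MD$ factors may be horizontal.
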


\begin{proof}
	By Lemma~\ref{L:TANGENTIALTRACE}, we have that 
	\begin{align} \label{E:TANGENTIALTRACEFIRSTAPPLICATIONELLIPTIC}
		\mbox{LHS \eqref{E:SIMPLETRACE}}
		& \lesssim_{\mathring{M}}
			\| 
				\Flatvort \partial_{\tau}^{2p} \AngMD^{q+1} \MD^r \upeta 
			\|_{H^1(\mathfrak{M}_\tau)'}
			+
			\| 
				\partial_{\tau}^{2p} \AngMD^{q+1} \MD^r \upeta 
			\|_{L^2(\mathfrak{M}_\tau)}.
	\end{align}
	Because we may integrate by parts in $\AngMD$ against test functions without incurring boundary terms, 
	it follows that
	the first term on the right-hand side of \eqref{E:TANGENTIALTRACEFIRSTAPPLICATIONELLIPTIC}
	is $\lesssim \| 
				\Flatvort \partial_{\tau}^{2p} \AngMD^q \MD^r \upeta 
			\|_{L^2(\mathfrak{M}_\tau)}
			\leq
			$
			right-hand side of \eqref{E:SIMPLETRACE} as desired,
			where in the last step, we have used \eqref{E:FLATMAINVORTICITYESTIMATES}.

	To bound the last term on the right-hand side of \eqref{E:TANGENTIALTRACEFIRSTAPPLICATIONELLIPTIC}
	by $\lesssim$ right-hand side of \eqref{E:SIMPLETRACE},
	we use the weighted embedding result \eqref{E:WEIGHTEDEMBEDDING} with $k=1$:
	\begin{align} \label{E:WEIGHTEDEMBEDDINGAPPLIEDINTRACECONTEXT}
		\| 
			\partial_{\tau}^{2p} \AngMD^{q+1} \MD^r \upeta 
		\|_{L^2(\mathfrak{M}_\tau)}
		& \lesssim_{\mathring{M}}
		\| 
			\mathring{F} \partial_{\tau}^{2p} \AngMD^{q+1} \MD^r \upeta 
		\|_{L^2(\mathfrak{M}_\tau)}
		+
		\| 
			\mathring{F} \partial_{\tau}^{2p} \AngMD^{q+1} \MD^{r+1} \upeta 
		\|_{L^2(\mathfrak{M}_\tau)}.
	\end{align}
	We have thus proved \eqref{E:SIMPLETRACE}.
	\end{proof}

\subsection{The main estimates}
\label{SS:ELLITPICMAINEST}

We now use the previous estimates to establish our main a priori estimates
involving the higher $\partial_3$ derivatives of the solution.

\begin{proposition}\label{P:ELLIPTIC}
	Let $\delta > 0$ be a constant and let $\varepsilon$ be as in \eqref{E:SMALLDENSITY}.
	Under the bootstrap assumptions of Sect.~\ref{SS:BOOTSTRAPASSUMPTIONS},
	we have the following estimates for $\tau \in [0,T]$
	(where $\mathring{M}$ is allowed to depend on $\delta^{-1}$):
\label{P:HIGHERPARTIAL3DERIVATIVESVIAELLIPTIC}
\begin{align} \label{E:MAINTRANSVERSALDERIVATIVESESTIMATE}
& \sum_{p=0}^3 
		\| 
			\g_{\tau}^{2p} \upeta 
		\|_{H^{4-p}(\mathfrak{M}_{\tau})}^2 
+ \sum_{p=0}^3 
		\|
			\g_{\tau}^{2p} (\mathscr{J}^{-2})
		\|_{H^{3-p}(\mathfrak{M}_{\tau})}^2 
+ \sum_{p=0}^3 
		\|
			\mathring{F} \g_{\tau}^{2p} (\mathscr{J}^{-2})
		\|_{H^{4-p}(\mathfrak{M}_{\tau})}^2 
\\
&  \leq
	\mathring{M} 
	+ (\delta + C \varepsilon) \norm(\tau)
	+ \tau P(\norm(\tau)). 
\notag
\end{align}
\end{proposition}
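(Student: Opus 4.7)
\medskip

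\noindent \textbf{Proof plan.} The strategy is to bound the quantities on LHS \eqref{E:MAINTRANSVERSALDERIVATIVESESTIMATE} by a nested induction on the number of vertical derivatives $\partial_3$, where at each level of the induction we combine the Hodge estimate of Lemma~\ref{L:HODGEEST} with an algebraic extraction of $\partial_3 (\mathscr{J}^{-2})$ from the formulation \eqref{E:LAGRANGIANENTHALPYEVOLUTIONNEEDEDFORELLIPTIC}. Concretely, let $p+q+r\leq 3$ with $r$ counting $\partial_3$ derivatives. The cases $r=0$ are already covered by Prop.~\ref{P:ENERGYLEMMA} (energy estimates for horizontal and time derivatives) in combination with Lemma~\ref{L:CONTRACTIONAGAINSTUFIRSTSOBOLEVESTIMATE} for the $\upeta^0$ component. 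For $r\geq 1$, we split $\upeta=(\upeta^0,\underline{\upeta})$; using \eqref{E:0COMPONENTINTEMRSOFSPATIALCOMPONENT}, it suffices to control $\underline{\upeta}$ to which the Hodge estimate \eqref{E:HODGEEST} applies, producing three ingredients: the $\Flatvort$ term (controlled by Prop.~\ref{P:VORTICITYESTIMATES}), the tangential-trace term (controlled by Lemma~\ref{L:SIMPLETRACE}, which in turn relies on previously established horizontal estimates), and the $\Flatdiv$ term, which by Lemma~\ref{L:ESTIMATESFORDIV} reduces to bounding $\|\g_\tau^{2p}\AngMD^q\MD^r\mathscr{J}\|_{L^2(\mathfrak{M}_\tau)}$.

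The heart of the argument is therefore to bound the derivatives of $\mathscr{J}^{-2}$ carrying at least one $\partial_3$. For this we rewrite \eqref{E:LAGRANGIANENTHALPYEVOLUTIONNEEDEDFORELLIPTIC} as
\begin{equation*}
\mathring{F}\, \Cof^{3}_{\mu}\, \partial_3(\mathscr{J}^{-2})
= \mathcal{G}_{\mu},
\end{equation*}
where $\mathcal{G}_\mu$ collects the right-hand side of \eqref{E:LAGRANGIANENTHALPYEVOLUTIONNEEDEDFORELLIPTIC} plus the term $2\Cof^3_\mu(\partial_3\mathring F)\mathscr{J}^{-2}$ transferred from the left. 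Contracting against $g^{\mu\nu}\Cof^{3}_{\nu}$ and invoking the strict positivity \eqref{E:BOOTSTRAPCOF3ONEFORMPOSITIVITY} of $g^{\mu\nu}a^3_\mu a^3_\nu$, we algebraically solve for $\mathring{F}\,\partial_3(\mathscr{J}^{-2})$, which expresses the highest vertical derivative of $\mathscr{J}^{-2}$ in terms of $\partial_\tau^2\upeta$, purely tangential derivatives of $\upeta$ and $\mathscr{J}^{-2}$, and lower-order products. Each of these has either already been bounded (by the energy and vorticity estimates) or lies strictly lower in the induction. Higher vertical derivatives $\mathring F\partial_3^r(\mathscr{J}^{-2})$ and their mixed $\g_\tau^{2p}\AngMD^q$ counterparts are obtained by applying $\g_\tau^{2p}\AngMD^q\MD^{r-1}$ to this identity, then shuffling commutators to previously-controlled quantities; the weighted embedding Lemma~\ref{L:WEIGHTEDEMBEDDING} will be used to pass between the weighted $\mathring F$-norms and the unweighted $H^{3-p}$ norms appearing in the second sum of the proposition.

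The main obstacle is the top-order estimate for $\|\mathring F\g_\tau^{2p}(\mathscr{J}^{-2})\|_{H^{4-p}}$ in the regime where a $\partial_3$ derivative of maximum order hits $\mathring F$: since $\mathring F\in H^4(\mathfrak M)$ only, a naive pointwise estimate would cost one derivative and produce uncontrollable boundary contributions. The remedy, as already flagged in the outline of Sect.~1.5, is an integration by parts in $\partial_3$ that exploits the physical vacuum condition $\partial_3\mathring F\sim\text{const}<0$ near $\{y^3=0\}$: schematically, for a test factor $\phi$ one writes
\begin{equation*}
\int_{\mathfrak M_\tau} (\partial_3\mathring F)\,\phi^2\,dy
= -2\int_{\mathfrak M_\tau} \mathring F\,\phi\,\partial_3\phi\,dy + \text{boundary},
\end{equation*}
with vanishing boundary contributions by virtue of $\mathring F|_{\partial\mathfrak M}=0$, thereby turning a would-be loss of regularity into a coercive Hardy-type term controllable by $\mathring M$ plus $\delta\norm(\tau)+\tau P(\norm(\tau))$ after Young and Lemma~\ref{L:HARDY}. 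This is the step that genuinely requires the physical vacuum rate of degeneracy.

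Once these three building blocks are assembled, we collect the resulting bounds for $p=0,1,2,3$ and $r=1,\dots,4-p$, feed them back through Lemma~\ref{L:ESTIMATESFORDIV} and Lemma~\ref{L:SIMPLETRACE} into the Hodge estimate \eqref{E:HODGEEST}, and absorb the $(\delta + C\varepsilon)\norm(\tau)$ contributions arising from Prop.~\ref{P:VORTICITYESTIMATES} and the smallness assumption \eqref{E:SMALLDENSITY}. Summing over $p$ and over the number of vertical derivatives yields the stated inequality \eqref{E:MAINTRANSVERSALDERIVATIVESESTIMATE}, and combined with Prop.~\ref{P:ENERGYLEMMA} and Prop.~\ref{P:VORTICITYESTIMATES} completes the polynomial estimate \eqref{E:POLYNOMIAL} that drives the proof of Theorem~\ref{T:MAIN}.
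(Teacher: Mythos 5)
Your overall architecture matches the paper: reduce via Hodge (Lemma~\ref{L:HODGEEST}), Lemma~\ref{L:ESTIMATESFORDIV}, Lemma~\ref{L:SIMPLETRACE} and Prop.~\ref{P:VORTICITYESTIMATES} to control of the $\partial_3$-derivatives of $\mathscr{J}^{-2}$, and induct over the configuration of derivatives. The issue is in the crucial step where you bound those derivatives of $\mathscr{J}^{-2}$: as described, your argument is circular.

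You propose to move the term $2\Cof^3_\mu(\partial_3\mathring F)\partial_\tau^{2p}\AngMD^q\MD^r(\mathscr{J}^{-2})$ from the left of the differentiated version of \eqref{E:LAGRANGIANENTHALPYEVOLUTIONNEEDEDFORELLIPTIC} into the source $\mathcal{G}_\mu$, then contract with $g^{\mu\nu}\Cof^3_\nu$ and read off $\mathring F\,\partial_3\partial_\tau^{2p}\AngMD^q\MD^r(\mathscr{J}^{-2})$. But $(\partial_3\mathring F)$ is $O(1)$ by the physical vacuum condition, so the transferred term is top order. The resulting bound has the form
\[
\|\mathring F\,\partial_3\,\partial_\tau^{2p}\AngMD^q\MD^r(\mathscr{J}^{-2})\|_{L^2(\mathfrak{M}_\tau)}
\lesssim \|\partial_\tau^{2p}\AngMD^q\MD^r(\mathscr{J}^{-2})\|_{L^2(\mathfrak{M}_\tau)} + \cdots,
\]
and the unweighted quantity on the right is itself part of what \eqref{E:MAINTRANSVERSALDERIVATIVESESTIMATE} asks you to prove (it lives in the middle sum). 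Your plan to then recover the unweighted norm from the weighted one via Lemma~\ref{L:WEIGHTEDEMBEDDING} just sends you around the same loop, without any smallness to absorb.

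The paper closes this loop differently. It keeps both $\mathring F\,\Cof^3_\mu\,\partial_3\phi$ and $2\Cof^3_\mu(\partial_3\mathring F)\phi$ (with $\phi=\partial_\tau^{2p}\AngMD^q\MD^r(\mathscr{J}^{-2})$) on the left as in \eqref{E:SOLVINGFORPARTIAL3DERIVATIVE}, then computes the $g$-contraction $\mathfrak{I}^{p;q;r}_\alpha\mathfrak{I}^{p;q;r\,\alpha}$ of the right-hand side, which equals $(g^{\mu\nu}\Cof^3_\mu\Cof^3_\nu)\bigl[\mathring F^2(\g_3\phi)^2 + 4\mathring F(\g_3\mathring F)\phi\,\g_3\phi + 4(\g_3\mathring F)^2\phi^2\bigr]$. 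The cross term is a perfect $\g_3$-derivative up to lower order; after integrating over $\mathfrak{M}_\tau$ and integrating by parts (with no boundary term since $\mathring F$ vanishes at $\partial\mathfrak M$), it combines with the $4(\g_3\mathring F)^2\phi^2$ piece to leave a \emph{positive} quadratic form $2(\g_3\mathring F)^2\phi^2 + \mathring F^2(\g_3\phi)^2$ modulo an absorbable $\mathring F\,\g_3^2\mathring F\,\phi^2$ contribution; this is exactly identity \eqref{E:UNUSUALIBPID}. That simultaneous coercivity in $\|\phi\|_{L^2}^2$ \emph{and} $\|\mathring F\,\g_3\phi\|_{L^2}^2$ is what breaks the circularity. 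Your schematic integration by parts $\int(\partial_3\mathring F)\phi^2 = -2\int\mathring F\,\phi\,\partial_3\phi$ is formally related, but you attribute it to controlling a putative loss of regularity caused by $\mathring F\in H^4(\mathfrak{M})$ — a separate issue that the paper handles with the Hardy inequality, Lemma~\ref{L:HARDY}, as in \eqref{E:DENSITYHARDY}. Without a clear account of where the coercive $\|\phi\|_{L^2}^2$ term comes from, the central estimate of the proposition does not close.
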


\begin{proof}
{\bf Step 1. A high-order identity involving $\mathscr{J}^{-2}$.}
Let $p,q,r$ be non-negative integers with $p \in \lbrace 0,1,2,3 \rbrace$ and $q + r = 3-p$.  
Applying the operator $\g_\tau^{2p}\AngMD^q\MD^r$ to~\eqref{E:LAGRANGIANENTHALPYEVOLUTIONNEEDEDFORELLIPTIC} we arrive at the following higher-order version of it (for $\mu = 0,1,2,3$):	
\begin{align} \label{E:SOLVINGFORPARTIAL3DERIVATIVE}
		\mathring{F} \Cof^3_{\mu} \g_3 \partial_{\tau}^{2p} \AngMD^q \MD^r (\mathscr{J}^{-2})
		+
		2 \Cof^3_{\mu} (\g_3 \mathring{F}) \partial_{\tau}^{2p} \AngMD^q \MD^r (\mathscr{J}^{-2})
		 & = \mathfrak{I}_{\mu}^{p;q;r},
	\end{align}
where the error term $\mathfrak{I}_{\mu}^{p;q;r}$ is given by 
\begin{align}
		 \mathfrak{I}_{\mu}^{p;q;r} 
		 & = 
			- 
			\Enth^{-1/2} \partial_{\tau}^{2p+2} \AngMD^q \MD^r \upeta_{\mu}
			+ 2 \mathscr{J}^{-2}(\partial_{\tau} \upeta_{\mu}) (\mathring{F}  \partial_{\tau}^{2p+1} \AngMD^q \MD^r \mathscr{J}) 
				\label{E:INHOMSOLVINGFORPARTIAL3DERIVATIVE} \\
				& \ \
			-
			\left\lbrace
				\mathring{F} \g_3(\mathscr{J}^{-2}) 
 				+
 				2 (\g_3 \mathring{F}) \mathscr{J}^{-2} 
 			\right\rbrace
			(\partial_{\tau}^{2p} \AngMD^q \MD^r a^3_{\mu})
			\notag \\
		& 
			\ \
			- \left[\g_{\tau} \left(\mathscr{J}^{-2}\right)\right] (\mathring{F} \partial_{\tau}^{2p} \AngMD^q \MD^r a^0_{\mu}) 
			- \sum_{A=1}^2 
				\left[\g_{A} \left(\mathscr{J}^{-2}\right)\right] (\mathring{F} \partial_{\tau}^{2p} \AngMD^q \MD^r a^{A}_{\mu}) 
				\notag \\
		& \ \
			- \sum_{A=1}^2 
				\Cof^{A}_{\mu} \left[\mathring{F} \g_{A} \partial_{\tau}^{2p} \AngMD^q \MD^r \left(\mathscr{J}^{-2}\right)\right]
				\notag \\
 		& \ \
 			- 2 \sum_{A=1}^2 \frac{\partial_{A} \mathring{F}}{\mathring{F}} \mathscr{J}^{-2}  
 				(\mathring{F} \partial_{\tau}^{2p} \AngMD^q \MD^r \Cof_{\mu}^{A})
 		- 2 \sum_{A=1}^2 
 				\frac{\partial_{A} \mathring{F}}{\mathring{F}} 
 				\Cof_{\mu}^{A}
 				(\mathring{F} \partial_{\tau}^{2p} \AngMD^q \MD^r \mathscr{J}^{-2}).
 					\notag \\
		& \ \ +  O_{L^2(\mathfrak{M}_{\tau})}(\mathring{M} + \tau P(\norm(\tau))). \notag
 	\end{align}
The term $O_{L^2(\mathfrak{M}_{\tau})}(\mathring{M} + \tau P(\norm(\tau)))$ on the right-hand side of~\eqref{E:INHOMSOLVINGFORPARTIAL3DERIVATIVE} accounts for
all of the lower-order terms arising from the application of the Leibniz rule,
which can be bounded via the fundamental theorem of calculus,
as we explained in Sect.~\ref{SS:FTCESTIMATES}.
We emphasize that terms containing high-order spatial derivatives of $\mathring{F}$ can be estimated using Lemma~\ref{L:HARDY},
much like in our proof of~\eqref{E:DENSITYHARDY}.
Note that due to the smallness assumption~\eqref{E:SMALLDENSITY}, 
the norm $\| \cdot \|_{L^2(\mathfrak{M}_{\tau})}$ 
of all the terms appearing in the third, fourth, and the fifth line 
of~\eqref{E:INHOMSOLVINGFORPARTIAL3DERIVATIVE} can be bounded by $C\varepsilon \norm(\tau),$ where we keep in mind that the cofactor matrix $\Cof^K_\mu$ scales like $\ITIMESMD \upeta$ from
the point of view of derivative count. 
Furthermore, we note that the cofactor matrix entries $\Cof^3_\mu$ are polynomials in the components of
$\ITIMESMD \upeta$ with a special structure: 
only {\em purely tangential} derivatives $\g_\tau,\AngMD$ of $\upeta$ are present in the products.
Therefore, we have
\begin{align}
\|\mathfrak{I}_{\mu}^{p;q;r} \|_{L^2(\mathfrak{M}_{\tau})} \lesssim &\mathring{M} + (\delta+\varepsilon)\norm(\tau)+ \tau P(\norm(\tau))  \notag \\
& + \|\g_\tau^{2p+2}\AngMD^q\MD^r \upeta_\mu\|_{L^2(\mathfrak{M}_{\tau})} + \|\mathring{F}\g_\tau^{2p+1}\AngMD^q\MD^r \mathscr{J}\|_{L^2(\mathfrak{M}_{\tau})} 
+ \|\g_\tau^{2p}\AngMD^{q+1}\MD^r \upeta\|_{L^2(\mathfrak{M}_{\tau})} . \label{E:IPQRESTIMATE}
\end{align}
 
\medskip 
 
\noindent
{\bf Step 2. Integration-by-parts estimate.}

Using \eqref{E:SOLVINGFORPARTIAL3DERIVATIVE}, we compute 
\begin{align} \label{E:UNUSUALIBPID}
 \frac{1}{g^{\mu \nu} \Cof^3_{\mu} a^3_{\nu}}
 \mathfrak{I}^{p;q;r}_{\alpha} \mathfrak{I}^{p;q;r \ \alpha}
 & = 2 (\g_3\mathring{F})^2 \left(\partial_{\tau}^{2p} \AngMD^q \MD^r \left(\mathscr{J}^{-2}\right)\right)^2
 + \mathring{F}^2\left(\partial_{\tau}^{2p} \AngMD^q \MD^r \g_3\left(\mathscr{J}^{-2}\right)\right)^2 
 	\\
& \ \
	- 2 (\g_3^2 \mathring{F}) \mathring{F} \left(\partial_{\tau}^{2p} \AngMD^q \MD^r (\mathscr{J}^{-2})\right)^2
	+ 2 \g_3 
			\left\lbrace
				\mathring{F} (\g_3 \mathring{F}) \left(\partial_{\tau}^{2p} \AngMD^q \MD^r (\mathscr{J}^{-2})\right)^2
			\right\rbrace.
	\notag
\end{align}
We now integrate \eqref{E:UNUSUALIBPID} over $\mathfrak{M}_{\tau}$.
The integrals of the first two terms on the right-hand side of \eqref{E:UNUSUALIBPID}
sum to
$\geq  
2 (\min_{\mathfrak{M}} |\g_3 \mathring{F}|^2)
\| \partial_{\tau}^{2p} \AngMD^q \MD^r \left(\mathscr{J}^{-2}\right)\|_{L^2(\mathfrak{M}_{\tau})}^2
+ \|\mathring{F} \partial_{\tau}^{2p} \AngMD^q \MD^r \g_3 \left(\mathscr{J}^{-2}\right)\|_{L^2(\mathfrak{M}_{\tau})}^2
$,
where $\min_{\mathfrak{M}} |\g_3 \mathring{F}| > 0$ in view of the physical boundary condition.
Next, using 
$\| \g_3^2 \mathring{F} \|_{L^{\infty}(\mathfrak{M})} 
\lesssim
\| \mathring{F} \|_{H^4(\mathfrak{M})} 
\leq \mathring{M}$,
Cauchy-Schwarz,
and Young's inequality, we deduce that for any number $\theta > 0$, we have
\begin{align*}
\int_{\mathfrak{M}_\tau}(\g_3^2 \mathring{F}) \mathring{F} \left(\partial_{\tau}^{2p} \AngMD^q \MD^r (\mathscr{J}^{-2})\right)^2
&\le \theta \|\partial_{\tau}^{2p} \AngMD^q \MD^r (\mathscr{J}^{-2})\|_{\LM}^2 + C_\theta\|\mathring{F} \partial_{\tau}^{2p} \AngMD^q \MD^r (\mathscr{J}^{-2})\|_{\LM}^2 \\
& \le \theta \|\partial_{\tau}^{2p} \AngMD^q \MD^r (\mathscr{J}^{-2})\|_{\LM}^2 + \mathring{M} + \tau P(\norm(\tau)),
\end{align*}
where $\mathring{M}$ above is allowed to depend on $\theta^{-1}$ and
we used the fundamental theorem of calculus to bound $\|\mathring{F} \partial_{\tau}^{2p} \AngMD^q \MD^r (\mathscr{J}^{-2})\|_{\LM}^2$.
Finally, we note that by \eqref{E:BOOTSTRAPCOF3ONEFORMPOSITIVITY}, the factor
$
\displaystyle 
\frac{1}{g^{\mu \nu} a^3_{\mu} a^3_{\nu}}
$
on LHS \eqref{E:UNUSUALIBPID} is $\leq \mathring{M}$ in $\| \cdot \|_{L^{\infty}(\mathfrak{M}_{\tau})}$.
Combining the above estimates and choosing 
$\theta = \min_{\mathfrak{M}} |\g_3 \mathring{F}|^2 < 2 \min_{\mathfrak{M}} |\g_3 \mathring{F}|^2$,
we conclude that 
\begin{align} 
\|\mathring{F} \partial_{\tau}^{2p} \AngMD^q \MD^{r+1} \left(\mathscr{J}^{-2}\right)\|_{L^2(\mathfrak{M}_{\tau})}^2
& +
\| \partial_{\tau}^{2p} \AngMD^q \MD^r \left(\mathscr{J}^{-2}\right)\|_{L^2(\mathfrak{M}_{\tau})}^2 
 	\notag \\
& \lesssim \mathring{M} + \tau P(\norm(\tau)) +\int_{\mathfrak{M}_{\tau}} \left|\mathfrak{I}^{p;q;r}_{\alpha} \mathfrak{I}^{p;q;r\,\alpha}\right|\, dy.
	\label{E:PRELIMINARYESTIMATESOLVINGFORPARTIAL3DERIVATIVE}
\end{align}
Using~\eqref{E:IPQRESTIMATE} we infer that 
\begin{align} 
& \|\mathring{F} \partial_{\tau}^{2p} \AngMD^q \MD^{r+1} \left(\mathscr{J}^{-2}\right)\|_{L^2(\mathfrak{M}_{\tau})}^2
+ \| \partial_{\tau}^{2p} \AngMD^q \MD^r \left(\mathscr{J}^{-2}\right)\|_{L^2(\mathfrak{M}_{\tau})}^2  
 \lesssim \mathring{M} + (\delta+\varepsilon)\norm(\tau) + \tau P(\norm(\tau)) \notag \\
& \ \ \  \ + \|\g_\tau^{2p+2}\AngMD^q\MD^r \upeta_\mu\|_{L^2(\mathfrak{M}_{\tau})} + \|\mathring{F}\g_\tau^{2p+1}\AngMD^q\MD^r \mathscr{J}\|_{L^2(\mathfrak{M}_{\tau})} 
+ \|\g_\tau^{2p}\AngMD^{q+1}\MD^r \upeta\|_{L^2(\mathfrak{M}_{\tau})} . 
\label{E:MAINELLIPTIC}
\end{align}

\medskip

\noindent
{\bf Step 3. Case $(p,q,r)=(3,0,0)$.}
Let $p=3$ and $(q,r)=0$. We want to estimate the terms appearing on the right-hand side of~\eqref{E:MAINELLIPTIC}. 
First, using Lemma~\ref{L:WEIGHTEDEMBEDDING} and Definition~\ref{D:ENERGY}, 
we obtain the following estimate:
\[
\|\g_\tau^8\upeta_\mu\|_{L^2(\mathfrak{M}_{\tau})} 
\lesssim \|\mathring{F}\g_\tau^8 \upeta_\mu\|_{L^2(\mathfrak{M}_{\tau})} 
+  
\|\mathring{F} \MD  \g_\tau^8 \upeta_\mu\|_{L^2(\mathfrak{M}_{\tau})} 
\lesssim \mathcal{E}.
\]
From similar reasoning, we deduce
$\|\g_\tau^{6}\AngMD \upeta\|_{L^2(\mathfrak{M}_{\tau})} \lesssim \mathcal{E}$.
Finally, the term $\|\g_\tau^{7} \mathscr{J}\|_{L^2(\mathfrak{M}_{\tau})}$ can be bounded by the fundamental theorem of calculus.
Therefore, from~\eqref{E:MAINELLIPTIC}, Prop.~\ref{P:ENERGYLEMMA}, 
and the above estimates, we deduce
\begin{align}\label{E:PIS3APPLICATIONKEYIBPPVCESTIMATE}
\|\mathring{F} \partial_{\tau}^{6} \left(\mathscr{J}^{-2}\right)\|_{L^2(\mathfrak{M}_{\tau})}^2
+ \| \partial_{\tau}^{6} \left(\mathscr{J}^{-2}\right)\|_{L^2(\mathfrak{M}_{\tau})}^2 \leq 
\mathring{M} + (\delta + C \varepsilon)\norm(\tau) + \tau P(\norm(\tau)).
\end{align}
To bound $\|\g_\tau^6\upeta\|_{H^1(\mathfrak{M}_{\tau})}^2$ 
we first split it into its spatial and $0$ component:
\begin{align} \label{E:PARTIAL6TAUUPETASPATIALTIMESPLITTING}
	\| \g_{\tau}^6 \upeta \|_{H^1(\mathfrak{M}_{\tau})}^2
	& \lesssim
	\| \g_{\tau}^6 \underline{\upeta} \|_{H^1(\mathfrak{M}_{\tau})}^2
	+
	\| \g_{\tau}^6 \upeta^0 \|_{H^1(\mathfrak{M}_{\tau})}^2.
\end{align}
From \eqref{E:0COMPONENTINTEMRSOFSPATIALCOMPONENT}, we see that
$\| \g_{\tau}^6 \upeta^0 \|_{H^1(\mathfrak{M}_{\tau})}^2
\lesssim 
\| \g_{\tau}^6 \underline{\upeta} \|_{H^1(\mathfrak{M}_{\tau})}^2
+
\mathring{M} 
	+ \tau P(\norm(\tau))
$.
It therefore suffices to bound
$\| \g_{\tau}^6 \underline{\upeta} \|_{H^1(\mathfrak{M}_{\tau})}^2$.
To this end, we first use Lemma~\ref{L:HODGEEST} with $s=1$ to obtain
\begin{align} 
	\| \partial_{\tau}^6 \underline{\upeta} \|_{H^1(\mathfrak{M}_\tau)} 
	& 
	\lesssim_{\mathring{M}} 
	\| \partial_{\tau}^6 \underline{\upeta} \|_{L^2(\mathfrak{M}_\tau)} 
	+ 
	\|\Flatdiv \partial_{\tau}^6 \underline{\upeta} \|_{L^2(\mathfrak{M}_\tau)} 
	+ 
	\| \Flatvort \partial_{\tau}^6 \underline{\upeta} \|_{L^2(\mathfrak{M}_\tau)} 
		\notag \\
	& \ \ 
	+ 
	\sum_{a=1}^2 \|\AngMD \partial_{\tau}^6 \upeta^a \|_{H^{-1/2}(\partial \mathfrak{M}_{\tau})}.
	\label{E:HODGEESTRECALLED}
\end{align}
We now claim that
\begin{align}	
 \mbox{right-hand side of~\eqref{E:HODGEESTRECALLED}} 
 \leq  \mathring{M} + (\delta + C\varepsilon)\norm(\tau)+ \tau P(\norm(\tau)). \label{E:SIXTIMEDERIVATIVESONUPETAHODGEEST}
\end{align}
To prove \eqref{E:SIXTIMEDERIVATIVESONUPETAHODGEEST}, we
first use Prop.~\ref{P:VORTICITYESTIMATES} to deduce
$\| \Flatvort \partial_{\tau}^6 \underline{\upeta} \|_{L^2(\mathfrak{M}_\tau)} \leq \mathring{M} + \tau P(\norm(\tau))$.
We then use Lemma~\ref{L:ESTIMATESFORDIV} with $(p,q,r)=(3,0,0)$,
Prop.~\ref{P:VORTICITYESTIMATES},
the bound $\|\g_\tau^{6}\AngMD \upeta\|_{L^2(\mathfrak{M}_{\tau})} \lesssim \mathcal{E}$ proved above,
and the energy estimates \eqref{E:ENERGYBOUND}
to deduce
$\|\Flatdiv \partial_{\tau}^6 \underline{\upeta} \|_{L^2(\mathfrak{M}_\tau)}
\leq \mathring{M} + \tau P(\norm(\tau))
$.
Finally, we use Lemma~\ref{L:SIMPLETRACE} 
and the energy estimates \eqref{E:ENERGYBOUND}
to deduce
$\sum_{a=1}^2 \|\AngMD \partial_{\tau}^6 \upeta^a \|_{H^{-1/2}(\partial \mathfrak{M}_{\tau})}
\lesssim \mathring{M} + (\delta+\varepsilon)\norm(\tau)+ \tau P(\norm(\tau))
$.
We have thus shown \eqref{E:SIXTIMEDERIVATIVESONUPETAHODGEEST}
and completed the proof of 
\eqref{E:MAINTRANSVERSALDERIVATIVESESTIMATE} in the case $(p,q,r)=(3,0,0)$.

\medskip 

\noindent
{\bf Step 4. Case $(p,q,r)=(2,1,0)$ and $(p,q,r)=(2,0,1)$.}
These cases rely heavily on the already established bound~\eqref{E:SIXTIMEDERIVATIVESONUPETAHODGEEST}.
Most steps in the proof are the same, so we only explain the handful of
important changes. The first important point is
that we must treat the case $(p,q,r)=(2,1,0)$ before the case $(2,0,1)$.
Using arguments identical to the ones described in {\bf Step 3} and using~\eqref{E:SIXTIMEDERIVATIVESONUPETAHODGEEST}, 
we can obtain the following desired estimates corresponding to the case $(2,1,0)$: 
\begin{align}\label{E:210}
\| \partial_{\tau}^4\AngMD \underline{\upeta} \|_{H^1(\mathfrak{M}_\tau)}  + 
			\|\g_{\tau}^{4}\AngMD \mathscr{J}\|_{L^2(\mathfrak{M}_{\tau})}^2 
		+\|\mathring{F} \g_{\tau}^{4}\AngMD \mathscr{J}\|_{H^{1}(\mathfrak{M}_{\tau})}^2 
		\leq \mathring{M} + (\delta + C\varepsilon)\norm(\tau)+ \tau P(\norm(\tau)).
\end{align}

We now treat the case $(2,0,1)$. Just like in {\bf Step 3}, 
we want to bound the terms on the right-hand side of~\eqref{E:MAINELLIPTIC}.
Specifically, the terms of interest are
\[
\|\g_\tau^{6}\MD \upeta_\mu\|_{L^2(\mathfrak{M}_{\tau})}, \ \   \|\mathring{F}\g_\tau^{5}\MD \mathscr{J}\|_{L^2(\mathfrak{M}_{\tau})}, \ \  
\|\g_\tau^{4}\AngMD\MD \upeta\|_{L^2(\mathfrak{M}_{\tau})}.
\] 
Note that 
$
\|\g_\tau^{6}\MD \upeta_\mu\|_{L^2(\mathfrak{M}_{\tau})}
$
is
bounded via~\eqref{E:HODGEESTRECALLED}-\eqref{E:SIXTIMEDERIVATIVESONUPETAHODGEEST}, 
that $\|\mathring{F}\g_\tau^{5}\MD \mathscr{J}\|_{L^2(\mathfrak{M}_{\tau})}$
can be bounded via the fundamental theorem of calculus,
while 
$
\|\g_\tau^{4}\AngMD\MD \upeta\|_{L^2(\mathfrak{M}_{\tau})}
$ 
can be bounded via~\eqref{E:210}. 
We now run the same argument as in {\bf Step 3} but this time also relying on the already established bounds~\eqref{E:SIXTIMEDERIVATIVESONUPETAHODGEEST}
and~\eqref{E:210}, thereby obtaining the following analog of \eqref{E:210}:
\begin{align}\label{E:201}
\| \partial_{\tau}^4\MD\underline{\upeta} \|_{H^1(\mathfrak{M}_\tau)}  + 
			\|\g_{\tau}^{4}\MD\mathscr{J}\|_{L^2(\mathfrak{M}_{\tau})}^2 
		+\|\mathring{F} \g_{\tau}^{4}\MD \mathscr{J}\|_{H^{1}(\mathfrak{M}_{\tau})}^2 
		\leq \mathring{M} + (\delta + C \varepsilon)\norm(\tau)+ \tau P(\norm(\tau)).
\end{align}
We have therefore proved~\eqref{E:MAINTRANSVERSALDERIVATIVESESTIMATE} in the case $(p,q,r)=(2,0,1)$.

\medskip

{\bf Step 5. The remaining cases.}

Using essentially the same arguments,
we may continue the induction scheme and establish the desired estimates
by considering the remaining cases in the following order:
$(1,2,0)$, $(1,1,1)$, $(1,0,2)$, $(0,3,0)$, $(0,2,1)$, $(0,1,2)$, 
and $(0,0,3)$. This completes our proof of the desired estimate 
\eqref{E:MAINTRANSVERSALDERIVATIVESESTIMATE}.
\end{proof}

In the next corollary,
we show that
$\Lagvort v$
enjoys a bit of extra regularity compared
to the regularity we have already derive.

\begin{corollary}\label{C:LAST}
Let $\delta > 0$ be a constant and let $\varepsilon$ be as in \eqref{E:SMALLDENSITY}.
Under the bootstrap assumptions of Sect.~\ref{SS:BOOTSTRAPASSUMPTIONS}, 
we have the following estimates for $\tau \in [0,T]$:
	\begin{align}
	 \|\Lagvort v(\tau)\|_{H^3(\mathfrak M_{\tau})}^2   + \|\mathring{F}\AngMD^4\Lagvort v(\tau)\|_{L^2(\mathfrak M_{\tau})}^2
	  \leq
	\mathring{M} 
	+ (\delta+ C \varepsilon) \norm(\tau)
	+ \tau P(\norm(\tau)). 
	\end{align}
\end{corollary}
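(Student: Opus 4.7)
The proof rests on the algebraic identity
\begin{equation*}
\Lagvort v_{\mu\nu} = \Enth^{-1}\Lagvort(\Enth v)_{\mu\nu} - \bigl(\g_\tau v_\nu v_\mu - \g_\tau v_\mu v_\nu\bigr),
\end{equation*}
which I would derive by expanding $\Lagvort(\Enth v)_{\mu\nu} = \Enth\Lagvort v_{\mu\nu} + (\Lagdiff_\mu \Enth)v_\nu - (\Lagdiff_\nu\Enth)v_\mu$ and substituting $\Lagdiff_\mu \Enth = -\g_\tau(\Enth v_\mu)$ from the enthalpy equation \eqref{E:LAGRANGIANENTHALPYEVOLUTION}; the $\g_\tau\Enth$ pieces cancel by antisymmetry, leaving only the enthalpy times $(\g_\tau v\wedge v)$ term. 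The plan is then to bound the two resulting summands separately in both the $H^3$ norm and the $\mathring F$-weighted $L^2$ norm, using only estimates already established.

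For the first summand $\Enth^{-1}\Lagvort(\Enth v)$: by \eqref{E:VORTICITY1} we have $\Lagvort\g_\tau(\Enth v)=0$, hence the commutator identity gives $\g_\tau \Lagvort(\Enth v) = [\g_\tau,\Lagvort](\Enth v)$, a below-top-order expression of schematic form $(\g_\tau\mathscr{A})\MD(\Enth v)$. Integrating in time and applying $\MD^q$ for $q\leq 3$ (respectively $\mathring F\AngMD^4$), and invoking the fundamental theorem of calculus as in Section~\ref{SS:FTCESTIMATES}, I would reduce these contributions to quantities already controlled, yielding $\mathring M + \tau P(\norm)$ contributions for the $H^3$ part. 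For the weighted part, exactly the same integration-by-parts-in-time arguments used in \eqref{hss1} produce the additional $(\delta+C\varepsilon)\norm$ term. The scalar $\Enth^{-1}=(1-f)^2$ is harmless since $\Enth$ is uniformly bounded above and below under the bootstrap \eqref{E:BOOTSTRAPNUMBERDENSITY}.

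For the second summand $(\g_\tau v\wedge v)$ in $H^3$: the Sobolev product estimate $\|fg\|_{H^3}\lesssim \|f\|_{H^3}\|g\|_{L^\infty}+\|f\|_{L^\infty}\|g\|_{H^3}$ (valid since $H^3\hookrightarrow L^\infty$) together with the bootstrap bounds on $\|v\|_{L^\infty},\|\g_\tau v\|_{L^\infty},\|v\|_{H^3}$ reduces the task to bounding $\|\g_\tau v\|_{H^3}^2=\|\g_\tau^2\upeta\|_{H^3}^2$, which is exactly the $p=1$ contribution to the first sum in Proposition~\ref{P:HIGHERPARTIAL3DERIVATIVESVIAELLIPTIC} and therefore controlled by $\mathring M+(\delta+C\varepsilon)\norm+\tau P$. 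This settles the $H^3$ part of the corollary.

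The hardest step is the $\mathring F$-weighted $L^2$ estimate of $(\g_\tau v\wedge v)$, where the top-order piece is $\mathring F(W_\nu v_\mu-W_\mu v_\nu)$ with $W:=\AngMD^4\g_\tau v$. I would exploit the Euclidean algebraic identity
\begin{equation*}
\sum_{\mu,\nu}(W_\nu v_\mu - W_\mu v_\nu)^2 = 2|W|^2|v|^2 - 2(W\cdot v)^2,
\end{equation*}
using on one hand $\|\mathring F\|_{L^\infty}\leq\varepsilon$ paired with the bound $\|\mathring F W\|_{L^2}^2=\|\mathring F\g_\tau^2\AngMD^4\upeta\|_{L^2}^2\leq\norm$ (obtained from the $p=1$, $\MD=\AngMD$ case of the weighted summand $\|\mathring F\g_\tau^{2p}\AngMD^{4-p}\MD\upeta\|_{L^2}^2$ in \eqref{E:NORM}), and on the other hand the observation that $W\cdot v$ is effectively lower order via repeated differentiation of the normalization $\langle v,v\rangle_g=-1$ (which forces $v_\alpha \AngMD^4\g_\tau v^\alpha$ to equal a sum of strictly lower-order products). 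The main obstacle is precisely this extraction of the smallness factor $(\delta+C\varepsilon)$ in the top-order weighted bound: it requires carefully chaining the smallness of the vacuum weight $\mathring F$, the algebraic wedge identity above, and the precise structure of the norm \eqref{E:NORM}. All remaining cross terms arising from Leibniz expansion are below top order and bounded by the fundamental theorem of calculus as in Section~\ref{SS:FTCESTIMATES}.
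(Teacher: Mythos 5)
Your algebraic decomposition
\[
(\Lagvort v)_{\mu\nu} = \Enth^{-1}\bigl(\Lagvort(\Enth v)\bigr)_{\mu\nu} - \bigl(\g_\tau v_\nu v_\mu - \g_\tau v_\mu v_\nu\bigr)
\]
is correct, and it is a genuinely different starting point from the paper's: the paper instead takes the identity \eqref{E:ETAVORTICITY} with $A=0$, which packages the same cancellation inside a time-integrated formula so that the integration by parts in time is already built in. Your route can in principle be made to work, the $H^3$ bound on the wedge piece is fine (though note that reducing to $\|\g_\tau^2\upeta\|_{H^3}^2$ via Prop.~\ref{P:HIGHERPARTIAL3DERIVATIVESVIAELLIPTIC} injects a $(\delta+C\varepsilon)\norm$ contribution there, and the same is true for the first summand in $H^3$ since the top-order boundary terms after integration by parts involve $\|\upeta\|_{H^4}$, so it is not simply $\mathring M + \tau P(\norm)$), but the step you yourself flag as "the hardest" contains a real gap.

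\medskip

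The problem is the weighted $L^2$ estimate of $\mathring F(W\wedge v)$ with $W := \AngMD^4\g_\tau v = \g_\tau^2\AngMD^4\upeta$. Your plan is inconsistent on three counts. First, the bound you cite, $\|\mathring F W\|_{L^2(\mathfrak M_\tau)}^2 \leq \norm(\tau)$, gives at best $\|\mathring F(W\wedge v)\|_{L^2}^2 \lesssim \|v\|_{L^\infty}^2\norm(\tau)$ with a constant that is not small — this cannot be absorbed. Second, the way you propose to manufacture the $\varepsilon$ factor — splitting $\mathring F^2 = \mathring F\cdot\mathring F$ and using $\|\mathring F\|_{L^\infty}\leq\varepsilon$ — leaves you needing $\|\sqrt{\mathring F}\,W\|_{L^2}^2 = \|\sqrt{\mathring F}\,\g_\tau^2\AngMD^4\upeta\|_{L^2}^2$; but the only $\sqrt{\mathring F}$-weighted terms in the norm \eqref{E:NORM} are $\|\sqrt{\mathring F}\,\g_\tau^{2p+1}\AngMD^{4-p}\upeta\|_{L^2}^2$, which carry an \emph{odd} number of $\g_\tau$'s, whereas $W$ carries two; the quantity you need is simply not controlled by $\norm$ (this is exactly why the analogous step in Lemma~\ref{L:VORTICITYLEMMA} works for $\Lagvort\g_\tau v$, which produces $\g_\tau^{2p+3}$ and hence matches the odd parity, but does not carry over to $\Lagvort v$). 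Third, the observation that "$W\cdot v$ is lower order by differentiating $\langle v,v\rangle_g=-1$" applies to the Lorentzian contraction $v_\alpha W^\alpha$, not to the Euclidean dot product $\sum_\mu W_\mu v_\mu$ that appears in your wedge identity; these differ by $2W^0 v^0$, which is genuinely top order. So neither half of the wedge identity delivers what you want.

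\medskip

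The correct way to close this, consistent with the stated bound of the corollary, is much simpler than your wedge-identity scheme: one does not need $\|\sqrt{\mathring F}W\|$ or the normalization at all. The quantity $\|\mathring F\g_\tau^2\AngMD^4\upeta\|_{L^2}^2$ is controlled, up to $\mathring M$-constants, by the energy $\mathcal E_1$ (the term $\int\mathring F^2\mathscr J^{-1}(1-f)^{-2}\langle\Lagdiff\g_\tau^2\AngMD^3\upeta,\Lagdiff\g_\tau^2\AngMD^3\upeta\rangle_h\,dy$ contains $\|\mathring F\,\AngMD\g_\tau^2\AngMD^3\upeta\|_{L^2}^2$), and Prop.~\ref{P:ENERGYLEMMA} gives $\mathcal E(\tau)\leq C\norm(0) + (\delta+C\sqrt\varepsilon)\norm(\tau)+\tau P(\norm(\tau))$. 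An $L^\infty$--$L^2$ H\"older bound on $\mathring F(W\wedge v)$ then already produces the desired right-hand side, with the smallness factor coming from the energy estimate rather than from $\|\mathring F\|_{L^\infty}$. This is also, in effect, what the paper's integration-by-parts-in-time boundary terms reduce to. In short: your decomposition is a valid alternative to \eqref{E:ETAVORTICITY}, but the smallness in the hardest step must come from citing the energy estimate (or Prop.~\ref{P:HIGHERPARTIAL3DERIVATIVESVIAELLIPTIC}), not from the Euclidean wedge identity and the constraint, which as written do not supply it.
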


\begin{proof}
Using formula~\eqref{E:ETAVORTICITY} (with the index $A=0$) and considering only the derivative count, 
rather than the precise structure of the identity, we find that
\begin{align}
&D^3\left(\Lagvort v(\tau)\right) \notag \\
& \qquad = \int_0^\tau F \left(D^3\ITIMESMD v\ITIMESMD v + D^3\ITIMESMD S \ITIMESMD v + \ITIMESMD S D^3\ITIMESMD v + D^3\partial_\tau^2 v v\right)\,d\tau' + O_{L^2(\mathfrak M_\tau)}(\mathring M + \tau P(\mathscr S(\tau))),  \label{E:D3}
\end{align}
where $F$ is some function bounded in $L^\infty(\mathfrak M_\tau)$ by the virtue of the fundamental theorem of calculus.
The first term on the right-hand side of \eqref{E:D3} can be bounded by integrating by parts in time: 
\[
\int_0^\tau F D^3\ITIMESMD v\ITIMESMD  v \,d\tau' = F D^3\ITIMESMD\upeta\ITIMESMD v\big|^{\tau}_0 - \int_0^\tau gD^3\ITIMESMD \upeta\ITIMESMD\partial_\tau v \,d\tau' + O_{L^2(\mathfrak M_\tau)}(\mathring M + \tau P(\mathscr S(\tau))).
\]
We now use the Young inequality, the already established bound~\eqref{E:MAINTRANSVERSALDERIVATIVESESTIMATE}, and Sobolev embedding to conclude that 
\[
\Big|\int_0^\tau f D^3\ITIMESMD v \ITIMESMD v \,d\tau'\Big| \leq \mathring{M} 
	+ (\delta+ C \varepsilon) \norm(\tau)
	+ \tau P(\norm(\tau)). 
\]
The remaining error terms are bounded in a similar fashion.

The estimate for $\|\mathring{F}\AngMD^4\Lagvort \, v(\tau) \|_{L^2(\mathfrak M_{\tau})}^2$  
follows from the same argument.
\end{proof}

\section{Proof of the Main Theorem}
\label{S:PROOFOFMAINTHEOREM}

We now use the previous estimates to establish Theorem~\ref{T:MAIN}.
Under the bootstrap assumptions of Sect.~\ref{SS:BOOTSTRAPASSUMPTIONS}, we combine Props.~\ref{P:VORTICITYESTIMATES},~\ref{P:ENERGYLEMMA},~\ref{P:ELLIPTIC}, and Cor.~\ref{C:LAST},
and choose $\varepsilon,\delta$
to be sufficiently small, thereby arriving at the following inequality
(where $\mathring{M} \lesssim_{\delta^{-1}} \norm(0)$):
\[
\norm(\tau) \leq \mathring{M} + \tau P(\norm(\tau)).
\]
By a standard continuity argument, 
it follows that there exists a time $T>0$ such that 
\[
\norm(\tau) \leq 2 \mathring{M}, \ \ \tau \in [0,T].
\]
Using Lemma~\ref{L:IMPROVMENTOFBOOTSTRAP}
and shrinking the size of $T$ if necessary, 
we obtain strict improvements of the bootstrap assumptions.
We have thus proved the theorem.

\section{The equation of state $p(\rho) = \rho^\gamma,$ $\gamma>1$}
\label{S:OTHEREQUATIONSOFSTATE}

In this section, we briefly explain how to adapt our functional framework to the 
equation of state $p(\rho) = \rho^\gamma$ with $\gamma > 1$.
In analogy with~\cite{CoutandShkoller2012}, we define $p_0$ to be the smallest integer satisfying 
\[
1+\frac 1{\gamma-1} - p_0 \leq 2.
\]
We then define the square norm
\begin{align} 
\norm_\gamma(\tau)  
&:= 
\sup_{\tau' \in[0,\tau]}
	\sum_{p=0}^4\| \g_{\tau}^{2p} \upeta \|_{H^{4-p}(\mathfrak{M}_{\tau'})}^2 
		 + 
\sup_{\tau' \in[0,\tau]}
	\sum_{p=0}^3 \|\mathring{F}\g_{\tau}^{ 2p}(\mathscr{J}^{-2}) \|_{H^{4-p}(\mathfrak{M}_{\tau'})}^2 
	\notag  \\
& \ \
	+ 
	\sup_{\tau' \in[0,\tau]}
	\sum_{p=0}^4
	\left[
		\|\mathring{F} \g_{\tau}^{2p} \AngMD^{4-p} D \upeta \|_{L^2(\mathfrak{M}_{\tau'})}^2
		+ 
		\left\|
			\sqrt{\mathring{F}} \g_{\tau}^{2p+1} \AngMD^{4-p} \upeta 
		\right\|_{L^2(\mathfrak{M}_{\tau'})}^2
	\right]
\notag \\
& \ \ + \sum_{p=0}^{p_0}\sup_{\tau' \in[0,\tau]}\|\sqrt{\mathring F}^{1+\frac1{\gamma-1}-p}\g_\tau^{8+p_0-p}D\upeta\|_{L^2(\mathfrak{M}_{\tau'})}^2.
\label{E:NORMGAMMA}
\end{align}
Notice that the number of $\tau$ derivative gets larger as $\gamma>1$ approaches $1$.

In Lagrangian coordinates, problem~\eqref{E:EULER}--\eqref{E:normalizationeulerian} takes the following form:
\begin{align} \label{E:INITIALFUNCTTIONGAMMA}
	f \mathscr{J}
	& = \mathring{F},
		\\
	\mathring{F} \Enth \partial_{\tau} v_{\mu}
	+
	\partial_K
	\left\lbrace
		\mathring{F}^\gamma a^K_{\mu} \mathscr{J}^{-\gamma} \Enth
	\right\rbrace
	+ \mathring{F}^\gamma v_{\mu} \partial_{\tau}\left( \mathscr{J}^{-\gamma}\Enth\right) 
	& = 0,
	 \label{E:LAGRNAGIANMAINEVOLUTIONEQUATIONGAMMA} \\
g_{\alpha\beta}v^{\alpha}v^{\beta} 
& = -1,
\label{E:NORMALIZATIONLAGRANGIANGAMMA}
\end{align}
where 
\begin{align}
	\mathring{F}
	& :=  \mathring{n}  \mathring{v}^0,
		\label{E:MATHRINGFDEFINITIONGAMMA}
\end{align}
and $S=s\circ\upeta,$ with $s$ given by~\eqref{E:ENTHALPYSPECIAL0}.

With respect to the norm~\eqref{E:NORMGAMMA}, 
one can obtain the following analog of Theorem~\ref{T:MAIN}.
\begin{theorem}[A priori estimates in Sobolev spaces when $\gamma>1$]\label{T:MAINGAMMA}
Let the initial particle number density $\mathring n\in H^4(\mathfrak{M})$ satisfy the physical vacuum boundary condition~\eqref{E:PHYSICALVACUUM}
and the condition~\eqref{E:SMALLDENSITY}. 
If $\upeta^\mu$ is a  smooth solution to~\eqref{E:INITIALFUNCTTIONGAMMA}-~\eqref{E:NORMALIZATIONLAGRANGIANGAMMA}, 
then there exists a time $T=T(\norm_\gamma(0))$ and a constant $C$ such that 
\begin{align}
\norm_\gamma(\tau) \le C \norm_\gamma(0), \ \ \tau \in [0,T].
\end{align}
\end{theorem}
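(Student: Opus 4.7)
The plan is to extend the three-step scheme of Sections \ref{S:VORTICITY}--\ref{S:ELLIPTIC} (vorticity, energy, elliptic) to the general equation of state $p(\rho)=\rho^\gamma$ with $\gamma>1$, tracking the $\gamma$-dependent modifications forced by \eqref{E:LAGRNAGIANMAINEVOLUTIONEQUATIONGAMMA} and the enthalpy formula \eqref{E:ENTHALPYSPECIAL0}. The identity $\Lagvort(\partial_\tau(Sv))=0$ derived from \eqref{E:LAGRANGIANENTHALPYEVOLUTION} is independent of the equation of state, so the vorticity analysis of Proposition \ref{P:VORTICITYESTIMATES} transfers to the present setting once one replaces the formula $\partial_\tau S = -2S^{3/2}\mathscr{J}^{-2}\mathring F\,\partial_\tau \mathscr{J}$ by its $\gamma$-analog $\partial_\tau S = -\gamma S^{(2\gamma-1)/\gamma}\mathscr{J}^{-2}\mathring F\,\partial_\tau \mathscr{J}$ arising from \eqref{E:ENTHALPYSPECIAL0}.

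For the energy estimates I would define an analog $\mathcal{E}^{(\gamma)}$ of Definition \ref{D:ENERGY} in which the gradient-type integrand carries the weight $\mathring F^\gamma\mathscr{J}^{1-\gamma}S$ and the Jacobian-type integrand carries the weight $\mathring F^\gamma\mathscr{J}^{-\gamma-1}S\tfrac{1+f}{1-f}$. Applying $\partial_\tau^{2p}\AngMD^{4-p}$ to \eqref{E:LAGRNAGIANMAINEVOLUTIONEQUATIONGAMMA} and contracting against $g_{\mu\nu}\partial_\tau^{2p}\AngMD^{4-p}v^\nu$, the integration-by-parts scheme of Proposition \ref{P:ENERGYLEMMA} goes through essentially verbatim, producing a $\gamma$-analog of \eqref{E:ENERGYIDENTITY1} whose error terms are handled by the same combination of Young's inequality, the weighted embedding \eqref{E:WEIGHTEDEMBEDDING}, and the independently obtained vorticity bounds. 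For the elliptic step, the $\gamma$-analog of \eqref{E:LAGRANGIANENTHALPYEVOLUTIONNEEDEDFORELLIPTIC}, obtained by substituting $f=\mathring F\mathscr{J}^{-1}$ into \eqref{E:LAGRANGIANENTHALPYEVOLUTION} and expressing $\partial_K S$ in terms of $\partial_K f$ via \eqref{E:ENTHALPYSPECIAL0}, provides an algebraic expression for $\partial_3(\mathscr{J}^{-\gamma})$ suitable for the integration-by-parts argument of Proposition \ref{P:HIGHERPARTIAL3DERIVATIVESVIAELLIPTIC}; the physical vacuum condition \eqref{E:PHYSICALVACUUM}, which now reads $\partial_3(\mathring n^{\gamma-1})<0$ near $\{y^3=0\}$, continues to supply the essential positivity.

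The genuinely new feature is the supplementary sum in \eqref{E:NORMGAMMA} carrying the weight $\sqrt{\mathring F}^{1+\frac{1}{\gamma-1}-p}$ and $8+p_0-p$ time derivatives. Since \eqref{E:VACUUMBOUNDARY} gives $\mathring F\sim d^{1/(\gamma-1)}$, the factor $\mathring F^{1/(\gamma-1)}$ hidden inside the enthalpy and inside products of cofactor entries becomes arbitrarily singular as $\gamma\to 1^+$, and recovering unweighted Sobolev control of top-order quantities requires iterating the weighted embedding \eqref{E:WEIGHTEDEMBEDDING} a number of times proportional to $\tfrac{1}{\gamma-1}$. The integer $p_0$ is precisely the smallest number of such iterations needed so that the residual weight exponent is at most $2$, thereby returning us to a setting covered by Lemma \ref{L:WEIGHTEDEMBEDDING}. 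I would close the supplementary bounds by induction on the number of $\partial_\tau$-derivatives, using at each step the $\gamma$-analog elliptic-type identity for $\partial_3(\mathscr{J}^{-\gamma})$ and the positivity $-\partial_3\mathring F\geq c>0$ near the boundary.

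The main obstacle is the book-keeping of weight exponents in the Leibniz expansion of the energy identity: each spatial derivative of $\mathring F$ must be absorbed via a weighted Hardy-type bound in the spirit of the $\gamma$-analog of \eqref{E:DENSITYHARDY}, obtained by repeated application of Lemma \ref{L:HARDY} with the distance function $d\sim \mathring n^{\gamma-1}$, and the inductive scheme on $p=0,\ldots,p_0$ for the supplementary norm must be arranged so that each new estimate depends only on quantities already controlled earlier in the induction, mirroring the cascade $(3,0,0)\to(2,1,0)\to(2,0,1)\to\cdots$ of Proposition \ref{P:HIGHERPARTIAL3DERIVATIVESVIAELLIPTIC}. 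Once these building blocks are assembled into a polynomial inequality $\norm_\gamma(\tau)\leq\mathring M+\tau P(\norm_\gamma(\tau))$, Theorem \ref{T:MAINGAMMA} follows by the same standard continuity argument as at the end of Section \ref{S:PROOFOFMAINTHEOREM}.
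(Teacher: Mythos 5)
Your proposal follows the paper's own (very brief) argument for Theorem~\ref{T:MAINGAMMA}: the paper simply remarks that the methodology of Theorem~\ref{T:MAIN} carries over, that $\mathring{n}^{\gamma-1}$ now plays the role of the distance function, and that the extra $\tau$-derivatives in $\norm_\gamma$ compensate for the weaker weighted embedding when $\gamma<2$ — precisely the points you flesh out. One computational slip: from \eqref{E:ENTHALPYSPECIAL0}, \eqref{E:INITIALFUNCTTIONGAMMA} one gets $\partial_\tau S = -\gamma S^{(2\gamma-1)/\gamma}\,\mathring{F}^{\gamma-1}\mathscr{J}^{-\gamma}\,\partial_\tau\mathscr{J}$, not $-\gamma S^{(2\gamma-1)/\gamma}\,\mathring{F}\,\mathscr{J}^{-2}\,\partial_\tau\mathscr{J}$; the two agree only at $\gamma=2$, and the correct $\mathring{F}^{\gamma-1}$ (matching the $\mathring{F}^\gamma$ appearing in \eqref{E:LAGRNAGIANMAINEVOLUTIONEQUATIONGAMMA}) is what drives the $\gamma$-dependent weight bookkeeping you describe.
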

The proof of Theorem~\ref{T:MAINGAMMA} follows the same methodology as the proof of Theorem~\ref{T:MAIN}. 
Notice that in this case, 
$\mathring{n}^{\gamma-1}(x)$ behaves like the distance function $d(\cdot,\partial\mathfrak{M})$ in the vicinity 
of the boundary $\partial \mathfrak{M}$. 
The need for a higher number of $\tau$-derivatives when $\gamma<2$ therefore arises from the weighted 
Sobolev embedding~\eqref{E:WEIGHTEDEMBEDDING} when $k<1,$ as it leads to Sobolev spaces with less regularity.

\section*{Acknowledgements} 
S.S. was supported by the National Science Foundation under grant no. DMS-1301380 and by a Royal
Society Wolfson Merit Award.
J.S. was supported by NSF grant no. DMS-1162211,
NSF CAREER grant no. DMS-1454419,
a Sloan Research Fellowship provided by the Alfred P. Sloan foundation,
and from a Solomon Buchsbaum grant administered by the Massachusetts Institute of Technology.

\end{document}